\def\R{{\hbox{\bf R}}}
\def\C{{\hbox{\bf C}}}
\def\P{{\hbox{\bf P}}}
\font \roman = cmr10 at 10 true pt
\def\ep{{\epsilon}}
\def\emph#1{{\it #1}}
\def\textbf#1{{\bf #1}}
\def\rank{{\hbox{\roman rank}}}
\def\CQ{{\mathcal Q}}
\theoremstyle{plain}
\newtheorem{theorem}[subsection]{Theorem}
\newtheorem{conjecture}[subsection]{Conjecture}
\newtheorem{lemma}[subsection]{Lemma}
\newtheorem{corollary}[subsection]{Corollary}
 \theoremstyle{remark}
\newtheorem{remark}[subsection]{Remark}
\theoremstyle{definition}
\newtheorem{definition}[subsection]{Definition}
\begin{document}

\title{On the rank of random sparse matrices}

\author{Kevin P. Costello}
\address{Department of Mathematics, Institute for Advanced Study, Princeton, NJ 08540}
\email{kcostell@math.ias.edu}
\thanks{Kevin Costello is supported by NSF Grant DMS-0635607}
\author{Van  Vu}
\address{Department of Mathematics, Rutgers, Piscataway, NJ 08854}
\email{vanvu@math.rutgers.edu}
\thanks{Van Vu is partially supported by  NSF Career Grant 0635606 and the Simonyi Foundation}

\begin{abstract}
We investigate the rank of random (symmetric) sparse matrices. Our
main finding  is that with high probability, any dependency that
occurs in such  a matrix is formed by  a set of few rows that
contains an overwhelming number of zeros. This allows us to obtain
an exact estimate for the co-rank.

\end{abstract}
\maketitle

\section{Introduction and Statement of Main Results}

Let
 $w_{ij}, 1\le i \le j \le n$ be complex
numbers, where the $w_{ij}$ are non-zero for $i<j$
 (we place no restriction on the $w_{ii}$). We denote by
 $W$ the symmetric matrix where

 \begin{itemize}

 \item The upper diagonal entries are
 $w_{ij}, 1 \le i < j\le n$.

 \item The diagonal entries are $w_{ii}$.

 \item $w_{ji} = w_{ij}$ for all $1 \le i < j \le n$.

 \end{itemize}

 Let $p$ (which may depend on $n$)
 be a positive number at most one and  $\xi_{ij}$, $1\le i \le j \le n$ be
  independent Bernoulli random variables taking on value 1 with
  probability $p$ and 0 with probability $1-p$.

We sparsify the  matrix $W$  by
  replacing each entry above or on the diagonal by zero with probability
  $1-p$ while keeping the matrix symmetric. The resulting
  symmetric matrix, denoted by $Q:= Q(W,p)$ has entries $q_{ij}=
  w_{ij}\xi_{ij}, 1\le i \le j \le n$ and $q_{ji}= q_{ij}$.

$Q$ is a very general model of random matrices and the study of its
linear algebraic parameters is of considerable interest. In this
paper, we will focus on the rank of $Q$. This seemingly simple
parameter has not been understood until very recently, when in
\cite{CTV} T. Tao and the authors proved (in a slightly less general
form) that if $0<p<1$ is a constant not depending on $n$, then $Q$
almost surely has full rank, thus proving a conjecture of Weiss.
 A refinement of this result in \cite{CV} showed that the same
 conclusion still holds as far as $p > \frac{(1+ \epsilon) \ln n}{n}$. This
 bound is sharp, as for $p < \frac{(1-\epsilon) \ln n}{n}$, by the coupon-collector theorem
 there will be an all-zero row with probability tending to 1 as $n$ tends to
 infinity.

In the current paper, we investigate the case when $p$ can be
significantly smaller than $\frac{\ln n}{n}$. In this case, the
co-rank of the matrix is almost surely positive (in fact, polynomial
in $n$) and this makes the situation quite different from the
full-rank case considered before.

The main intuition that underlines  many problems concerning the
rank of a random matrix is that

\vskip2mm

\centerline {\it Dependency should come from small configurations.}

\vskip2mm

 The most famous question based on this intuition is the
$(1/2)^{n}$ conjecture, which asserts that the probability that a
random Bernoulli matrix  of order $n$ (non-symmetric matrix with
entries being iid Bernoulli random variables, taking values $1$ and
$-1$ with probability $1/2$) is singular is $(1/2+o(1))^{n}.$ The
lower bound is trivial, as it comes from the probability that there
are two equal rows. Despite several efforts (see \cite{Vusur} for a
recent survey), this conjecture is still open. Here and later, the
asymptotic notation will be used under the assumption that $n$, the
size of the matrix, tends to infinity.

 In this paper, we are able, for the first time, verify this
 intuition rigorously in a sufficiently general setting.
 We will show that for a certain range of $p$, any dependency that occurs in the random
 matrix indeed comes from a small configuration. To be more precise,
we will prove that if $p=\Omega(\frac{\ln n}{n})$, then any dependency
comes from a collection of at most $k$ rows, for some $k=O(1)$,
which between them have at most $k-1$ non-zero columns. In particular, it will
follow that the co-rank of $Q$ depends only on the local structure
of an underlying graph formed by the $\xi_{ij}$, regardless of the
values of the $w_{ij}$. (It is, however, critical that $w_{ij} \neq 0$ for $i \neq j$.)

In order to state our results formally, we will need few
definitions:

\begin{definition}
The graph $G(Q)$ of a symmetric matrix $Q$ is the graph with
vertices being the rows of $Q$ and  vertex $i$ is connected to
 vertex $j$ if and only if $q_{ij} \neq 0$.
\end{definition}

Note that this graph may contain self-loops if the entries on the
main diagonal of $Q$ are nonzero.  Furthermore, except for these
loops the graph of $Q(W,p)$ is independent of $W$ due to our
assumption that $w_{ij} \neq 0$.

 For a set $S$ in a graph $G$, let $N(S)$ denote the neighborhood of $S$, that is, the set of vertices
adjacent to $S$. (We allow $N(S)$ to contain elements of $S$.) A set
$S$ of vertices of a graph $G$ is \textbf{non-expanding} if it
satisfies $|N(S)|<|S|$.  $S$ is \textbf{minimally non-expanding} if
it contains no proper non-expanding subset.

{\it Examples.} An isolated vertex forms a non-expanding set. A set
of two vertices of degree one sharing a common neighbor forms a
minimal  non-expanding set of size 2.  However, a single vertex with
a self-loop does not form a non-expanding set (as it is its own
neighborhood).

A set $T$ of rows of a matrix $Q$ is non-expanding if at most
$|T|-1$ columns of $Q$ have at least one nonzero entry in $T$. Note
that a set of rows of $Q$ is non-expanding if and only if the
corresponding set of vertices in $G(Q)$ is non-expanding.
Furthermore, if $S$ is a non-expanding subset of $G(Q)$, then the
corresponding rows in $Q$ are dependent (regardless of $W$), since
they span a space of dimension at most $|N(S)|$. It is thus clear
that for any set $S$ of vertices of $G(Q)$ and any weights $W$ that

$$\rank (Q) \le n -|S| + |N(S)|. $$

By considering all sets of vertices in $G(Q)$ we immediately obtain the upper bound

$$\rank (Q) \le \min_{S \subset V(G(Q))} n - |S|  + |N(S)|. $$

Our first main result shows that for $p$ which is sufficiently large, this upper bound is {\it tight}.

\begin{theorem} \label{thm:altexpression}
Assume that $p = \Omega (\frac{\ln n}{n})$. Let $W$ be any fixed
symmetric matrix whose off-diagonal entries are nonzero. Then almost
surely

\begin{equation} \label{eqn:rankbound}
\rank (Q(W, p))=\min_{S \subset V(G(Q))} (n-|S|+|N(S)|).
\end{equation}
\end{theorem}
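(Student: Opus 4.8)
The inequality $\rank(Q) \le \min_{S} (n - |S| + |N(S)|)$ is already established in the excerpt, so the content of the theorem is the reverse inequality: almost surely there is a \emph{witness} set $S$ attaining the minimum whose deficiency $|S| - |N(S)|$ equals the true co-rank, and moreover every linear dependency among the rows is ``explained'' by such a set. The plan is to show that almost surely the only source of co-rank in $Q$ is the presence of non-expanding sets in $G(Q)$, and that these non-expanding sets are all small (of size $O(1)$) and locally structured. Concretely, I would prove: (i) with high probability every minimally non-expanding set in $G(Q)$ has size at most some constant $k$; (ii) with high probability, if one deletes from $Q$ the rows and columns of a maximal disjoint collection of such small non-expanding sets, the resulting matrix has full rank; combining (i) and (ii) with the trivial bound then forces equality in \eqref{eqn:rankbound}.

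For step (i) the key is a first-moment / union-bound estimate over the underlying random graph $G = G(Q)$, which for $p = \Omega(\ln n / n)$ is an Erd\H{o}s--R\'enyi-type graph above the connectivity threshold (with extra diagonal loops that only help). A set $S$ of size $s$ with $|N(S)| < s$ must have at most $s-1$ vertices receiving \emph{any} edge from $S$; bounding the number of such configurations by $\binom{n}{s}\binom{n}{s-1}$ and the probability that $S$ sends all its edges into a fixed $(s-1)$-set by roughly $(1-p)^{s(n-s+1)}$ (up to the interior edges, which we can absorb), one gets that the expected number of non-expanding sets of size between $k+1$ and, say, $n/2$ tends to $0$ once $k$ is a large enough constant depending on the implied constant in $p = \Omega(\ln n/n)$; the range $s > n/2$ is handled separately since such a set would force a very large independent-neighborhood deficiency that is ruled out by standard expansion of dense random graphs. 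This shows all minimally non-expanding sets have size $O(1)$.

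For step (ii) I would use the symmetric rank machinery: after removing a maximal collection $\mathcal{C}$ of vertex-disjoint minimally non-expanding sets (there are $O(n)$ of them, but their union has size $O(n)$ as well, and more to the point they are ``spread out''), I claim the principal submatrix $Q'$ on the remaining rows/columns is nonsingular a.s. This is exactly the type of statement proved in \cite{CV} for the full-rank regime, applied to the induced graph, which no longer contains small non-expanding sets and whose degree distribution still satisfies the hypotheses; one runs the exposure/elimination argument row by row, using that $w_{ij}\neq 0$ to ensure each newly exposed row is, conditionally, not in the span of the previously processed rows except on an event of small probability, and sums these up. Finally, one checks that $\rank(Q) \ge \rank(Q') = n - |\bigcup \mathcal{C}|$ and that $|\bigcup\mathcal{C}| = \sum_{S \in \mathcal{C}}(|S| - |N(S)|)$ matches the co-rank lower bound coming from the disjoint union $\bigcup \mathcal{C}$ as a single non-expanding set, so the bound \eqref{eqn:rankbound} is tight.

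The main obstacle I expect is step (ii): controlling the rank of the \emph{conditioned} matrix $Q'$ after the removal of $\mathcal{C}$. The difficulty is that the removal is data-dependent (it depends on the random graph), so one cannot treat $Q'$ as a fresh independent sparse matrix; one must argue that conditioning on ``$G$ has no small non-expanding set other than those in $\mathcal{C}$, and these are well-separated'' does not destroy the independence needed to run the quadratic-form/Littlewood--Offord step that drives the full-rank proof. I would handle this by exposing the entries in a careful order and absorbing the conditioning into negligible-probability exceptional events, leaning on the robustness of the argument in \cite{CV}; the surrounding combinatorial estimates in step (i) are, by contrast, routine union bounds.
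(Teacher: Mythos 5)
Your high-level picture is the right one, and step (i) (all minimal non-expanding sets have size $O(1)$, via union bound over the random graph) is essentially what the paper proves as part of its ``well-separated'' and ``good'' graph lemmas. The gap is in step (ii), and it is fatal as written. If $Q'$ is the principal submatrix on $V \setminus \bigcup\mathcal{C}$ and you show $Q'$ is nonsingular, you get $\rank(Q) \ge \rank(Q') = n - \bigl|\bigcup\mathcal{C}\bigr|$. But the trivial upper bound at $S = \bigcup\mathcal{C}$ is $\rank(Q) \le n - \bigl|\bigcup\mathcal{C}\bigr| + \bigl|N(\bigcup\mathcal{C})\bigr|$, which exceeds your lower bound by $\bigl|N(\bigcup\mathcal{C})\bigr|$; these match only in the degenerate case where every non-expanding set is a set of isolated vertices. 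The identity you invoke to close this, $\bigl|\bigcup\mathcal{C}\bigr| = \sum_{S\in\mathcal{C}}(|S|-|N(S)|)$, is false (for disjoint $S$ the left side is $\sum_S |S|$). There is a second, independent problem: after deleting $\bigcup\mathcal{C}$, vertices in $N(\bigcup\mathcal{C})$ can lose all or most of their neighbors and become isolated or low-degree in the induced graph, so nonsingularity of $Q'$ does not follow from the full-rank results of \cite{CV} applied to the remainder, and in fact can fail deterministically.

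The paper's argument contains two ideas you do not have, and both are needed to repair this. First, it works with $T$ equal to the union of all minimal non-expanding sets of size at most $s-1$, and uses the well-separation condition (which forces $T\cup N(T)$ to induce a forest) to prove that $N(T)\cap T = \emptyset$ and that there is a subset $T_1\subset T$ with $|T_1|=|N(T)|$ and a \emph{unique} perfect matching between $T_1$ and $N(T)$ (Lemmas \ref{lemma:Tcycles} and \ref{lemma:matching}). In the block decomposition, this matching makes the $T_1$-block row contribute $|N(T)|$ independent rows on top of the $G\setminus(T\cup N(T))$ block, giving the full $\rank(Q)\ge n-|T|+|N(T)|$, which now meets the trivial upper bound at $S=T$. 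Second, rather than conditioning on the locations of the non-expanding sets and trying to run a full-rank argument on a conditioned submatrix (the obstacle you flag yourself), the paper first proves the $(s-1)$-saturation statement (Theorem \ref{thm:mainresult}) by an exposure process: it tracks the difference between the rank of $Q_m$ and the size of the largest $(s-1)$-unobstructed set of $G_m$ as rows/columns are added, using the Littlewood--Offord machinery on the freshly exposed entries. That argument never conditions on the global structure of $\mathcal{C}$, so the independence needed for the concentration step is preserved. Once $(s-1)$-saturation and goodness are in hand, the rank identity is a deterministic consequence of the block/matching analysis.
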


In other words, the rank of $Q$ is determined, almost surely, by the
local structure of $G(Q)$, which in turn depends only on the $\xi$
and which diagonal entries of $W$ are nonzero (actually, which
diagonal entries are nonzero will also turn out to be with high
probability irrelevant to the rank of $Q$--see Remark
\ref{remark:smallsets}).


 Theorem \ref{thm:altexpression}, together with earlier results from \cite{CTV,
 CV}, imply the following corollary:

\begin{corollary} \label{cor:randomw}
 Let $1/2>p=\Omega(\frac{\ln n}{n})$, and let $Q$ be a random symmetric
 matrix whose above diagonal entries are independent
  random variables $\xi_{ij}$ satisfying
\begin{equation*}
\P(\xi_{ij}=0)=1-p
\end{equation*}
and whose off-diagonal entries are 0 with probability at least $1-p$.

Then almost surely
\begin{equation} \label{eqn:genrankbound}
\rank (Q) =\min_{S \subset V(G(Q))} n-|S|+|N(S)|)
\end{equation}
\end{corollary}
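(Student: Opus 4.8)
\emph{Proof sketch.} First I would dispose of the easy direction: the bound $\rank(Q)\le \min_{S}(n-|S|+|N(S)|)$ is deterministic --- as explained above, it uses only that every non-expanding set of rows is linearly dependent, and this holds for \emph{any} values of the (nonzero) entries --- so the whole content of the corollary is the matching lower bound
\[
\rank(Q)\ \ge\ \min_{S\subset V(G(Q))}\bigl(n-|S|+|N(S)|\bigr)\qquad\text{with probability }1-o(1).
\]

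The plan is to separate the randomness in the \emph{support} of $Q$ from the randomness in its \emph{values}, and then to condition on the values so as to land exactly in the setting of Theorem \ref{thm:altexpression}. Concretely, for $i<j$ I would set $\eta_{ij}:=\mathbf{1}[q_{ij}\neq 0]$, a Bernoulli($p$) variable, and let $v_{ij}$ equal $q_{ij}$ on the event $\{q_{ij}\neq 0\}$ and, on the complementary event, an independent sample from the conditional law of $q_{ij}$ given $\{q_{ij}\neq 0\}$; then $q_{ij}=v_{ij}\eta_{ij}$, each $v_{ij}$ is almost surely nonzero, $v_{ij}$ is independent of $\eta_{ij}$, and --- since the original entries are independent --- the family $(v_{ij})_{i<j}$ is independent of the family $(\eta_{ij})_{i<j}$. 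I would then condition on the realization of $(v_{ij})_{i<j}$: together with an arbitrary fixed diagonal this gives a \emph{fixed} symmetric matrix $W$ with nonzero off-diagonal entries, and conditionally on the $v$'s the off-diagonal part of $Q$ is distributed exactly as that of $Q(W,p)$. Since by Remark \ref{remark:smallsets} the pattern of nonzero diagonal entries has, with probability $1-o(1)$, no effect on the rank of $Q$ or on the right-hand side of \eqref{eqn:genrankbound}, one may replace the diagonal of $Q$ by that of $Q(W,p)$; Theorem \ref{thm:altexpression} then yields $\rank(Q)=\min_{S}(n-|S|+|N(S)|)$ almost surely on this fiber, and integrating over the value family finishes the proof. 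In the range $p>(1+\epsilon)\ln n/n$, where almost surely $G(Q)$ has no non-trivial non-expanding set and the right-hand side of \eqref{eqn:genrankbound} equals $n$, the claim reduces to $Q$ having full rank, which can instead be read off, after the same conditioning, from the full-rank theorems of \cite{CTV,CV}; together with Theorem \ref{thm:altexpression} these cover the whole range $\Omega(\ln n/n)\le p<1/2$.

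I expect the main obstacle to be making this decoupling airtight. The re-randomization of $v_{ij}$ off the support of $q_{ij}$ is precisely what renders the support pattern of $Q$ an honest random graph of edge probability $p$ that is \emph{independent} of the conditioned value family, which is what allows Theorem \ref{thm:altexpression} --- established only for a fixed $W$ with $w_{ij}\neq 0$ --- to be quoted verbatim on each fiber; one has to check that no correlation between values and support has been smuggled back in through the joint law. Granting that, the two remaining points --- that the nonzero-diagonal pattern is irrelevant, and that the dense regime is covered by \cite{CTV,CV} --- should follow routinely from the results already cited.
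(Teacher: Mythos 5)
Your decoupling is the right idea and is what the brief remark in the paper ("Theorem~\ref{thm:altexpression}, together with earlier results\dots") is gesturing at: resample the off-diagonal entries conditionally off their support to factor each $q_{ij}$ ($i<j$) as $v_{ij}\eta_{ij}$ with $(v_{ij})$ independent of $(\eta_{ij})$, condition on the realized nonzero values $(v_{ij})$, and invoke Theorem~\ref{thm:altexpression} for the resulting fixed $W$. The check that $(v_{ij})$ and $(\eta_{ij})$ are jointly (not merely pairwise) independent is correct, and the integration over fibers is fine because the error probability in Theorem~\ref{thm:altexpression} (via Theorem~\ref{thm:mainresult}) is uniform in $W$. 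Your separate appeal to \cite{CTV,CV} for larger $p$ is unnecessary — Theorem~\ref{thm:altexpression} already covers the whole range $\Omega(\ln n/n)\le p<1/2$ — but it does no harm.

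The one place you are glossing over is the diagonal, and I think the way you lean on Remark~\ref{remark:smallsets} is not quite enough as stated. That remark compares $Q(W,p)$ with $Q(W',p)$ — two matrices whose diagonal entries are of the form $w_{ii}\xi_{ii}$ with the \emph{same} Bernoulli$(p)$ indicators — and concludes (via Theorem~\ref{thm:betterexpression}, which is again a statement about $Q(W,p)$) that they have the same dependent row sets. Your $Q$ has diagonal entries whose law only satisfies $\P(q_{ii}=0)\ge 1-p$, which is not of the form $w_{ii}\xi_{ii}$ for any fixed $w_{ii}$; and the step "diagonal is irrelevant to the rank of $Q$" needs Theorem~\ref{thm:betterexpression} (or the saturation statement Theorem~\ref{thm:mainresult}) to hold \emph{for $Q$ itself}, which is precisely what you have not yet established — so a literal reading of your argument is circular at this point. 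The clean fix is to condition on the diagonal of $Q$ as well as on the off-diagonal value family $(v_{ij})$, and then observe that the proof of Theorem~\ref{thm:mainresult} never exploits any randomness in the diagonal: in Lemma~\ref{lemma:singaug} the new diagonal entry does not appear at all, and in Lemma~\ref{lemma:nonsingaug} the new diagonal entry contributes only an additive constant $q_{m+1,m+1}\det C$ to the determinant expansion, which Lemma~\ref{lemma:modifiedquadraticL-O} absorbs (it bounds $\P(\sum a_{ij}z_iz_j=c)$ for \emph{any} fixed $c$). The only place the diagonal's distribution is used is in the graph lemmas through the self-loop clause of $W2$, and there the bound $\P(q_{ii}\neq 0)\le p$ is all that is needed. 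With that observation, the conditioned-on-diagonal version of Theorem~\ref{thm:altexpression} holds, and your integration argument goes through.
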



The $\xi_{ij}$ here are not necessarily identical or Bernoulli.

\vskip2mm

We next turn to the question of classifying the dependency in
$Q(W,p)$. It is a routine to check  that for any fixed positive
integer $s$ and $\ep>0$, if $p<\frac{(1-\ep) \ln n}{sn}$ then
$G(n,p)$ almost surely contains minimal non-expanding sets of all
sizes up to and including $s$. In $Q(W,p)$ such a set of rows is
automatically dependent, regardless of the values of the nonzero
entries.

Our second main result shows that this is the {\it only } reason for dependency.

\begin{theorem} \label{thm:betterexpression}
Let $s$ be a fixed positive integer and $c >1/s$ a positive
constant. Assume that $\frac{c \ln n}{n}<p<1/2$. Then with
probability $1-O(\frac{1}{(\ln \ln n)^{1/4}})=1-o(1)$,
 $Q(W,p)$ has the property that any set of dependent rows of
$Q(W,p)$  contains a non-expanding set of size at most $s-1$.  The
constant implicit in the $O$ notation is independent of $W$.
\end{theorem}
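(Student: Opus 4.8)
The plan is to first reformulate the statement combinatorially. Every linearly dependent set of rows contains a minimal one, i.e.\ a \emph{circuit}; a non-expanding proper subset of a circuit would be a proper dependent subset (a non-expanding set of size $\ge 2$ is dependent, and a non-expanding singleton is a zero row), which is impossible for a circuit. Hence the conclusion of Theorem \ref{thm:betterexpression} is equivalent to: with the stated probability, every circuit of $Q(W,p)$ is a minimal non-expanding set of size at most $s-1$. I would also record the elementary observation that if $T$ is a circuit, the relation $\sum_{i\in T} a_i(\text{row }i)=0$ is unique up to scaling with all $a_i\neq 0$, so a column $v$ whose unique neighbor in $T$ is $i_0$ would force $a_{i_0}w_{i_0 v}=0$ — impossible; thus in \emph{any} circuit every column of $N(T)$ meets at least two rows of $T$. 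Since a circuit has no non-expanding proper subset, it is either itself a minimal non-expanding set or satisfies $|N(T)|\ge|T|$ (call the latter an \emph{expanding circuit}). So it suffices to show, each with probability $1-O((\ln\ln n)^{-1/4})$ uniformly in $W$: (a) $G(Q)=G(n,p)$ has no minimal non-expanding set of size $\ge s$; (b) $Q(W,p)$ has no expanding circuit.

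For (a) I would use a first moment bound. A minimal non-expanding set $S$ of size $k$ is connected, spans together with $N(S)$ at most $2k-1$ vertices, uses only about $2k-2$ edges, and forbids $\Omega(kn)$ potential edges from $S$ to the outside; feeding $p\ge\frac{c\ln n}{n}$ into $\binom{n}{k}\binom{n}{k-1}(\#\text{tree-shapes})\,p^{2k-2}(1-p)^{\Omega(kn)}$ yields, for $s\le k\lesssim\log^2 n$, a bound of the form $n^{-1}(\text{polylog }n)^{k}$, whose sum over $k\ge s$ is $o(1)$ precisely because $cs>1$. For $\log^2 n\le k\le n/2$ (tree-likeness already caps $k$ at $(n+1)/2$) I would instead invoke the standard vertex-expansion of $G(n,p)$ for $p=\Omega(\log n/n)$, which rules out non-expanding sets in this range outright. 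This contributes a failure probability that is $o(1)$ — much smaller than $(\ln\ln n)^{-1/4}$ — so it is not the bottleneck.

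Part (b) I would split by size, with a slowly growing threshold $\Lambda$. For an expanding circuit of size at most $\Lambda$, the ``every $N(T)$-column meets $\ge2$ rows of $T$'' condition forces $T\cup N(T)$ to induce a subgraph that is either dense enough to be rare in $G(n,p)$, or — after exposing all relevant $\xi_{ij}$ but a few — the dependency imposes an exact linear constraint on the surviving Bernoulli bits whose probability is bounded away from $1$ by a Littlewood--Offord-type estimate using $p<1/2$; a union bound over the $(\log n)^{O(\Lambda)}$ possible shapes keeps the total $o(1)$, uniformly in $W$. The real difficulty is expanding circuits of size $>\Lambda$, where $|N(T)|$ is large and the incidence bipartite graph between $T$ and $N(T)$ is rich. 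Here I would follow the exposure-and-quadratic-Littlewood--Offord scheme of \cite{CTV,CV}: reveal the entries of $Q$ in stages; at an appropriate stage the left-kernel relation forces a pertinent minor (equivalently a coordinate of the kernel vector) to equal a fixed value, and this minor is a genuinely quadratic form in the still-hidden entries of a single row with $\Omega(m)$ active cross-terms, where $m$ is the number of degrees of freedom of that row surviving the conditioning; a quadratic anti-concentration inequality then bounds the probability of the forced coincidence by $O(m^{-1/4})$, and since one can arrange $m=\Omega(\ln\ln n)$ and pays only for the $O(\log n)$ stages (rather than union-bounding over $\binom{n}{|T|}$ sets $T$), the bad event has probability $O((\ln\ln n)^{-1/4})$.

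The main obstacle is exactly this last step: making quadratic Littlewood--Offord work in the sparse regime, where a row carries only $\Theta(\log n)$ nonzero entries, so one must both secure a genuinely \emph{quadratic} (not merely linear) anti-concentration and locate enough active variables, and must organize the staged conditioning so the per-stage errors do not compound — this is where the unusual $(\ln\ln n)^{-1/4}$ rate originates. Step (a) and the bounded-size case of (b) are routine, though the intermediate-size regime in (a) and the finite shape analysis in (b) demand some care; Theorem \ref{thm:altexpression} is not needed for this argument.
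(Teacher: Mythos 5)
Your combinatorial reformulation is sound: a minimal dependent set (circuit) $T$ has no proper dependent subset, hence no proper non-expanding subset, so it is either itself a minimal non-expanding set or an ``expanding circuit,'' and the observation that a unique linear relation on $T$ forces every column of $N(T)$ to meet at least two rows of $T$ is exactly the paper's notion of a non-nearly-nice set. Part (a) is also essentially correct, although as written the per-$k$ bound should read $n^{1-ck}(\hbox{polylog}\,n)^{O(k)}$ (not $n^{-1}(\hbox{polylog}\,n)^k$, which does not sum to $o(1)$ over $k\le\log^2 n$) — the exponent on $n$ must decay in $k$, which it does once the $(1-p)^{\Omega(kn)}$ factor is accounted for carefully.

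The genuine gap is in part (b) for large circuits, and it is precisely the place where the paper takes a different, indirect route. You propose to ``reveal entries in stages'' and apply quadratic Littlewood--Offord to a forced minor, paying ``only for the $O(\log n)$ stages rather than union-bounding over $\binom{n}{|T|}$ sets $T$.'' But there is no $O(\log n)$-stage structure available, and more importantly nothing in your sketch explains how the identity of the offending set $T$ is avoided: for a given large $T$ the Littlewood--Offord estimate gives a bound like $O((kp)^{-1/4})$, which cannot beat a union bound over the exponentially many candidate $T$. The paper circumvents this entirely by never arguing about a fixed $T$: it proves the stronger global statement that $Q$ is $(s-1)$-saturated (Theorem \ref{thm:mainresult}), via a supermartingale argument on the rank deficiency $Y_m = U_m - \rank(Q_m)$ under row-by-row augmentation from $Q_{n'}$ to $Q_n$, with the quadratic Littlewood--Offord lemma applied once per augmentation step (to the single new random row, not to a chosen set $T$); the $(\ln\ln n)^{-1/4}$ rate is the additive per-step error $O((kp)^{-1/4})$ that survives the geometrically-decaying main term. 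It then deduces Theorem \ref{thm:betterexpression} from $(s-1)$-saturation plus the ``goodness'' property of $G(Q)$ by a short counting argument (Section 9): a minimally dependent $R$ with no small non-expanding subset must have $|R|>\ln\ln n/p$, but then, taking a maximal independent $(s-1)$-unobstructed $S_0\subset R$ and substituting, one forces $\Omega(\ln\ln n/(sp))$ vertices of $R\setminus S_0$ to have degree at most $s-2$, contradicting goodness. Without either this saturation/goodness deduction or an alternative device to collapse the union over $T$, your plan for large circuits does not go through. Secondary concerns: the union bound ``over $(\log n)^{O(\Lambda)}$ possible shapes'' for small expanding circuits omits the $\binom{n}{O(\Lambda)}$ vertex placements and would need the density/forest constraints (the paper's well-separation W2 and local sparsity) to kill that factor.
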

Note that in particular the $s=1$ case of this result gives that $Q(W,p)$ is almost surely nonsingular for $p$ sufficiently large.  This particular case can also be extended to non-symmetric models (see Section 11).   Again, this result can easily be extended to a result for random $W$ in the vein of \ref{cor:randomw}.

The special cases $s=1,2$ of this result were proved in an earlier
paper \cite{CV} for $Q=Q(n,p)$. This theorem implies that  if
$\frac{c \ln n}{n}<p<1/2$, where $c>1/s$ for some positive integer
$s$, then any minimal non-expanding set has size at most $s-1$. It
also implies the following corollary.

\begin{definition}
A set $S$ of vertices of a graph $G$ is \textbf{s-unobstructed} if
it contains no non-expanding subset of size at most $s$. $S$ is
\textbf{unobstructed} if it contains no non-expanding subset.
\end{definition}

\begin{corollary}
With probability $1-O(\frac{1}{(\ln \ln n)^{1/4}})=1-o(1)$, the rank of the random matrix $Q(W,p)$
 equals the size of its largest unobstructed set in its graph.
\end{corollary}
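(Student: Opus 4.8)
The plan is to obtain the corollary by combining Theorem~\ref{thm:betterexpression} with a single deterministic observation already recorded above: any non-expanding set of rows is automatically linearly dependent, since it spans a subspace of dimension at most $|N(S)| \le |S|-1$. Write $u = u(Q)$ for the size of the largest unobstructed set of $G(Q)$. I would prove the two inequalities $\rank(Q) \le u$ and $\rank(Q) \ge u$ separately; only the second needs the probabilistic input.

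For $\rank(Q) \le u$, which holds for every realization and every admissible $W$: choose a set $R$ of $\rank(Q)$ linearly independent rows. Every subset of $R$ is again linearly independent, hence not dependent, hence --- by the observation above --- not non-expanding. Thus $R$ is an unobstructed set of $G(Q)$, so $u \ge |R| = \rank(Q)$.

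For $\rank(Q) \ge u$ I would apply Theorem~\ref{thm:betterexpression}. For $p$ in the range considered there, fix an integer $s$ and a constant $c > 1/s$ with $\frac{c\ln n}{n} < p < 1/2$; then with probability $1 - O(\frac{1}{(\ln\ln n)^{1/4}})$ every dependent set of rows of $Q(W,p)$ contains a non-expanding set of size at most $s-1$. On that event, let $S$ be a largest unobstructed set, so $|S| = u$. By definition $S$ contains no non-expanding subset of any size, in particular none of size at most $s-1$, so $S$ is not a dependent set of rows; equivalently the rows indexed by $S$ are linearly independent, and hence $\rank(Q) \ge |S| = u$. The two inequalities together give $\rank(Q) = u$ with the stated probability.

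Neither step presents a real obstacle; the points deserving attention are merely (i) that $s$ and $c$ can be chosen admissibly for the given $p$, and (ii) keeping the logical direction straight --- ``non-expanding $\Rightarrow$ dependent'' is trivial, whereas its converse ``unobstructed $\Rightarrow$ linearly independent'' is exactly the nontrivial statement supplied by Theorem~\ref{thm:betterexpression}, and is where the failure probability enters. I would also remark that, by the deficiency form of Hall's theorem, $u = \min_{S \subset V(G(Q))}(n-|S|+|N(S)|)$ holds deterministically (a largest unobstructed set is exactly a set of rows matchable into the columns carrying their nonzero entries), so running the same argument with Theorem~\ref{thm:altexpression} in place of Theorem~\ref{thm:betterexpression} upgrades the conclusion to $\rank(Q) = u$ \emph{almost surely} for every $p = \Omega(\frac{\ln n}{n})$.
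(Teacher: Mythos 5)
Your derivation is correct, and it makes explicit what the paper leaves implicit (the paper simply asserts that Theorem~\ref{thm:betterexpression} "also implies" the corollary): the inequality $\rank(Q)\le u$ is deterministic because any maximal linearly independent set of rows is automatically unobstructed, while $\rank(Q)\ge u$ holds on the good event of Theorem~\ref{thm:betterexpression} since a largest unobstructed set contains no non-expanding subset of size $\le s-1$ and is therefore not a dependent set. Your closing observation that $u=\min_{S}(n-|S|+|N(S)|)$ by the defect form of Hall's theorem is also accurate, and it is a nice way of making transparent that this corollary and Theorem~\ref{thm:altexpression} are the same statement in two guises.
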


The proofs of the main theorems combine techniques developed in
earlier papers \cite{CTV,CV} with some delicate properties of sparse
random graphs. We are going to sketch the main ideas in the next
section.

\section{The Idea of the proofs and some Lemmas}
Instead of proving Theorems  \ref{thm:altexpression} and \ref{thm:betterexpression}
directly, we are going to prove the following theorem (which is somewhat weaker
than Theorem \ref{thm:betterexpression}). The proof of this theorem,
combined with some lemmas will imply Theorems
 \ref{thm:altexpression} and \ref{thm:betterexpression}.

 We say that a matrix $Q$ is $s$-{\bf saturated} if the rank of $Q$
 equals the size of the largest $s$-unobstructed set of $G(Q)$.

\begin{theorem} \label{thm:mainresult} Let $s$ be a fixed positive integer and
$c >1s$ a positive constant. Assume $\frac{c \ln n}{n}<p<1/2$. Then
with probability $1-O(\frac{1}{(\ln \ln n)^{1/4}})=1-o(1)$, the
random matrix $Q(W,p)$ is $(s-1)$-saturated.
\end{theorem}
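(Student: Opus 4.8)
The plan is to prove the matrix is $(s-1)$-saturated by exhibiting, with the stated probability, an explicit submatrix of full rank whose size matches the size of the largest $(s-1)$-unobstructed set in $G(Q)$. The inequality $\rank(Q) \le |\text{largest } (s-1)\text{-unobstructed set}|$ is essentially automatic from the discussion before Theorem \ref{thm:altexpression}: any non-expanding set of rows is dependent, so deleting one row from each "obstruction" cannot decrease the rank, and what remains can be bounded above by the size of an unobstructed set. The real content is the matching lower bound: we must show that after we identify and set aside the (few, small, sparse) non-expanding configurations of size $\le s-1$ predicted by the analysis of $G(n,p)$, the remaining rows are \emph{linearly independent} with high probability. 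So the first step is a structural lemma about the random graph $G(Q) = G(n,p)$ in the range $\frac{c\ln n}{n} < p < 1/2$ with $c > 1/s$: the minimal non-expanding sets are all of size at most $s-1$ (this is the combinatorial heart, and follows from a first-moment computation since $c>1/s$ rules out minimal non-expanding sets of size $\ge s$), they are few in number and vertex-disjoint or nearly so, and — crucially — each is "locally tree-like" with an overwhelming majority of zero entries, matching the intuition quoted in the introduction.

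The second step is the linear-algebra core, carried over from \cite{CTV, CV}: I would expose the randomness row by row (or rather, use the symmetric analogue — exposing a principal minor and its bordering row/column together) and show that conditioned on a large principal submatrix $Q'$ having the "right" rank, adding the next symmetric border almost surely increases the rank by the expected amount unless it creates one of the catalogued small obstructions. The tool here is a quadratic/bilinear version of the Littlewood–Offord / Erd\H{o}s anti-concentration inequalities: if $v$ is a fixed nonzero vector in the left kernel of $Q'$ with many nonzero coordinates, then $v^\top(\text{new random column})$ is nonzero with probability close to $1$; the sparsity means we need the sparse (Kolmogorov–Rogozin-type) version of these bounds, and we need to know that kernel vectors of the relevant submatrices have large support — which is exactly what the structural lemma guarantees, since a kernel vector supported on few coordinates would itself be a small non-expanding configuration. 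Iterating this over the $n - O(1)$ "good" rows, with a union bound over the rare bad events, produces a full-rank submatrix of the desired size.

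The main obstacle, and where most of the work will go, is controlling the \emph{medium-sized} configurations: sets of rows that are not non-expanding (so not automatically dependent) but are still small enough that the anti-concentration argument gives only a weak bound, or sets that have few nonzero columns relative to their size without quite being non-expanding. One has to show that with probability $1 - O((\ln\ln n)^{-1/4})$ no such "near-obstruction" actually produces a dependency — this is presumably the source of the peculiar $(\ln\ln n)^{-1/4}$ error term, arising from optimizing a union bound over configurations of size up to roughly $\ln\ln n$ against the anti-concentration savings, which degrade as the configuration shrinks. I would handle the truly small cases (size $O(1)$) by the explicit structural classification from step one, the large cases (size $\gg 1$) by the iterated anti-concentration of step two where the bounds are strong, and the intermediate regime by a more careful second-moment or entropy argument on the number of possible sparse dependent sets. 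Once the three regimes are stitched together, Theorems \ref{thm:altexpression} and \ref{thm:betterexpression} follow by combining Theorem \ref{thm:mainresult} with the graph-theoretic lemmas identifying when the minimum in \eqref{eqn:rankbound} is attained and when $s$-unobstructed coincides with unobstructed.
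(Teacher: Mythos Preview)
Your proposal correctly identifies the broad architecture --- minor-by-minor exposure, anti-concentration of the Littlewood--Offord type (including the quadratic version), and structural control on the random graph forcing kernel vectors to have large support --- and the upper bound $\rank(Q)\le U_n$ is indeed easy for the reason you indicate (each vertex outside a maximal $(s-1)$-unobstructed set lies in a small non-expanding, hence dependent, set together with vertices already inside).

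There is, however, a genuine gap in your iteration step. You write ``Iterating this over the $n-O(1)$ `good' rows, with a union bound over the rare bad events.'' This cannot work: the per-step failure probability supplied by the (quadratic) Littlewood--Offord bound is only $O((kp)^{-1/4})=O((\ln\ln n)^{-1/4})$, and a union bound over $\Theta(n)$ augmentation steps is hopeless. The paper's device for avoiding this is a potential-function argument that you are missing entirely. One sets $Y_m = U_m - \rank(Q_m)$ and $X_m = 4^{Y_m}$ (or $0$ once $Y_m\le 0$ or some $Q_j$ fails to be ``good''), and proves the contraction
\[
\E(X_{m+1}\mid Q_{n'},\dots,Q_m)\;\le\;\tfrac{3}{5}\,X_m + O\bigl((\ln\ln n)^{-1/4}\bigr).
\]
The point is that when $Y_m>0$ the rank typically jumps by $2$ while $U_m$ rises by at most $1$ (provided the new vertex avoids low-degree vertices, which one controls separately), so $Y_m$ drops by $1$ and $X_m$ contracts by $1/4$; the occasional failures contribute the additive error but are swallowed by the geometric decay. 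Starting from a crude bound $X_{n'}\le(\sqrt2)^{\,n-n'}$, iterating from $n'=\delta n$ to $n$ gives $\E X_n = O((\ln\ln n)^{-1/4})$, and Markov finishes. Without this supermartingale mechanism your argument does not close, and the final error probability would be $\Theta(n)$ times too large.

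A secondary point: your ``three-regime'' picture with a separate second-moment/entropy argument for medium sets is not how the paper is organised, and you would have trouble making it work as stated. The paper instead packages all the needed graph structure into a single notion of a ``good'' matrix (well-separated, small-set expander, few low-degree vertices, and every set of size $\le k=\frac{\ln\ln n}{2p}$ is ``nice'' unless it contains a small non-expanding subset), proves that every $Q_m$ for $n'\le m\le n$ is good with high probability, and then runs the anti-concentration lemmas \emph{uniformly} conditional on goodness. The threshold $k$ is precisely what produces $(kp)^{-1/4}=(\ln\ln n)^{-1/4}$ in the quadratic Littlewood--Offord bound --- so your intuition about the source of the error term is roughly right, but it is not a union-bound optimisation over medium configurations.
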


Since the case where $p=\Omega(1)$ was already addressed in \cite{CTV}, we will assume that $p=o(1)$.

Following the ideas from \cite{Kom1, CTV}, we are going to expose
$Q(W,p)$ minor by minor. Letting $Q_m$ denote the upper left $m
\times m$ minor of
$%
Q(W,p)$, we view $Q_{m+1}$ as being formed by taking $Q_m$ and
augmenting by a column whose entries are chosen independently, along
with the column's transpose.  Let $G_m=G(Q_m)$. In graph theoretic terms, we can view the sequence of $G_m$ as a vertex exposure process of $G(n,p)$.

Our starting observation is that when a good portion of the vertices
have been exposed, the rank of the matrix is  close to its size.

Recall that $p \ge c\ln n/ n$ for a constant $c >1/s$. Let
$0 <\delta<1$ be a constant such that $1/s<\delta c < 1/(s-1)$. Define
$n^{\prime}:=\delta n$.

\begin{lemma}\label{lemma:nearfullrank}
For any constant $\epsilon>0$ there exists a constant $\gamma>0$
such that
\begin{equation*}
{\hbox{\bf P}}(\hbox{rank}(Q_{n^{\prime}})<(1-\epsilon)n^{\prime})=o(e^{-%
\gamma n \ln n})
\end{equation*}
\end{lemma}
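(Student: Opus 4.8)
The plan is to expose $Q=Q(W,p)$ minor by minor as set up above, and to track the corank $\operatorname{co}(Q_m):=m-\rank(Q_m)$. The starting point is an exact algebraic dichotomy at each step. Write $Q_{m+1}$ as $Q_m$ bordered by a column $X=X_{m+1}$ (whose entries are the fresh variables $w_{i,m+1}\xi_{i,m+1}$, $i\le m$), its transpose, and a diagonal entry $d=w_{m+1,m+1}$. If $X\notin\operatorname{col}(Q_m)$ then $\rank(Q_{m+1})=\rank(Q_m)+2$; if $X\in\operatorname{col}(Q_m)$, pick $y$ with $Q_m y=X$ (the scalar $y^{T}X$ is then well defined since $Q_m$ is symmetric), row–reduce the last row against the first $m$, and get $\rank(Q_{m+1})=\rank(Q_m)+1$ unless in addition $d=y^{T}X$. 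Hence $\operatorname{co}(Q_{m+1})-\operatorname{co}(Q_m)\in\{-1,0,+1\}$, and it is $+1$ only at a step where both $X\in\operatorname{col}(Q_m)$ and $d=y^{T}X$; call such a step a \emph{jump}. Fix $\ep':=\ep\delta/2$, so $\ep'n=\ep n'/2$. Since $\operatorname{co}(Q_{\ep'n})\le\ep'n$ trivially and the corank can increase only at jumps, $\rank(Q_{n'})<(1-\ep)n'$ forces at least $\ep n'/2$ jumps among the steps $m\in[\ep'n,n')$, so it suffices to bound the probability of that event by $o(e^{-\gamma n\ln n})$.

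Next I would estimate $\P(\text{jump at }m\mid Q_m)$ for each fixed $m\ge\ep'n$. A jump requires $X\in\operatorname{col}(Q_m)$, i.e.\ $X\perp\ker(Q_m)$, equivalently $\xi_{\cdot,m+1}$ is orthogonal to $D\ker(Q_m)$ where $D=\operatorname{diag}(w_{i,m+1})$ is invertible. If $\ker(Q_m)$ contains a vector of support size $\Omega(n)$, then a linear Littlewood--Offord bound for sparse Bernoulli sums (giving $\P(\sum a_i\xi_i=0)\le(1-p)^{\Omega(n)}=n^{-\Omega(1)}$ when $\Omega(n)$ of the $a_i$ are nonzero) already yields $\P(\text{jump}\mid Q_m)\le n^{-\beta}$ for some constant $\beta>0$. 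If instead every kernel vector of $Q_m$ has small support, I would split off the event that $X=0$ (probability $(1-p)^{m}\le n^{-\ep'c}$ since $m\ge\ep'n$) and, on its complement, observe that on $\{X\in\operatorname{col}(Q_m)\}$ the quantity $y^{T}X$ is a quadratic form $\xi_{\cdot,m+1}^{T}M\,\xi_{\cdot,m+1}$ with $M$ a symmetric matrix depending only on $Q_m$ and the $w_{i,m+1}$; applying the quadratic Littlewood--Offord inequality of \cite{CTV,CV} then forces $\P(d=y^{T}X\mid Q_m)\le n^{-\beta}$, provided $Q_m$ (hence $M$) is not too degenerate. Thus in all cases $\P(\text{jump at }m\mid Q_m)\le n^{-\beta}$, except when $G_m$ carries a certain degenerate local configuration (small non‑expanding, or "almost non‑expanding", vertex sets appearing otherwise than as the expected isolated‑vertex clusters).

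It then remains to rule out, with the required super‑exponential precision, those degenerate configurations of $G_m$ — and this is the main obstacle. One needs a structural lemma for the sparse random graph $G(m,p)$, $p\ge c\ln n/n$, asserting that the probability any such configuration appears in some $G_1,\dots,G_{n'}$ is $o(e^{-\gamma n\ln n})$; this has to be done by a careful union bound over witnesses, using the correct binomial accounting ($\binom{m}{k}=e^{O_{\ep}(n)}$, not $n^{\Theta(n)}$, for $k=\Theta(n)$) together with matching tail estimates for $\P(|N(S)|<|S|)$, and pinning down exactly which degeneracies of $Q_m$ the quadratic Littlewood--Offord step requires one to exclude. Granting this lemma, on the good event the number of jumps in $[\ep'n,n')$ is stochastically dominated by a binomial with $n'$ trials and success probability $n^{-\beta}$, so $\P(\#\text{jumps}\ge\ep n'/2)\le\binom{n'}{\lceil\ep n'/2\rceil}n^{-\beta\ep n'/2}=e^{-\Omega(n\ln n)}$, which combined with the bound on the bad event gives the stated $o(e^{-\gamma n\ln n})$ once $\gamma$ is chosen small enough (in terms of $\ep,\delta,c$). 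The delicate points are thus the quadratic Littlewood--Offord input and the super‑exponentially sharp control of the exceptional local structures of $G(m,p)$.
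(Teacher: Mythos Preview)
Your approach has a genuine gap: the Littlewood--Offord input you invoke does not give the strength you claim. You assert that for sparse Bernoulli variables $\xi_i$ with parameter $p$ and coefficients $a_i$ with $\Omega(n)$ of them nonzero, one has $\P(\sum a_i\xi_i=0)\le(1-p)^{\Omega(n)}=n^{-\Omega(1)}$. This is false for arbitrary real $a_i$: take $a_i=(-1)^i$, so that $\sum a_i\xi_i$ is a centered random walk with step variance $\Theta(p)$, and $\P(\sum a_i\xi_i=0)=\Theta((np)^{-1/2})=\Theta((\ln n)^{-1/2})$. The Hal\'asz bound you cite (Theorem~\ref{thm:linearLO} in the paper) gives exactly $O((Dp)^{-1/2})$, not $n^{-\beta}$; the quadratic version likewise gives only $O((qp)^{-1/4})$. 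Your claimed bound would hold if the $w_{ij}$ were algebraically generic, but the lemma is stated for arbitrary $W$ with nonzero off-diagonals, and for the adjacency-matrix case $W\equiv 1$ the kernel vectors of $Q_m$ typically have small integer entries and many cancellations. With a per-step jump probability of only $(\ln n)^{-O(1)}$, your binomial tail estimate yields $\binom{n'}{\ep n'/2}(\ln n)^{-\Omega(n)}=e^{-\Omega(n\ln\ln n)}$, which is strictly weaker than the $o(e^{-\gamma n\ln n})$ required. The deferred ``structural lemma'' is also not achievable as stated: for $p$ just above $(\ln n)/(sn)$ the degenerate configurations you want to exclude (small non-expanding sets) occur with probability $\Theta(1)$, not $o(e^{-\gamma n\ln n})$.

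The paper's own proof bypasses all of this with a one-shot argument that needs no Littlewood--Offord and no structural information about $G_m$. By symmetry and the union bound it suffices to show that for a \emph{fixed} set of $\ep n'$ columns, the probability they lie in the span of the remaining $(1-\ep)n'$ columns is at most $(1-p)^{(\ep n')^2/2}$. Write
\[
Q_{n'}=\left[\begin{array}{c|c} A & B \\ \hline B^T & C \end{array}\right]
\]
with $C$ of size $\ep n'\times\ep n'$, and condition on $A,B$. If the last $\ep n'$ columns are dependent on the first $(1-\ep)n'$, then $B=AF$ for some $F$ and consequently every entry of $C$ is forced to equal the corresponding entry of $B^TF$. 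But each above-diagonal entry of $C$ is still random and takes any prescribed value with probability at most $1-p$, and these $\binom{\ep n'}{2}$ entries are independent. Hence the conditional probability is at most $(1-p)^{(\ep n')^2/2}\le e^{-p(\ep n')^2/2}=e^{-\Omega(n\ln n)}$, which overwhelms the $\binom{n'}{\ep n'}=e^{O(n)}$ from the union bound. The key point your approach misses is that one can afford to \emph{fix a $\Theta(n)$-sized block of columns} and exploit the $\Theta(n^2)$ independent entries inside it; tracking one column at a time throws this quadratic gain away.
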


Our plan is to show that the addition of the remaining
$n-n^{\prime}$
rows/columns is enough to remove all the linear dependencies  from $%
Q_{n^{\prime}}$, {\it except} those corresponding to non-expanding
subsets of  at most $s-1$ vertices.

The next several lemmas provide some properties of the (random) graph $G_m$ for $n^{\prime}\leq
m \leq n$.

\begin{definition}
A graph $G$ is {\bf well-separated} if the following two conditions
hold:

W1. Any connected subgraph of $G$ on at most $5s$ vertices contains
at most $s-1$ vertices with degree at most $\ln \ln n$.

W2. No cycle of length 1 or between 3 and $12s$ in $G$ contains a vertex of
degree at most $\ln \ln n$.
\end{definition}

\begin{lemma}

\label{lemma:separation} For any constant $\ep >0$, the probability that there is an $m$ between $n'$ and $n$ for which $G_m$ is not well separated is
$O(n^{-sc \delta + 1+ \ep})$.
\end{lemma}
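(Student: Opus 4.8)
The plan is to prove Lemma \ref{lemma:separation} by a union bound over all "bad" configurations — connected subgraphs on $\le 5s$ vertices violating W1, and short cycles (length $1$ or $3$ to $12s$) violating W2 — and over all $m$ with $n' \le m \le n$. Since $m$ ranges over at most $n$ values, it suffices to show that for each fixed $m$ the failure probability is $O(n^{-sc\delta + \ep})$ and then absorb the extra factor of $n$; in fact, since the graphs $G_m$ are nested (vertex exposure), a degree-at-most-$\ln\ln n$ vertex in $G_m$ need not be one in $G_n$, so the cleanest route is to bound, for each fixed vertex set of the relevant size, the probability that it ever forms a bad configuration for some $m$, and then union over vertex sets.

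First I would handle W2. Fix $\ell$ with $\ell = 1$ or $3 \le \ell \le 12s$ and fix an ordered $\ell$-tuple of vertices; the probability that they form a cycle of that length in $G_n$ (hence in any $G_m$) is at most $p^\ell$ (each of the $\ell$ potential edges present independently with probability $p$; a self-loop at the diagonal entry also has probability at most $p$ — and here one uses the loop being present at all, not its value). Given the cycle is present, we want some vertex $v$ on it to have degree $\le \ln\ln n$ in some $G_m$, $m \ge n'$; the most dangerous case is $m = n$, but degree in $G_m$ is at most degree in $G_n$, so it suffices to bound the probability that $v$ has $G_n$-degree at most $\ln\ln n$. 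Since $v$ has $n - \ell = n(1-o(1))$ other potential neighbors each present with probability $p \ge c\ln n/n$, its degree is $\mathrm{Binomial}(n(1-o(1)), p)$ with mean $\ge (c - o(1))\ln n$; a standard Chernoff bound gives $\P(\text{deg} \le \ln\ln n) = n^{-c + o(1)}$. Multiplying the $\le \ell$ choices of $v$, the factor $n^\ell$ for the vertex tuple, the factor $p^\ell \le (\ln n / n)^\ell$ for the cycle, and the degree bound $n^{-c+o(1)}$ yields $O(n^{-c + \ep})$ per cycle length, and summing over the $O(1)$ relevant lengths keeps it $O(n^{-c+\ep})$. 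Since $sc\delta < sc \cdot \tfrac{1}{(s-1)c} = \tfrac{s}{s-1}$... wait — we actually want $n^{-sc\delta + 1 + \ep}$, and $sc\delta > 1$ by choice of $\delta$, so $-sc\delta + 1 < 0$; and $c \ge sc\delta - 1 + 1 = sc\delta$ requires checking, but since $\delta < 1$ we have $c > c\delta \ge sc\delta/s$, and more simply $c \ge sc\delta$ fails in general — so for W2 I should instead note $c > 1 \ge sc\delta - \ep$ need not hold either. The honest accounting: W2 contributes $O(n^{-c+\ep})$ and we need this to be $O(n^{-sc\delta+1+\ep})$, i.e. $c \ge sc\delta - 1$; since $sc\delta < \tfrac{s}{s-1} \le 2$ and $c$ could be as small as just above $1/s$, one checks $sc\delta - 1 < \tfrac{s}{s-1} - 1 = \tfrac{1}{s-1} \le 1 < sc < \cdots$ — in any case the dominant (worst) term is W1, and W2 is comfortably smaller; I would present W2's bound and observe it is dominated.

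Next, W1: fix a set $T$ of $t \le 5s$ vertices; we bound the probability that $G_m[T]$ is connected for some $m \ge n'$ and at least $s$ vertices of $T$ have degree $\le \ln\ln n$ in that $G_m$. Connectivity of $T$ in $G_n$ (which is necessary) costs at most a factor $t^{t-2} p^{t-1}$ (spanning tree) $\le (5s)^{5s}(\ln n/n)^{t-1}$; having $s$ specified vertices of $T$ each of degree $\le \ln\ln n$ in $G_n$ costs $n^{-sc + o(1)}$ by the Chernoff bound above applied to $s$ disjoint-ish neighborhoods (edges among $T$ are few; the bulk of each vertex's potential neighbors lie outside $T$ and are independent across the $s$ vertices up to $O(s^2)$ overlap edges, which only helps). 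Choosing which $s$ of the $t$ vertices are low-degree costs $\binom{t}{s} = O(1)$. Summing over all $\binom{n}{t}$ sets $T$ (factor $n^t$), all $t \le 5s$, and the $O(1)$ subset choices gives $\sum_t n^t (\ln n/n)^{t-1} n^{-sc+o(1)} = \sum_t n \cdot (\ln n)^{t-1} n^{-sc + o(1)} = n^{1 - sc + o(1)}$, where the largest term is $t = 5s$ (or any $t$; the $n^{t} p^{t-1} = n \cdot (\ln n)^{t-1}$ is the same order up to logs for every $t$). This is $O(n^{1 - sc + \ep})$. Finally, to account for all $m$ simultaneously rather than just $m=n$: I argued monotonically — any configuration bad for some $G_m$ is "witnessed" by edges/non-edges all decided by time $n$, and degrees only decrease as $m$ decreases, so "$\deg_{G_m}(v) \le \ln\ln n$ for some $m \ge n'$" is implied by "$\deg_{G_{n'}}(v) \le \ln\ln n$", and $\deg_{G_{n'}}(v)$ is $\mathrm{Binomial}(n' - O(1), p)$ with mean $\ge (c\delta - o(1))\ln n$, giving $\P \le n^{-c\delta + o(1)}$ per vertex. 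Redoing the W1 bound with $n^{-c\delta}$ in place of $n^{-c}$ per low-degree vertex: $n^{1} \cdot n^{-sc\delta + o(1)} = n^{1 - sc\delta + o(1)} = O(n^{-sc\delta + 1 + \ep})$, which is exactly the claimed bound. (And W2 similarly becomes $O(n^{-sc\delta + \ep})$, dominated.)

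The main obstacle is the bookkeeping for the union over all $m \in [n', n]$: naively multiplying a per-$m$ bound by $n$ would cost an extra factor of $n$ and destroy the estimate, so one must exploit the nestedness of the exposure process — the key point being that $\deg_{G_m}(v)$ is monotone nondecreasing in $m$, so the event "$v$ is low-degree in some $G_m$, $m \ge n'$" collapses to "$v$ is low-degree in $G_{n'}$", eliminating the union over $m$ entirely (while the structural events — connectivity of $T$, presence of a cycle — are monotone the other way and hence are controlled by $G_n$). Balancing which graph ($G_{n'}$ or $G_n$) to use for which sub-event, and verifying the independence needed for the Chernoff bounds on the $s$ low-degree vertices survives the $O(s^2)$ edges internal to the configuration, is the delicate part; everything else is routine first-moment estimation.
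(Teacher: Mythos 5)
Your argument is correct and reaches the paper's bound, but it packages the key step differently. Both you and the paper rely on the same monotonicity facts — degrees $\deg_{G_m}(v)$ are nondecreasing in $m$, while connectivity and cycle-presence are monotone increasing — but the paper exploits them through a first-failure (stopping-time) decomposition: it fixes the first index $m_0$ at which well-separation fails, treats $m_0=n'$ by a direct union bound, and for $m_0>n'$ observes that since $G_{m_0-1}$ was well separated any new bad configuration must contain the freshly added vertex $m_0$, buying a factor of $O(s/n)$ that makes the sum over $m_0$ converge. You instead collapse the union over $m$ at the level of each fixed vertex set $T$: you bound the event that $T$ is ever bad by \emph{connectivity in $G_n$} intersected with \emph{low degree in $G_{m^*}$} where $m^*=\max(n',\max T)$, and then union over $T$ only. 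For the negative correlation between the structural event and the low-degree event, the paper invokes FKG, while you use the more elementary observation that after discarding edges internal to $T$ the relevant edge sets are disjoint and hence independent — this is cleaner and avoids an appeal to a correlation inequality. Two small gaps you should tighten: (i) the paper first disposes of $p\ge(\ln n)^2/n$ separately, which licenses the bound $p^{t-1}\le((\ln n)^2/n)^{t-1}$; your estimate $p^{t-1}\le(\ln n/n)^{t-1}$ is only valid for $p\le\ln n/n$, so you need either that same reduction or to note that larger $p$ makes the Chernoff degree term decay super-polynomially, which more than compensates; and (ii) your "collapse to $G_{n'}$" needs the $m^*$ caveat you mention in passing but never nail down — for a vertex $v>n'$ the degree $\deg_{G_{n'}}(v)$ is undefined, and one must use $m^*=\max(n',\max T)\ge n'$ instead, which still gives mean degree $\ge(c\delta-o(1))\ln n$. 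Neither of these is a conceptual problem; once stated precisely, your route is a valid and arguably more transparent alternative to the paper's.
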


Note that by our choice of $\delta$ this probability will be $o(1)$
for sufficiently small $\ep$.

\begin{remark} \label{remark:smallsets}
One corollary of this lemma and our main results is that whether the
diagonal entries of $W$ are non-zero is likely to be irrelevant to
the collection of dependent sets in $Q(W,p)$.  The minimal dependent sets
will with high probability be the non-expanding sets of size at most
$s-1$ and therefore correspond to vertices of degree at most $s-1$
in $G$.  On the other hand, a graph satisfying $W2$ only contains
self-loops at vertices of degree at least $\ln \ln n$.

Another way of thinking about this is as follows: If we form a new
matrix $W'$ by replacing all of the entries on the diagonal of $W$
by 0, with high probability the graph of $Q(W',p)$ will contain
exactly the same non-expanding sets as the graph of $Q(W,p)$ and
thus (by Theorem \ref{thm:betterexpression}) the same dependent sets
of rows.
\end{remark}

\begin{definition}
A graph $G$ is a {\bf small set expander} if every subset $S$ of the
vertices of $G$ with $|S| \leq \frac{n}{\ln^{3/2} n}$ either has at
least $|S|$ edges connecting $S$ to $\bar{S}$, its complement, or has
a subset $S' \subset S$ with $|S'| \leq s-1$ and at most $|S'|-1$
edges connecting $S'$ to $\bar{S'}$.
\end{definition}

\begin{lemma} \label{lemma:expansion}
For any $m>n'$ the probability that $G_m$ is well separated but is
not a small set expander is $O(n^{-4})$.
\end{lemma}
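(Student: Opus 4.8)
The plan is to condition on $G_m$ being well separated and then union-bound over all potential "bad" sets $S$ — those with $|S| \le n/\ln^{3/2} n$, fewer than $|S|$ edges to $\bar S$, and no small sub-obstruction of size $\le s-1$. First I would observe that it suffices to control \emph{connected} bad sets: if $S$ has fewer than $|S|$ boundary edges, some connected component $S_0$ of $S$ already has at most $|S_0|-1$ boundary edges (counting edges from $S_0$ to $\bar{S_0} = V \setminus S_0$, which includes $S \setminus S_0$). So without loss of generality $S$ is connected, and it has at most $|S|-1$ edges leaving it; since $S$ is connected it also has at least $|S|-1$ internal edges, so the total number of edges touching $S$ is at most $2|S| - 2 = 2(|S|-1)$. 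Thus the induced subgraph $G_m[S]$ together with its boundary has at most $2(|S|-1)$ edges on $|S|$ vertices, i.e. $S$ is "almost a tree" — it has at most $|S|-1$ "extra" edge-endpoints beyond a spanning tree.

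The main step is a first-moment computation. Fix $k = |S|$. The number of ways to choose a connected set $S$ on $k$ vertices with the above edge structure, and to have all those edges present, I would bound using a standard counting: choose a spanning tree ($k^{k-2}$ labelled trees, or more crudely $\binom{n}{k}$ ways to place the vertices times $k!$-type factors absorbed into $e^{O(k)}$), pay $p^{k-1}$ for its edges, then account for the (at most $k-1$) remaining edge-endpoints attached to $S$, each contributing a factor roughly $O(np)$. The probability that a \emph{specific} such configuration occurs is then of order $\binom{n}{k} k^{O(1)} p^{k-1} (np)^{k-1} e^{O(k)}$, which simplifies to $\left[ C (np)^2 / n \cdot \text{(correction)} \right]^{k} \cdot (\text{small})$; more carefully it behaves like $(C/\ln^{1/2} n)^{k}$-type terms once one uses $k \le n/\ln^{3/2} n$ together with $np = O(\ln n)$ (recall $p = o(1)$, so $np \le \ln^{1+o(1)} n$ is the regime; and the factor $n/k \ge \ln^{3/2} n$ makes $\binom{n}{k}(np/n)^{k}\cdot(\text{stuff})$ summable). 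Summing the geometric-type series over $k \ge 2$ gives $O(n^{-4})$, provided the small-$k$ cases are handled separately.

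The small-$k$ cases ($k \le 5s$, say) are exactly where well-separation enters, and this is the step I expect to be the main obstacle. For such $k$ the naive first-moment bound is only polynomially small, not $O(n^{-4})$, because a constant-size tree of light vertices is not that rare. Here I would argue that a connected bad set $S$ of size $k \le 5s$ with no obstruction of size $\le s-1$ must, by property W1, contain at most $s-1$ vertices of degree $\le \ln\ln n$ in $G_m$ — hence at least $k - s + 1$ vertices of $S$ have degree $> \ln\ln n$. Each such heavy vertex sends $> \ln\ln n$ edges out; at most $k-1 \le 5s$ of these can stay inside $S$, so each heavy vertex contributes $> \ln\ln n - 5s$ boundary edges, giving a total of $\gg (k-s+1)\ln\ln n$ boundary edges. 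For $k \ge s$ this is far more than $k - 1$, contradicting that $S$ is bad; and for $k < s$... wait, a bad $S$ of size $< s$ with $\le |S|-1$ boundary edges \emph{is} itself an allowed obstruction, so it is excluded by hypothesis. Hence for well-separated $G_m$ there are simply \emph{no} bad sets of size $\le 5s$, and the first-moment bound only needs to run over $k > 5s$, where the base of the geometric series is comfortably below $1$ and the leading $\binom{n}{k}$ vs. edge-probability trade-off yields $O(n^{-4})$. The one routine check I would still perform is that the bound $(np)^{2k-2}\binom{n}{k} k^{O(1)} e^{O(k)} \cdot n^{-(k-1)}$ is indeed $O(n^{-4})$ uniformly for $5s < k \le n/\ln^{3/2} n$, splitting into the range $k = O(1)$ (use $n^{-(k-1)} \cdot \mathrm{polylog} = O(n^{-4})$ once $k \ge 5$, and absorb $5s < k \le 5$ — impossible for $s \ge 1$ unless $s$ small, in which case enlarge the "$5s$" threshold to $\max(5s, 5)$) and the range $k \to \infty$ (where $(1/\ln^{1/2}n)^k$ dominates). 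Taking a union bound over the single value $m$ (the lemma is stated for fixed $m$) completes the proof.
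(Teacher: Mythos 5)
Your reduction to connected sets and your handling of $k\le 5s$ via W1 alone are both correct (and that small-$k$ argument is a neat shortcut). But the first-moment step for $k>5s$ has a real gap: you never pay for the \emph{absence} of the other potential boundary edges, and that factor is what actually makes the expected count small. Your expression $\binom{n}{k}\,k^{O(1)}\,p^{k-1}\,(np)^{k-1}\,e^{O(k)}$, after using $\binom{n}{k}\le(en/k)^{k}$ and $p=np/n$, is of order $n\,e^{O(k)}\,(np)^{2k-2}/k^{k}$. For constant $k>5s$ and $np=\Theta(\ln n)$ this is $\Theta\bigl(n\,(\ln n)^{2k-2}\bigr)$, nowhere near $O(n^{-4})$; the "routine check" in your last paragraph in effect replaces $\binom{n}{k}\,n^{-(k-1)}$ by $n^{-(k-1)}$ and silently drops a factor of $n^{k}$. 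The missing ingredient is $(1-p)^{k(m-k)-O(k)}\approx e^{-kmp}=n^{-k\delta c(1+o(1))}$, coming from the requirement that all but at most $k-1$ of the $\approx km$ potential boundary edges be absent. With it, your plan does close: since $\delta c>1/s$, the $k=5s+1$ term is $n^{1-(5s+1)\delta c+o(1)}=o(n^{-4})$, and the ratio of consecutive terms is $O((\ln n)^{2}n^{-\delta c})\to 0$, so the whole sum is $o(n^{-4})$. Also, the claim that $S$ has at most $2(|S|-1)$ edges "touching" it is simply false ($G_m[S]$ could be dense); fortunately it is not needed, since the "$G_m[S]$ connected" event and the "few boundary edges" event are independent and can be bounded separately.

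For comparison, the paper avoids the delicate intermediate-$k$ range altogether. It splits on $p$ rather than on $|S|$: when $p\ge 12\ln n/n$ the direct first moment (with the $(1-p)$ factor) is $o(n^{-4})$ for every size at once; when $p<12\ln n/n$ it introduces "local sparseness" (every subgraph on $\le n/\ln^{3/2}n$ vertices has average degree $<8$), shows local sparseness fails with probability $O(n^{-4})$, and then proves the purely \emph{deterministic} implication: locally sparse plus well-separated forces small set expansion. Local sparseness is exactly what caps the internal edges of a minimal bad set, which is the role your (incorrect) "almost a tree" observation was trying to play. Your W1-only treatment of $k\le 5s$ is a genuine simplification in that range, but for larger $k$ you must either insert the absent-edge factor or fall back on a deterministic reduction like the paper's.
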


\begin{definition}
\label{def:distinguished} A set $S$ of the vertices of a graph $G$
is {\bf
nice} if there are at least two vertices of $G$ each is adjacent to
exactly one vertex in $S$.

A set $S$ of the vertices of a graph $G$ is {\bf nearly nice} if there is at least one vertex in $G$ which has exactly one neighbor in $S$.
\end{definition}

Set $k := {\frac{\ln \ln n }{2 p}}$.  We will next define a class of 'good' matrices which behave well under augmentation.

\begin{definition}

\label{def:goodgraph} A graph $G$ is {\bf good} if the following four
properties hold:

1. Every minimal non-nice subset of the vertices of $G$
either has size at least $k+1$ or contains a non-expanding subset of
size at most $s-1$.

2. Every minimal non-nearly nice subset of the vertices of $G$
either has size at least $k+1$ or is
a non-expanding set of size at most $s-1$.

3. At most $\frac{1}{p \ln n}$ vertices of $G$ have degree less than
$s$.

4. $G$ is well separated.

A symmetric matrix $Q$ is {\bf good} if the graph $G(Q)$ is good.
\end{definition}

The next lemma states that in the  augmentation process we will
likely run only into good matrices.
\begin{lemma} \label{lemma:goodmatrices} Let $\ep$ be a positive constant.
Then with probability $1-O(n^{1-sc\delta+\ep})$, $Q_m$ is good for
every $m$ between $n'$ and $n$ .
\end{lemma}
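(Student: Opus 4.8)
The plan is to prove Lemma \ref{lemma:goodmatrices} by verifying each of the four defining properties of a good graph separately, and then taking a union bound over all $m$ between $n'$ and $n$. Property~4 (well-separatedness) is exactly the content of Lemma \ref{lemma:separation}, which already gives failure probability $O(n^{-sc\delta+1+\ep})$ simultaneously over all such $m$, so nothing new is needed there. Property~3 is a first-moment computation: the number of vertices of degree less than $s$ in $G_m$ is stochastically dominated by the analogous count in $G(n,p)$, where each vertex has degree $< s$ with probability at most $\binom{n}{s-1}(1-p)^{n-s} = n^{-c+o(1)}$ (using $p \ge c\ln n/n$); since $c\delta > 1/s \ge 1/s$ we can absorb constants, and a Markov/Chernoff argument shows that with probability $1-O(n^{1-sc\delta+\ep})$ fewer than $\frac{1}{p\ln n}$ vertices have low degree, uniformly in $m$ (the worst case is $m$ near $n'$, where $p \cdot n'$ is smallest).

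The substance of the lemma is Properties~1 and~2, which concern the structure of minimal non-nice and minimal non-nearly-nice sets. Here I would argue as follows. Suppose $S$ is a minimal set that is not nearly nice (so no vertex of $G_m$ has exactly one neighbor in $S$) and that $S$ has size between $s$ and $k$; I want to show that with small probability such a configuration exists and is \emph{not} a non-expanding set of size $\le s-1$. Minimality forces strong structure: every proper subset is nearly nice, so removing any vertex of $S$ creates a vertex with a unique neighbor in the remaining set. Combined with the small-set expansion guaranteed by Lemma \ref{lemma:expansion} (which I may assume on the good event there), a set $S$ of size $\le n/\ln^{3/2}n$ that fails to be nearly nice must be ``densely connected to its exterior in a balanced way'': every outside neighbor of $S$ has at least two neighbors in $S$, so $S$ together with its neighborhood spans many edges. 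A counting argument then bounds the number of such potential $S$ of each size $t \le k$ and the probability each occurs: roughly, choosing $t$ vertices and requiring that the at least (something like) $t/2$ or more ``boundary'' vertices each hit $S$ twice forces a number of edges that is super-linear in $t$ relative to the number of free vertex choices, making the expected count $n^{-\Omega(t)}$ or better — in particular $o(n^{1-sc\delta+\ep})$ after summing over $t$ and over $m$. The same scheme handles Property~1 for non-nice sets (where one asks for \emph{two} vertices with a unique neighbor in $S$, a slightly weaker requirement, handled identically with constants adjusted).

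To assemble the bound uniformly in $m$, I would fix a candidate bad configuration — a set $S$ of vertices and the pattern of edges between $S$, $N(S)$, and the rest — and bound the probability that this configuration is present in $G_m$ for some $m \ge n'$; since $G_m$ is an induced subgraph of $G(n,p) = G_n$ on the first $m$ vertices, the presence of any fixed edge-pattern on a fixed vertex set in \emph{some} $G_m$ is implied by its presence in $G_n$ (the edges only get added as $m$ grows), so it suffices to do the first-moment estimate once, in $G(n,p)$, at the cost of no union bound over $m$ at all for Properties~1--3; the $O(n^{1-sc\delta+\ep})$ term then comes from summing the per-configuration probabilities. Finally, I union the four events: Property~4 contributes $O(n^{-sc\delta+1+\ep})$ from Lemma \ref{lemma:separation}, the small-set-expander event from Lemma \ref{lemma:expansion} contributes $O(n^{-4}) \cdot n = O(n^{-3})$ over all $m$, and Properties~1--3 each contribute $O(n^{1-sc\delta+\ep})$; since $sc\delta > 1$ by the choice of $\delta$, all of these are $O(n^{1-sc\delta+\ep})$ and hence $o(1)$, giving the stated bound.

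The main obstacle will be Properties~1 and~2: making the counting argument for minimal non-nice and non-nearly-nice sets tight enough. The delicate point is that a minimal bad set $S$ of size $t$ has roughly $t$ ``free'' vertex choices but only a weak a priori lower bound on its edge count, so the naive first moment $\binom{n}{t} p^{(\text{edges})}$ is not obviously small; one genuinely needs the structural input from well-separatedness (W1/W2 rule out low-degree vertices sitting in short cycles or small dense pieces) and from small-set expansion (forcing the boundary to be ``doubly covered'') to upgrade the edge lower bound to something like $(1+c')t$ edges among the relevant $O(t)$ vertices, which is exactly what is needed to beat the binomial coefficient. Getting the book-keeping right across the regimes $t \le s-1$ (where non-expanding sets are genuinely allowed as exceptions), $s \le t \le 5s$ (controlled by W1), and $5s < t \le k$ (controlled by expansion) is where the real work lies.
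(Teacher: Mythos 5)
Your handling of Property~4 (citing Lemma \ref{lemma:separation}) and your overall union-bound scheme match the paper, which formally decomposes into events $C_1$ (few low-degree vertices), $C_2$ (bounded max degree, an auxiliary event you omit but don't really need to highlight), $C_3$ (well-separated, locally sparse, small-set expander), and the residual event that some $Q_m$ is not good despite $C_1 \wedge C_2 \wedge C_3$. Your Property~3 sketch is essentially the paper's Markov bound for $\P(\neg C_1)$, modulo a small sign confusion about which direction the stochastic domination goes (the worst case is $G_{n'}$, not $G_n$, which you do eventually note).

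However, there are two genuine problems with your treatment of Properties~1 and~2, which is where the substance of the lemma lives. First, the monotonicity reduction to $G_n$ is false: for a fixed set $S$, ``$S$ is non-nice in $G_m$'' is not a property of the induced graph on a fixed vertex set, since it asserts the \emph{non-existence} of a witness vertex with exactly one neighbor in $S$, and the pool of potential witnesses grows with $m$. A set can be non-nice in $G_m$ and nice in $G_n$, so you cannot skip the union bound over $m$; the paper pays the factor of $n$ explicitly at the end of each case. Second, and more seriously, a single counting argument cannot cover the whole range $1 \le |S| \le k$. The paper splits into three regimes: for $\frac{1}{p\sqrt{\ln n}} \le |S| \le k$ it runs a large-deviation/concentration computation showing the expected number of vertices with a unique neighbor in $S$ is so large (roughly $m|S|p\,e^{-|S|p} \to \infty$) that even one or fewer such vertices is exponentially unlikely; for $|S| < \frac{1}{p\sqrt{\ln n}}$ with at least $10|S|$ boundary edges it runs the ``doubly covered boundary'' counting you sketch; but for $|S| < \frac{1}{p\sqrt{\ln n}}$ with fewer than $10|S|$ boundary edges, the first-moment bound is hopeless and the paper instead gives a \emph{deterministic} argument: local sparsity plus W1/W2 force $S$ to decompose into small components (in the squared graph) of low-degree vertices, and the forest structure from W2 then forces each such component either to be a non-expanding set of size at most $s-1$ or to have a vertex adjacent to exactly one of its elements. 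Your proposal gestures at needing structural input to ``upgrade the edge lower bound,'' but the few-edges regime cannot be salvaged by edge counting at all — the conclusion there is not that such $S$ are improbable, but that they are \emph{impossible} unless they contain the allowed small non-expanding subsets. That dichotomy is exactly why Properties~1 and~2 are stated with an ``either/or,'' and it is the key missing idea in your sketch.
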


We now consider the effect of augmentation on the rank of $A$ when $A$ is a good matrix.

\begin{definition} \label{def:normalpair}
A pair $(G, G')$ of graphs is called {\bf normal} if the following properties hold:

1. $G$ is an induced subgraph on $|G'|-1$ vertices of $G'$.

2. The new vertex added to $G$ is not adjacent to any vertex which was part of a non-nearly nice subset in $G'$.

A pair $(A, A')$ of symmetric matrices is normal if the pair of graphs $(Q(A), Q(A'))$ is normal.
\end{definition}

\begin{lemma} \label{lemma:singaug} Let $Q$ be any fixed, good $m \times
m$ matrix which is not $s-1$-saturated.  Then
\begin{equation*}
{\hbox{\bf P}}(\,\, \hbox{rank}(Q_{m+1})-\,\, \hbox{rank}(Q_m)<2 |
(Q_m, Q_{m+1}) \, \, \hbox{is normal} \wedge Q_m=Q)=O((kp)^{-1/2}).
\end{equation*}
\end{lemma}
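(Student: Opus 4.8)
The plan is to condition on $Q_m = Q$ (a fixed good, not $(s-1)$-saturated matrix) and on the event that $(Q_m,Q_{m+1})$ is normal, and then analyze the random column $\v = (q_{1,m+1},\dots,q_{m,m+1})^T$ that is appended (together with its transpose and a random diagonal entry) to form $Q_{m+1}$. Write $r = \rank(Q_m)$. Augmenting a symmetric matrix by a column, its transpose, and a diagonal entry increases the rank by $0$, $1$, or $2$; the rank fails to increase by $2$ precisely when $\v$ lies in a certain affine-type subspace determined by $Q_m$. More precisely, if $\v$ is not in the column space $\Conv$... rather the column space $V$ of $Q_m$, the rank goes up by at least $1$ automatically; and a short linear-algebra computation (carried out in \cite{CTV,CV}) shows that the rank goes up by exactly $2$ unless $\v$ satisfies one of a bounded number of linear conditions read off from the left null space of $Q_m$. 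So the task reduces to bounding $\P(\v \in H)$ for each such ``bad'' hyperplane-like set $H$, where $\v$ has i.i.d.\ sparse coordinates $q_{i,m+1} = w_{i,m+1}\xi_{i,m+1}$.

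The key step is a Littlewood--Offord / Odlyzko-type bound: for a fixed nonzero vector $a \in \C^m$, $\P(\sum_i a_i q_{i,m+1} = b)$ is small provided $a$ has many nonzero coordinates, because each coordinate $q_{i,m+1}$ is $0$ with probability $1-p$ and otherwise equals a fixed nonzero number $w_{i,m+1}$. If $a$ has at least $t$ nonzero entries, the classical estimate gives a bound of order $(tp)^{-1/2}$ (conditioning on which coordinates are nonzero, this is a one-dimensional anticoncentration bound for a sum of independent nonzero terms, summed against the $\mathrm{Binomial}(t,p)$ count of active coordinates, whose typical fluctuations are of order $\sqrt{tp}$). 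Thus everything hinges on showing that the relevant vectors $a$ — the vectors in the left kernel of $Q_m$ that could produce a rank-increment failure — have at least $k = \tfrac{\ln\ln n}{2p}$ nonzero coordinates, since then $(kp)^{-1/2} = (\tfrac12\ln\ln n)^{-1/2}$ is exactly the claimed bound. This is where the hypotheses enter: because $Q$ is \emph{not} $(s-1)$-saturated, its left kernel is not spanned by indicators of non-expanding sets of size $\le s-1$, so there is a kernel vector whose support is not contained in such a small set; because $Q$ is \emph{good} (properties 1 and 2, controlling minimal non-nice and minimal non-nearly-nice sets), the support of any such kernel vector — which corresponds to a non-nice or non-nearly-nice set of rows — must then have size at least $k+1$; and the \emph{normality} hypothesis on $(Q_m,Q_{m+1})$ guarantees the new column is not adjacent to the vertices involved in the small (non-nearly-nice) exceptional configurations, so those do not spuriously reduce the support we can exploit. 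One also uses property 3 (few low-degree vertices) and well-separatedness to rule out degenerate cancellations in the support count.

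The main obstacle, and the part requiring the most care, is the combinatorial bookkeeping that translates ``$Q$ good and not $(s-1)$-saturated'' into ``every relevant left-kernel vector has $\ge k$ nonzero entries, even after intersecting its support with the coordinates the new column actually touches.'' One must be precise about \emph{which} linear functionals of $\v$ govern the rank increment (there are the functional coming from membership in the column space of $Q_m$, and — in the symmetric setting — an additional functional involving the diagonal entry and $\v^T Q_m^{+}\v$ type quantities), check that each such functional has a large-support coefficient vector, and verify that the normality condition precisely excludes the cases where the coefficient vector could be small. Once that reduction is in place, the probabilistic estimate is the standard sparse Littlewood--Offord bound and the $O((kp)^{-1/2})$ conclusion follows by a union bound over the bounded number of bad functionals. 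I would organize the write-up as: (i) the linear-algebra reduction to anticoncentration on one or two functionals; (ii) the support lower bound for those functionals using goodness + non-saturation + normality; (iii) the sparse Littlewood--Offord estimate; (iv) assembling (i)--(iii) with a union bound.
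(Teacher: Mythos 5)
Your proposal follows the same route as the paper: use non-saturation to extract a null vector $v$ whose support is $(s-1)$-unobstructed, use goodness (property 2: minimal non-nearly-nice sets are large) to force $|\mathrm{supp}(v)| > k$ (the contradiction being that a nearly-nice support would have some row whose inner product with $v$ is a single nonzero $v_i$), use normality and property 3 to argue that the coordinates of the new column forced to zero number only $o(k)$, and then apply the Hal\'asz/Littlewood--Offord bound with $\rho = p$ to get $O((kp)^{-1/2})$. The one place you over-complicate is in the linear-algebra reduction: you suggest one must also control a quadratic (Schur-complement) functional involving the diagonal entry and $\v^{T}Q_m^{+}\v$, and keep the possibility that lying outside the column space only gives rank up by $1$. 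In fact, by symmetry of $Q_m$, if the new column $x$ is not in the column span of $Q_m$, then $x^{T}$ is also not in its row span, so the rank jumps by exactly $2$ with no further conditions; the paper exploits this to reduce everything to the single linear event $\sum_i v_i x_i \neq 0$. Dropping the unnecessary quadratic consideration both simplifies the bookkeeping you flagged as the main obstacle and avoids the weaker $(kp)^{-1/4}$ bound that the quadratic Littlewood--Offord lemma would give (the quadratic version is needed only for Lemma~\ref{lemma:nonsingaug}).
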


What the above lemma says is that if $Q_m$ is good but not
$s-1$-saturated, then augmenting it will tend to remove some of the
dependencies among the rows of $Q_m$ (note that $kp$ is tending to
infinity by assumption).  If $(Q_{m+1}, Q_m)$ is normal, then the
dependencies removed can't correspond to small non-expanding subsets
of the rows of $Q_m$.  This implies that in some sense $Q_{m+1}$ is
a little closer to being saturated then $Q_m$ was.

Now suppose on the other hand that $Q_m$ is both good and already
$s-1$-saturated.  We are going to show that  (again assuming no
change in the non-expanding subsets) with high probability $Q_m$
does not gain any new dependencies by being augmented.

\begin{lemma} \label{lemma:nonsingaug} Let $Q$ be any fixed, good $m \times m$ matrix which is also $s-1$-saturated.  Then
\begin{equation*}
{\hbox{\bf P}}(\,\, \hbox{rank}(Q_{m+1})- \,\, \hbox{rank}(Q_m)<2 |
(Q_m, Q_{m+1}) \, \, \hbox{is normal} \wedge Q_m=Q)=O((kp)^{-1/4}).
\end{equation*}
\end{lemma}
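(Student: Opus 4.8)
Throughout, the event we must control (given the normal conditioning and $Q_m=Q$) is that augmentation fails to raise the rank, i.e.\ $\rank(Q_{m+1})=\rank(Q_m)$; this is precisely the event that $Q_{m+1}$ acquires a dependency beyond those forced on it, and ruling it out is what preserves $(s-1)$-saturation. The plan is to reduce it to an anti-concentration estimate for a quadratic form and then invoke the quadratic Littlewood--Offord inequality of \cite{CTV}. Write the augmenting column as $(v^{T},a)^{T}$ with $v=(w_{1,m+1}\xi_{1,m+1},\dots,w_{m,m+1}\xi_{m,m+1})^{T}$ and $a=w_{m+1,m+1}\xi_{m+1,m+1}$; these variables are mutually independent and independent of $Q_m$. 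The bordering identity shows the rank fails to increase exactly when $v$ lies in the column space of $Q_m$ and $a=v^{T}Q_m^{+}v$. Since $Q$ is $(s-1)$-saturated and well separated, $\ker Q$ is spanned by the dependency vectors of the minimal non-expanding sets of size at most $s-1$; by W1 these sets are pairwise disjoint, and each dependency vector is supported on, and nowhere vanishing on, its set. Under the normal conditioning the new vertex meets at most one vertex of each such set, and if it meets exactly one the corresponding dependency is destroyed and the rank jumps by $2$; hence on the event $\rank(Q_{m+1})=\rank(Q_m)$ the new vertex misses every one of these sets. In particular $v$ is then automatically orthogonal to $\ker Q$, and we may pass to $M:=Q[R,R]$, where $R$ is the complement of a transversal of the minimal non-expanding sets; it is exactly $(s-1)$-saturation that makes $M$ nonsingular, and a Schur-complement computation gives that $\rank(Q_{m+1})=\rank(Q_m)$ iff $a=v_{R}^{T}M^{-1}v_{R}$, where $v_R$ denotes the restriction of $v$ to the coordinates in $R$.

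Because $a$ is independent of $v$ and of the normal conditioning, the probability in question is at most $\sup_{t}\P(v_{R}^{T}M^{-1}v_{R}=t\mid\text{normal})$. Now $v_{R}^{T}M^{-1}v_{R}=\sum_{i,j\in R}(M^{-1})_{ij}w_{i,m+1}w_{j,m+1}\,\xi_{i,m+1}\xi_{j,m+1}$ is a quadratic form in the independent Bernoulli$(p)$ variables $\xi_{i,m+1}$, $i\in R$. The conditioning only forces a sub-collection of these variables to vanish --- those indexing the (few) low-degree vertices lying in the minimal non-expanding sets, together with the handful forbidden by normality --- and by property 3 of a good graph and well-separatedness this sub-collection has size $o(m)$, so $m-o(m)$ genuinely free Bernoulli$(p)$ variables remain. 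The coefficient matrix of the resulting form is $D\,M^{-1}[F,F]\,D$, with $F$ the free index set and $D$ a nonsingular diagonal matrix built from the $w_{i,m+1}$; since $M^{-1}$ is a nonsingular $|R|\times|R|$ matrix, deleting $o(m)$ rows and columns leaves $M^{-1}[F,F]$ of rank $m-o(m)$.

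Finally one applies the quadratic Littlewood--Offord inequality of \cite{CTV} in its sparse, Bernoulli$(p)$ form: a quadratic form over independent Bernoulli$(p)$ variables whose coefficient matrix has a full-rank block on $N$ of the free variables satisfies $\sup_{t}\P(\,\cdot=t)=O((Np)^{-1/4})$. Properties 1 and 2 of a good graph --- the ``nice'' and ``nearly nice'' dichotomy --- are what guarantee $N\ge k$: a minimal set obstructing the needed richness is either a small non-expanding set (already excluded by the conditioning) or has size at least $k+1$, which is the point where the parameter $k=\tfrac{\ln\ln n}{2p}$ enters, and one obtains the bound $O((kp)^{-1/4})$. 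The main obstacle is the penultimate step: one must rule out that the fixed, possibly adversarial weights $w_{ij}$ conspire to make $M^{-1}$, restricted to the free indices, degenerate in a way that defeats the quadratic Littlewood--Offord estimate. This is exactly where the structural facts about good sparse random graphs (well-separatedness and small-set expansion, Lemmas \ref{lemma:separation} and \ref{lemma:expansion}) are used: they force $M$ to be, outside a negligible exceptional set, sufficiently ``spread'' that the relevant minors of $M$, hence the entries of $M^{-1}$, are nonzero no matter what $W$ is.
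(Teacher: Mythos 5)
Your overall plan---isolate a nonsingular symmetric block, reduce $\rank(Q_{m+1})=\rank(Q_m)$ to the vanishing of a quadratic form in the new Bernoulli$(p)$ entries, and invoke a quadratic Littlewood--Offord bound with $q\approx k$---is the paper's strategy. But two of the steps you treat as routine are in fact the substance of the proof, and one claim is simply wrong. First, the assertion that ``by W1 these sets are pairwise disjoint'' fails: if $a,b,c$ are three degree-one vertices with a common neighbor, then $\{a,b\},\{a,c\},\{b,c\}$ are each minimal non-expanding and pairwise overlapping, yet for $s\ge4$ nothing in W1 is violated. Consequently your ``transversal'' is not well defined; what actually plays that role is the set $T\setminus T_1$, and identifying it requires the paper's Lemma \ref{lemma:Tcycles} (that $T$ is an independent set) and Lemma \ref{lemma:matching} (a unique matching between some $T_1\subset T$ and $N(T)$). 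Your claim that the resulting principal minor $M$ is nonsingular is true but nontrivial: it is exactly the content of Lemma \ref{lemma:nonsingC} together with the block structure, and the paper's cofactor expansion along the $T_1$--$N(T)$ matching (rather than a black-box Schur complement) is what makes the reduction clean---it eliminates both $T$ and $N(T)$, leaving a quadratic form whose coefficient matrix is the cofactor matrix of $C=A(G\setminus(T\cup N(T)))$, with the random entries indexed only by $G\setminus(T\cup N(T))$.

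The decisive gap is your own ``penultimate step.'' You correctly identify that one must show the coefficient matrix has, for at least $k(1-o(1))$ columns, at least $k(1-o(1))$ nonzero entries, but you offer only the hope that well-separatedness and small-set expansion ``force the relevant minors of $M$ to be nonzero.'' That is precisely what has to be \emph{proved}, and it is the bulk of the paper's argument: one shows that $G\setminus(T\cup N(T))$ is ``almost good'' (Lemma \ref{lemma:nearniceness}), so that after deleting any column of $C$ the unique surviving linear dependence among the rows involves at least $k-\tfrac{1}{p\ln n}$ rows, except for $o(n)$ exceptional columns attributable to vertices near the few low-degree vertices. Each non-exceptional column then yields $\ge k(1-o(1))$ nonzero cofactors, and that is what licenses the $O((kp)^{-1/4})$ bound via Lemma \ref{lemma:modifiedquadraticL-O}. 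Without an argument of this kind your proposal establishes the reduction but not the anti-concentration, and so does not prove the lemma.
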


ln the next section, we  prove Theorem \ref{thm:mainresult}
assuming these lemmas.  The proofs of the lemmas will be presented in the sections that follow.

\section{Proof of Theorem \ref{thm:mainresult}}
In this section, we  assume all lemmas from the previous section are true. We are going to use
a variant of an argument from \cite{CTV}. Let $B_0$ be the event
that $G_n$ is $(s-1)-$saturated.  Let $B_1$ be the event that the
rank of $Q_{n^{\prime}}$ is at least $n^{\prime}(1-{\frac{1-\delta
}{4 \delta}})$.  Let $B_2$ be the event that $Q_m$ is good for all
$n^{\prime}\leq m < n$.  By Bayes' theorem we have
\begin{equation*}
{\hbox{\bf P}}(B_0) \leq {\hbox{\bf P}}(B_0 \wedge B_2 | B_1) +
{\hbox{\bf P}}(\neg B_1) + {\hbox{\bf P}}(\neg B_2)
\end{equation*}
By Lemma \ref{lemma:nearfullrank} we have that ${\hbox{\bf P}}(\neg
B_1)=o(e^{-\gamma n \ln n})$ and by Lemma \ref{lemma:goodmatrices}
we have that ${\hbox{\hbox{\bf P}}(\neg B_2)=O(n^{1-sc\delta
+\ep})}$. Both of these probabilities are much smaller than the
bound $O((\ln \ln n)^{-1/4})$ which  we are trying to prove, so it only
remains to bound the first term.

Let $U_m$ denote the size of the largest $(s-1)-$unobstructed subset
of the vertices of $G_m$.

Let $Y_m=U_m-\,\, \hbox{rank}(Q_m)$.  Our goal is now to prove that $Y_n$ is almost surely 0.
Define a random variable $X_m$ as follows:
\begin{itemize}
\item  $X_m=4^{Y_m}$ if $Y_m>0$ and every $Q_j$ with $n' \leq j
\leq m$ is good;
\item  $X_m=0$ otherwise.
\end{itemize}
The core of the proof is the following  bound on  the expectation of
$X_{m+1}$ given any fixed sequence  $\CQ_m$ of matrices $\{Q_{n'},
Q_{n'+1}, \dots, Q_m\}$ encountered in the augmentation process.
\begin{lemma} \label{lemma:expectationofX} For any sequence $ \CQ_m=
\{Q_{n'}, Q_{n'+1}, \dots, Q_m\}$ encountered in the augmentation
process,
\begin{equation*}
{\hbox{\bf E}} (X_{m+1} |  \CQ_m  ) < \frac{3}{5} X_m + O((\ln
\ln n)^{-1/4}).
\end{equation*}
\end{lemma}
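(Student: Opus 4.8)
The plan is to condition on the sequence $\CQ_m$ and split into cases according to the value of $X_m$. If $X_m = 0$ because some $Q_j$ with $n' \le j \le m$ was not good, then $X_{m+1} = 0$ as well (the condition ``every $Q_j$ good'' is inherited), and the inequality holds trivially. So we may assume every $Q_j$, $n' \le j \le m$, is good. There are then two subcases. First, if $Q_m$ is not good --- but this cannot happen under our assumption, so really the only remaining distinction is whether $Q_{m+1}$ is good or not, and whether $Y_m > 0$ or $Y_m = 0$.

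The main step is the case $Y_m > 0$, so $X_m = 4^{Y_m}$ and $Q_m$ is good but not $(s-1)$-saturated. I would first observe that $U_{m+1} \le U_m + 1$ always (adding one vertex increases the largest unobstructed set by at most one) and $U_{m+1} \ge U_m$ (an unobstructed set in $G_m$ remains unobstructed in $G_{m+1}$, since adding a vertex cannot destroy non-expansion from above --- one must check the new vertex does not shrink a neighborhood, which it does not, as $N(S)$ can only grow). Similarly $\rank(Q_{m+1}) \ge \rank(Q_m)$, and $\rank(Q_{m+1}) - \rank(Q_m) \in \{0,1,2\}$ since we add one row and one column. Hence $Y_{m+1} = U_{m+1} - \rank(Q_{m+1}) \le Y_m + 1$, and $Y_{m+1} \le Y_m - 1$ whenever $\rank(Q_{m+1}) - \rank(Q_m) = 2$ and $U_{m+1} = U_m$. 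The key input is Lemma \ref{lemma:singaug}: conditioned on $(Q_m, Q_{m+1})$ being normal and $Q_m = Q$, the rank increases by $2$ with probability $1 - O((kp)^{-1/2})$. On the normal event one also needs that $U_{m+1} = U_m$; this is where the definition of ``normal'' (the new vertex is not adjacent to any vertex of a non-nearly-nice subset) should be used to argue that no new small unobstructed set is created relative to the rank gain --- more precisely, that the rank-$2$ gain is not ``wasted'' against an increase in $U$. Putting these together, when $(Q_m,Q_{m+1})$ is normal and the rank jumps by $2$, we get $X_{m+1} \le 4^{Y_m - 1} = \tfrac14 X_m$; on the normal-but-rank-gain-$<2$ event (probability $O((kp)^{-1/2})$) we get $X_{m+1} \le 4^{Y_m+1} = 16 X_m$; and I must separately bound the contribution of the event that $(Q_m, Q_{m+1})$ is \emph{not} normal. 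For the non-normal event, the new vertex must attach to one of the boundedly many vertices lying in non-nearly-nice subsets; using goodness (property 2, which forces such subsets to be either huge, of size $\ge k+1$, or tiny non-expanding sets) together with a union bound over the $O(1)$ bad vertices each hit with probability $O(p \cdot m) = O(1)$... here I need to be more careful: the probability the new column is nonzero in a given row is $p$, so the expected number of attachments to a set of $t$ vertices is $tp$, and I want this times $X_m$ to be absorbed. Combining, $\E(X_{m+1}\mid\CQ_m) \le (\tfrac14 + 16\cdot O((kp)^{-1/2}) + O(p))X_m + (\text{non-normal correction})$, and since $kp = \tfrac{\ln\ln n}{2} \to \infty$ and $p = o(1)$, the coefficient is below $\tfrac35$ for $n$ large.

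For the case $Y_m = 0$, we have $X_m = 0$, so we need $\E(X_{m+1} \mid \CQ_m) = O((\ln\ln n)^{-1/4})$ outright. Here $Q_m$ is good and $(s-1)$-saturated, so $X_{m+1}$ is nonzero only if $Y_{m+1} = 1$, i.e. $U_{m+1} = U_m + 1$ but $\rank(Q_{m+1}) = \rank(Q_m) + 1$ (a gain of $2$ is impossible since $U$ rose by only $1$ from a tight configuration --- wait, one must check $\rank$ cannot rise by $2$ while $U$ rises by $1$ from saturation; if it did, $Y$ would become $-1$, which contradicts $Y_{m+1} = U_{m+1} - \rank \ge 0$ only if $U_{m+1} \ge \rank(Q_{m+1})$, so actually this forces $\rank(Q_{m+1}) - \rank(Q_m) \le 1$ here). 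By Lemma \ref{lemma:nonsingaug}, conditioned on normality the probability of a rank gain $< 2$ is $O((kp)^{-1/4})$; combined with the bound on the non-normal event this gives $\E(X_{m+1}\mid \CQ_m) \le 4 \cdot O((kp)^{-1/4}) + O(p) = O((\ln\ln n)^{-1/4})$, as desired.

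I expect the main obstacle to be the bookkeeping around the ``normal'' condition and the variable $U_m$: namely, showing cleanly that on the normal event a rank increase of $2$ actually decreases $Y$, which requires relating the combinatorics of $(s-1)$-unobstructed sets (and non-nearly-nice sets) in $G_m$ versus $G_{m+1}$ to the algebraic rank jump, and separately handling the non-normal event where the new vertex lands on a vertex of an existing small non-expanding set --- here goodness (properties 1 and 2) is exactly what controls the structure of the minimal non-nice and non-nearly-nice sets, but translating that into a probability bound of the right order $O((\ln \ln n)^{-1/4})$ will need the $O(1/(p\ln n))$ bound on low-degree vertices from property 3 and careful use of $k = \tfrac{\ln\ln n}{2p}$.
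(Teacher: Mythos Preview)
Your overall strategy---split on whether $Y_m=0$ or $Y_m>0$, use Lemmas \ref{lemma:singaug} and \ref{lemma:nonsingaug} on the ``normal'' event, and handle the non-normal event separately---matches the paper. But there is a genuine gap: the claim ``$U_{m+1}\le U_m+1$ always'' is false. When the new vertex $m+1$ attaches to several vertices that were previously in non-expanding subsets of size at most $s-1$ (for instance, several isolated vertices when $s=2$), all of those vertices can simultaneously become eligible for the largest $(s-1)$-unobstructed set, so $U_{m+1}-U_m$ can be arbitrarily large in principle. Consequently your bound on the non-normal event is wrong: the contribution there is not $O(p)\cdot X_m$ but something like $\E\bigl(4^{\text{(jump in }U)}\chi(\text{non-normal})\bigr)\cdot X_m$, which could be huge without a further argument. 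The same issue infects your $Y_m=0$ case, where $Y_{m+1}$ need not be at most $1$.

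What the paper does instead is introduce $Z_m$, the number of vertices of degree at most $s$ in $G_m$ that are adjacent to the new vertex, and prove the deterministic claim $U_{m+1}-U_m\le Z_m+1$: take a maximum $(s-1)$-unobstructed set $S_{m+1}$ in $G_{m+1}$, delete the new vertex and all its low-degree neighbors, and check the result is still $(s-1)$-unobstructed in $G_m$. The event ``$(Q_m,Q_{m+1})$ normal'' is exactly $\{Z_m=0\}$, which recovers $U_{m+1}\le U_m+1$ on that event. For the complementary event the paper uses goodness property~3 (at most $\tfrac{1}{p\ln n}$ vertices of degree $<s$) to stochastically dominate $Z_m$ by a $\mathrm{Bin}(\tfrac{1}{p\ln n},p)$ variable and compute
\[
\E\bigl(4^{Z_m+1}\chi(Z_m>0)\mid\CQ_m\bigr)\le\sum_{i\ge1}4^{i+1}(\ln n)^{-i}=O((\ln n)^{-1}),
\]
which is what makes the $\tfrac35 X_m$ and $O((\ln\ln n)^{-1/4})$ bounds go through. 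Your sketch identifies property~3 as relevant but does not use it in this way; in particular your ``$O(1)$ bad vertices each hit with probability $O(p)$'' heuristic both miscounts the bad vertices and ignores that the \emph{number} of hits, not just the event of a hit, controls the jump in $X$.
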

Let us (for now) assume Lemma \ref{lemma:expectationofX} to be true.
This lemma together with Bayes theorem shows that for $n^{\prime}<m$
we have
\begin{equation*}
{\hbox{\bf E}}(X_{m+1} | Q_{n'}) < {\frac{3 }{5}}{\hbox{\bf E}}(X_m |
Q_{n'})+O((\ln \ln n)^{-1/4}).
\end{equation*}
By induction on $m_2-m_1$ we now have that for any $m_2\geq m_1 \geq
n^{\prime}$
\begin{equation*}
{\hbox{\bf E}}(X_{m_2} | Q_{n'})<(\frac{3}{5})^{m_2-m_1} {\hbox{\bf E}}%
(X_{m_1} | Q_{n'})+O((\ln \ln n)^{-1/4}).
\end{equation*}
In particular, by taking $m_2=n$ and $m_1=n^{\prime}$ we get that
\begin{equation*}
{\hbox{\bf E}}(X_n | Q_{n'})<(\frac{3}{5})^{n-n'} X_{n^{\prime}} +
O((\ln \ln n)^{-1/4}).
\end{equation*}
\noindent If $Q_{n'}$ satisfies $B_1$, we automatically have $X_{n'}
\leq 4^{\frac{(1-\delta)n'}{4 \delta}}=(\sqrt{2})^{n-n'}$, so
\begin{equation*}
{\hbox{\bf E}}(X_n | Q_{n'}) <(\frac{3 \sqrt{2}}{5})^{n-n'} + O((\ln
\ln n)^{-1/4})=O((\ln \ln n)^{-1/4}).
\end{equation*}
\noindent By Markov's inequality, for any $Q_{n'}$ satisfying $B_1$
\begin{equation*}
{\hbox{\bf P}}(X_n>3 | Q_{n'})=O((\ln \ln n)^{-1/4})
\end{equation*}
\noindent On the other hand, by definition $X_n\ge 4$ if $G_n$ is
not $(s-1)-$saturated and $B_2$ holds. It thus follows by summing
over all $Q_{n'}$ satisfying $B_1$ that
\begin{equation*}
{\hbox{\bf P}}(B_0 \wedge B_2 | B_1) =O((\ln \ln n)^{-1/4}),
\end{equation*}
\noindent proving the theorem. \vskip2mm It remains to prove Lemma
\ref{lemma:expectationofX}. If a matrix in the sequence $\CQ_m=
\{Q_{n'}, Q_{n'+1}, \dots, Q_m\}$ is not good, then $X_{m+1}=0$ by
definition and there is nothing to prove. Thus, from now on we can
assume that all matrices in the sequence are good. Let $Z_m$ denote
the number of vertices of degree at most $s$ in $G_{m}$ adjacent to
the $m+1^{st}$ vertex of $G_{m+1}$.

\textbf{Claim:} $U_{m+1}-U_m \leq Z_m+1$.

\begin{proof}(of claim): Let $S_{m+1}$ denote a $s-$unobstructed subset of the vertices of $G_{m+1}$ such that $|S_{m+1}|=U_{m+1}$.  Let $S_{m}'$ denote the set formed by removing the $m+1^{st}$ vertex from $S$, as well as any vertices of degree at most $s$ adjacent to that new vertex.

$S_m'$ is $s-$unobstructed since each subset of $S_m'$ of size at most $s$ either contains a vertex of degree at least $s+1$ (in which case it clearly expands) or has the same neighborhood in $G_m$ as in $G_{m+1}$.  Since at most $Z_m+1$ vertices were removed to go from $S_{m+1}$ to $S_m'$, the claim follows.
\end{proof}

By the above claim, if $Z_m$ is positive, then
augmenting the matrix will increase $Y_m$ by at most $Z_m+1$ ($U_m$ increases by at most $Z_m+1$ and the rank does not
decrease). Furthermore, $Z_m=0$ if and only if $(Q_m, Q_{m+1})$ is
normal. By Bayes' theorem, we have
\begin{eqnarray*}
{\hbox{\bf E}}(X_{m+1}| \CQ_m) &=& {\hbox{\bf E}} (X_{m+1}\chi( Z_m >0)| \CQ_m)
   \\&\,\,+&{\hbox{\bf E}}
(X_{m+1}| \CQ_m \wedge (Q_m, Q_{m+1}) \, \, \hbox{is normal})
{\hbox{\bf
P}} ((Q_m, Q_{m+1}) \, \, \hbox{is normal}| \CQ_m)  \\
&\leq& {\hbox{\bf E}} (X_{m+1} \, \chi (Z_m>0)| \CQ_m) +{\hbox{\bf
E}} (X_{m+1}| \CQ_m \wedge (Q_m, Q_{m+1}) \, \, \hbox{is normal})
\\
&=&  {\hbox{\bf E}} (4^{Z_m +1+Y_m} \chi (Z_m>0) |\CQ_m) +
{\hbox{\bf E}} (X_{m+1}| \CQ_m \wedge (Q_m, Q_{m+1}) \, \, \hbox{is
normal}) .
\end{eqnarray*}
Since $Q_m$ is good, $G_m$ has at most $\frac{1}{p \ln n}$ vertices
which have degree at most $s$. Thus, we can bound
$Z_m$ by the sum of $\frac{1}{p \ln n}$ random Bernoulli variables,
each of which is 1 with probability $p$. It follows that
\begin{equation*}
\hbox{\bf P} (Z_m = i) \leq \binom{(p \ln n)^{-1}}{i} p^{i} \leq
(\ln n)^{-i}.
\end{equation*}
Adding up over all $i$, we have
\begin{equation*}
\hbox{\bf E} (4^{Z_m + 1} \chi(Z_m>0 )|\CQ_m) \leq
\sum_{i=1}^{\infty} 4^{i+1} (\ln n)^{-i} = O(({\ln n})^{-1}).
\end{equation*}
If $Y_m=0$ and $(Q_m, Q_{m+1})$ is normal, then  by Lemma
\ref{lemma:nonsingaug} (which applies since $Q_m$ is good) $X_{m+1}$
is either 0 or 4, with the probability of the latter being $O((\ln
\ln n)^{-1/4})$. Therefore we have for any sequence $\CQ_m=
\{Q_{n'}, \dots Q_m\}$ of good matrices with $Y_m=0$ that
\begin{equation} \label{fullrankexpec}
{\hbox{\bf E}}(X_{m+1} | \CQ_m)=O((\ln \ln n)^{-1/4}+(\ln
n)^{-1})=O((\ln \ln n)^{-1/4}).
\end{equation}
If $Y_m=j>0$ and $(Q_m, Q_{m+1})$ is normal, then $Y_{m+1}$ is $j-1$
with probability $\newline 1-O((\ln \ln n)^{-1/2})$ by Lemma
\ref{lemma:singaug}, and otherwise is at most $j+1 $.  Combining
this with the bound on $\hbox{\bf E} (4^{Z_m + 1} \chi(Z_m>0
)|\CQ_m)$ we have
\begin{equation} \label{nonfullrankexpec}
{\hbox{\bf E}} (X_{m+1}|\CQ_m)=4^{j-1}+4^{j+1}
O((\ln \ln n)^{-1/2})+4^j O((\ln n)^{-1}) \leq {\frac{3 }{5}%
}4^j
\end{equation}
The lemma now follows immediately from  (\ref{fullrankexpec}) and
(\ref{nonfullrankexpec}).

\section{Proof of Lemma \protect\ref{lemma:nearfullrank}}
We use a variant of an argument from \cite{CV}.  By symmetry and the union bound
  $$\P( \text{rank%
}(Q_{n^{\prime}})<(1-\epsilon) n^{\prime}) \le
\binom{n^{\prime}}{\epsilon n^{\prime}} \times \P(B_1^*),$$ where
$B_1^*$ denotes the event  that the last $\epsilon n^{\prime}$
columns of $Q_n^{\prime}$ are contained in the span of the remaining
columns.

We view $Q_{n'}$ as a block matrix,
$$Q_{n'}=\left[ \begin{array}{c|c} A & B \\ \hline B^T & C \\ \end{array}
\right],$$ where $A$ is the upper left $(1-\ep)n' \times (1-\ep)n'$
sub-matrix and $C$ has dimension $\ep n' \times \ep n'$.  We obtain
an upper bound on $\P(B_1^*)$ by bounding the probability of $B_1^*$
conditioned on any fixed $A$ and $B$ (treating $C$ as random).

$B_1^*$ cannot hold unless the columns of $B$ are contained in the
span of those of $A$, meaning the equation $B=A F$ holds for some (not necessarily unique)
matrix $F$.  If this is the case, then $B_1^*$ will hold only when
we also have $C=B^T F$. This means that each entry of $C$ is forced
by our choice of $A$, $B$ and our assumption that $B_1^*$ holds.

However, $C$ is still random, and the probability that any given
off-diagonal entry takes on its forced value is at most $1-p$ (this being the probability that the entry is 0).  The entries are
not all independent (due to the symmetry of $C$), but those
above the main diagonal are. Therefore the probability that $B_1^*$
holds for any fixed $A$ and $B$ is at most $(1-p)^{\frac{(\ep
n')^2}{2}}$.

We therefore have

\begin{eqnarray*}
{\hbox{\bf P}} (\text{rank}(Q_{n^{\prime}})<(1-\epsilon) n^{\prime})
&\leq& \binom{n^{\prime}}{\epsilon n^{\prime}}
((1-p)^{\frac{(\epsilon n^{\prime})^2 }{2}}) \\
&\leq& ({\frac{n^{\prime}e }{\epsilon n^{\prime}}})^{\epsilon
n^{\prime}} e^{\frac{-
p (\epsilon n^{\prime})^2 }{2}} \\
&\leq& {c_2}^{n} e^{-c_1 n \ln n}.
\end{eqnarray*}

where $c_1$ and $c_2$ are positive constants depending on
$\epsilon$, $\delta $, and $c$ (but independent of $n$).

\section{Proof of Lemma \ref{lemma:separation}}
If $p$ is at least $(\ln n)^2 / n$ then $G_m$ will with
probability at least $1-o(1/n^3)$ have no vertices with degree at
most $\ln \ln n$, in which case the lemma is trivially  true.
Therefore we can assume $p \leq (\ln n)^2 / n$

If $G_{m}$ fails to be well separated for some $m$ between $n'$ and
$n$ there must be a first $m_0$ with this property. We are going to
bound the probability that a fixed $m$ is this $m_0$.

Case 1: $m_0=n'$.  We can bound the probability $G_{n'}$ fails
condition W1 by the union bound over all sets of at most $5s$
vertices of the probability that those vertices form a connected
subgraph with at least $s$ small-degree vertices.

The probability that any single vertex has sufficiently small degree
is at most
\begin{equation} \label{eqn:lowdegree}
\sum_{i=0}^{\ln \ln n} \binom{n'-1}{i} p^i (1-p)^{{n'}-i} \leq
(1+o(1))\sum_{i=0}^{\ln \ln n} (n'p)^i (1-p)^{n'} \leq \frac{(\ln
n)^{2\ln \ln n}}{n^{c \delta}},
\end{equation}

so the probability that a set of size $i$ contains at least $s$ such
vertices is at most
\begin{equation*}
\binom{i}{s} n^{-s c \delta + \ep}.
\end{equation*}

The probability that a set of size $i$ is connected is (by the union
bound over all spanning trees) at most
\begin{equation*}
i^{i-2} p^{i-1} \leq \frac{(\ln n)^{2i}}{(n')^{i-1}}
\end{equation*}

By the FKG inequality \cite{FKG} these two events are negatively correlated (as one is monotone increasing under edge inclusion, while the other is monotone decreasing), so
the probability that some subset fails the first well-separation
criterion is at most
\begin{equation} \label{wellsepfailure1}
\sum_{i=s}^{5s} \binom{n'}{i} \binom{i}{s} n^{-s c \delta +\ep}
\frac{(\ln n)^{2i}}{(n')^{i-1}} = O(n^{1-sc \delta +\ep}).
\end{equation}

Similarly, for each $1 \leq j \leq 12s$ the probability that a given
set of size $j$ contains a spanning cycle is by the union bound at
most
\begin{equation*}
\frac{(i-1)!}{2} p^i \leq \frac{(i-1)! (\ln n)^{2i}}{n^i}
\end{equation*}

and by the FKG inequality this event is negatively correlated with
the set containing a vertex of degree at most $\ln \ln n$ in $G$.
Therefore the probability some set fails W2 is at most
\begin{equation} \label{wellsepfailure2}
\sum_{i=1}^{12s} \binom{n'}{i}\frac{(i-1)! (\ln n)^{2i}}{n^i}
\frac{(\ln n)^{2 \ln \ln n}}{n^{c \delta}}=O(n^{-c \delta + \ep})
\end{equation}

Case 2: $m_0=m>n'$.  In this case we can bound the probability that
$m_0=m$ by the probability  that $G_{m-1}$ is well separated but
$G_m$ fails to be well separated.  As in the previous case we can
take a union bound over all sets of at most $5s$ vertices, but now
we need only consider sets which contain the vertex newly added to
create $G_m$ (all other sets are covered by our assumption that
$G_{m-1}$ is well separated).  This means that the probability of
failure of either requirement for any particular $m$ in this range
is at most $12s/n$ times the corresponding union bound in
\eqref{wellsepfailure1} and \eqref{wellsepfailure2}, which is $O(n^{-s
c\delta + \ep})$.

By the union bound, the property that $G_{m}$ is not well -separated for some $m$ is at most

$$O(n^{1-sc \delta +\ep}) + O(n^{-c \delta + \ep}) + n \times O(n^{-s
c\delta + \ep})=  O(n^{1-sc \delta +\ep}) , $$

\noindent completing the proof.

\section{Proof of Lemma \ref{lemma:expansion}}

In order to prove the edge expansion property we first show that
almost surely all small subgraphs of $G(n,\frac{c \ln n}{n})$ will
not have too many edges.

\begin{definition} \label{def:locally sparse}
A graph $G$ is {\bf locally sparse} if every subgraph on at
most $\frac{n}{\ln^{3/2} n}$ vertices has average degree less than
8.
\end{definition}

\begin{lemma} \label{lemma:sparseness}

For fixed $c$ the probability that $G(n,\frac{c \ln n}{n})$ is not
locally sparse is $O(n^{-4})$.
\end{lemma}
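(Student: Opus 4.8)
The plan is to bound, via the union bound over all vertex subsets $S$ with $|S| = i \le n/\ln^{3/2} n$, the probability that $G(S)$ has average degree at least $8$, i.e.\ that $S$ spans at least $4i$ edges. For a fixed set of size $i$ the number of potential edges is $\binom{i}{2} \le i^2/2$, and each is present independently with probability $p = c\ln n/n$, so the probability that at least $4i$ of them appear is at most $\binom{\binom{i}{2}}{4i} p^{4i} \le \left( \frac{e i^2 p}{8i} \right)^{4i} = \left( \frac{e i p}{8} \right)^{4i}$. Multiplying by the number $\binom{n}{i} \le (en/i)^i$ of choices of $S$, the contribution of size-$i$ sets to the failure probability is at most $\left( \frac{en}{i} \right)^i \left( \frac{eip}{8} \right)^{4i} = \left[ \frac{e^5 n p^4 i^3}{8^4 n \cdot \ldots} \right]$ — more cleanly, it is at most $\left( \frac{e^5 n i^3 p^4}{8^4} \right)^{i} \cdot i^{-i}$, which after substituting $p = c\ln n/n$ becomes $\left( C \frac{i^3 (\ln n)^4}{n^3} \right)^{i}$ for an absolute constant $C = e^5 c^4/8^4$.

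Now I would split the sum over $i$ into two ranges. For the "small" range, say $1 \le i \le n^{1/2}$ (any fixed exponent strictly between $0$ and $1$ works), the bracket $C i^3 (\ln n)^4 / n^3$ is at most $C (\ln n)^4 / n^{3/2} = o(1)$, so the size-$i$ term is at most, say, $n^{-i}$ for $n$ large, and summing the geometric series gives a total of $O(n^{-4})$ once we note the $i=1,2,3,4$ terms need to be checked directly (the $i=1$ term is trivially $0$ since a single vertex spans no edges, and for $i \ge 5$ the bound $n^{-i} \le n^{-5}$ already suffices; a slightly more careful treatment of $i \le 4$ using the exact bound $\left(Ci^3(\ln n)^4/n^3\right)^i$ shows each such term is $O(n^{-3+o(1)} \cdot \ldots)$ — in fact already $i=2$ gives $O((\ln n)^8/n^6)$, so these are harmless). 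For the "large" range $n^{1/2} < i \le n/\ln^{3/2} n$, I use $i^3 \le (n/\ln^{3/2} n)^3 = n^3/\ln^{9/2} n$, so the bracket is at most $C (\ln n)^4 / \ln^{9/2} n = C/\ln^{1/2} n = o(1)$, hence each term is at most $2^{-i} \le 2^{-n^{1/2}}$, and the whole range contributes at most $n \cdot 2^{-n^{1/2}} = o(n^{-4})$.

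Combining the two ranges, the probability that $G(n, c\ln n/n)$ fails to be locally sparse is $O(n^{-4})$, as claimed. The only mildly delicate point — and the place to be careful — is the bookkeeping for the very smallest values of $i$: the crude bound "each term $\le n^{-i}$" is what pins the final exponent, so one should verify that the constant $C$ and the $(\ln n)^4$ factor really are absorbed for all $i \ge 1$ (equivalently, that $C i^3 (\ln n)^4/n^3 \le n^{-1}$ whenever $i \le n^{1/2}$, which holds for all large $n$); everything else is a routine geometric-series estimate.
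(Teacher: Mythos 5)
Your proof is correct and takes essentially the same approach as the paper: a union bound over vertex subsets, the estimate $\left(C\,i^{3}\,(\ln n)^{4}/n^{3}\right)^{i}$ for the size-$i$ contribution, and a split into small and large ranges of $i$. The paper simply uses the threshold $i<n^{1/4}$ instead of $n^{1/2}$, which makes the bracket at most $n^{-2}$ so that each small-range term is at most $n^{-2i}$ and $\sum_{i\ge 2} n^{-2i}=O(n^{-4})$ immediately, avoiding the separate bookkeeping for $i=2,3,4$ that your looser threshold (per-term bound only $n^{-i}$, summing to $O(n^{-2})$) forces you to carry out.
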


\begin{proof}
Let $q_j$ be the probability that a subset of size $j$ induces at least
$4j$ edges. By the union bound, this is at most $\binom{n}{j}$ times
the probability of a particular subset inducing at least $4j$ edges,
so

\begin{eqnarray*}
q_j &\leq& \binom{n}{j} \binom{j^2 /2}{4j} p^{4j} \\
&\leq& (\frac {ne}{j})^j  (\frac{e j c \ln n}{8 n})^{4j} \\
&\leq& (\frac{c^4 e^5 j^3 \ln ^4 n }{n^3})^{j} .\\
\end{eqnarray*}

For $j<n^{1/4}$ this gives $q_j \leq n^{-2j}$, while for $j>n^{1/4}$
we have (using our upper bound on $j$) $q_j \leq (\ln n)^{-j/2}
=o(n^{-5})$.  By adding up over all $j$ at least 2, we conclude that
the failure probability is $O(n^{-4})$, completing the proof.
\end{proof}

Armed with this lemma we can now prove Lemma \ref{lemma:expansion}, which we do in two cases depending on the value of $p$.

Case 1: $p \geq \frac{12 \ln n}{n}$: We estimate the probability
that there is a non-expanding small set directly by using the
union bound over all sets of size $i<n \ln^{-3/2} n$. The
probability in question can be bounded from above by

\begin{eqnarray*}
\sum_{i=1}^{n \ln^{-3/2} n}
\binom{n}{i}\binom{i(n-i)}{i-1}(1-p)^{i(n-i)-(i-1)} &\leq&
\sum_{i=1}^{n \ln^{-3/2} n} n^i
(en)^{i-1} e^{-i n p (1+o(1))} \\
&=& \frac{1}{en} \sum_{i=1}^{n \ln^{-3/2} n} (n^2
e^{-np(1+o(1))})^i.
\end{eqnarray*}

The lower bound on $p$ guarantees that the summand is
$O(n^{-(4+o(1))i})$, so the probability for any $p$ in this range is
$o(\frac{1}{n^4})$.

Case 2: $p<\frac{12 \ln n}{n}$.

If $G_m$ fails to expand edgewise there must be a minimal subset
$S_0$ which both fails to expand and contains no non-expanding
subset of size at most $s-1$.

We claim that the subgraph formed by the vertices of $S_0$ must have
average degree at least 8.  For the sake of contradiction, let us suppose that this were not the case.
Because fewer than $|S_0|$ edges leave $S_0$, it follows that the
average degree of the vertices of $S_0$ in $G$ is at most 9, meaning
that at most $\frac{9|S_0|}{\ln \ln n}$ vertices in $S_0$ have
degree at least $\ln \ln n$ in $G$.

We next consider the connected components of the induced subgraph of
$G$ on the vertices in $S_0$.  Unless $G$ fails to satisfy condition
W1 (in which case we are done), at most $\frac{9(s-1) |S_0|}{\ln \ln
n}$ of the vertices with degree at most $\ln \ln n$ can be in the
same component as a vertex with degree at least $\ln \ln n$.

Furthermore, the remaining vertices must be in components of size at
most $s-1$ (again due to W1).  Let $T$ be one of those components.
By assumption, $T$ has at least $|T|$ edges leaving $T$, and each of
these edges must also leave $S_0$.  But this implies that $S_0 - T$
is a smaller edgewise non-expanding set, a contradiction.

What we have actually shown in the above is the following \textit{deterministic} statement: any locally sparse, well-separated graph must also be a small set expander.  We can therefore bound the probability of the existence of a minimal
$S_0$ by the probability of $G$ failing to be locally sparse, which
by Lemma \ref{lemma:sparseness} is $O(n^{-4})$.

\section{Proof of Lemma
\protect\ref{lemma:goodmatrices}} Let $C_0$ be the event that $G_m$
is good for every $m$ between $n'$ and $n$. Let $C_1$ be the event
that $G_m$ has at most $\frac{1}{p \ln n}$ vertices of degree less
than $s$ for every $m$ between $n'$ and $n$, $C_2$ be the event that
$G_m$ has maximum degree at most $5 k n p$ for each $m$, and $C_3$
be the event that $G_m$ is well separated, locally sparse, and a
small set expander for every $m$ between $n'$ and $n$. We have
$${\hbox{\bf P}}(\neg C_0) \leq {\hbox{\bf P}}(\neg C_0 \wedge C_1 \wedge
C_2 \wedge C_3)+ {\hbox{\bf P}}(\neg C_1) + {\hbox{\bf P}}(\neg C_2)
+ {\hbox{\bf P}}(\neg C_3).$$  We are going to bound each term on the
right hand side separately, in reverse order.

Lemmas \ref{lemma:expansion}, \ref{lemma:separation}, and
\ref{lemma:sparseness}  together show that ${\hbox{\bf P}}(\neg
C_3)=O(n^{1-s c\delta +\ep})$.

${\hbox{\bf P}}(\neg C_2)$ is at most the expected number of
vertices of degree at least $5 k n p$ in $G_n$, which is at most
$$n \binom{n}{5 k n p} p^{5 k n p} \leq n (e/5 k n p )^{5 k np}
p^{5 k n p} \leq n e^{-5 k n p} =o(n^{-4}).$$

To bound $\P( \neg C_1)$, we note that the probability that some
$G_m$ contains a set of vertices of degree less than $s$ of  size $t=
p^{-1} \ln^{-1} n$ is bounded from above by the probability that at
least $t$ vertices in $G_n$ each have at most $s$ neighbors amongst
the vertices of $G_{n'}$.  This probability is clearly decreasing in $p$, and for $p=\frac{\ln n}{sn}$ it is by Markov's inequality at most
\begin{eqnarray*}
\frac{n}{t} \sum_{i=1}^{s} \binom{n'}{i} p^i (1-p)^{n'-i}&<& n \frac{(\ln n)^2}{sn} \sum_{i=1}^s n^i \frac{(\ln n)^i}{(sn)^i} e^{-(1+o(1)) \ln n' / s} \\
&=& O((\ln n)^2 n^{-1/s} \sum_{i=1}^s (\ln n)^i) = n^{-1/s+o(1)}
\end{eqnarray*}

It remains to estimate the first term, which we will do by the union
bound over all $m$.  Since property $C_1$ implies $G_m$ has few
vertices of small degree, it suffices to estimate the probability
that $G_m$ contains a non-nice set while still satisfying properties
$C_1$, $C_2$, and $C_3$. Let $p_{j}$ be the probability that
conditions $C_1, C_2$, and $C_3$ hold but some subset of $j$
vertices causes $G_m$ to fail to be good. Symmetry
and the union bound give that $p_j$ is at most $\binom{%
m}{j}$ times the probability that the three conditions hold and some
fixed set $S$ of $j$ vertices causes the graph to fail condition
$C_0$. We will bound this in three cases depending on the size of
$j$.

Recall that we defined $k=\frac{\ln \ln n}{2p}$.  Note that by our lower bound on $p$ we have $k=o(n)$.

Case 1: ${\frac{1 }{p \sqrt{\ln n}}}\leq j \leq k.$

We will show that there are almost surely no non-nice subsets at all
in this range, minimal or otherwise.  We can give an upper bound on the probability that a particular set $S$ of $j$ vertices fails to be nice by the probability that no vertex outside $S$ has either 0 or 1 neighbors in $S$ (we restrict our search to vertices outside $S$ so we do not have to worry about the presence or absence of self-loops).

Direct computation of the
probability that a fixed set of $j$ vertices has either 0 or 1
vertices adjacent to exactly one vertex in the set gives:
\begin{eqnarray*}
p_j &\leq& \binom{m}{j} (
(1-j p(1-p)^{j-1})^{m-j}+m j p(1-p)^{j-1}(1-j p(1-p)^{j-1})^{m-j-1})\\
&\leq& (m e p \sqrt{\ln n})^j ((1-j p(1-p)^{j-1})^{m-j}+m j
p(1-p)^{j-1}(1-j
p(1-p)^{j-1})^{m-j-1}) \\
&\leq& (m e p \sqrt{\ln n})^j ((1-j p e^{-j p(1+o(1))})^{m-j}+m j p
e^{-jp(1+o(1))} (1-j p e^{-j p
(1+o(1))})^{m-j-1}) \\
&\leq& ((1+o(1)) m e p \sqrt{\ln n})^j (e^{-m j p(1+o(1)) e^{-j p
(1+o(1))}})(1+(1+o(1))m j p e^{-j p (1+o(1))}).
\end{eqnarray*}
It follows from our bounds on $j$ and $p$ that $m j p e^{-jp}$ tends
to infinity, so the second half dominates the last term of the above
sum and we have:
\begin{equation*}
p_j \leq (1+o(1))(m e p \sqrt{\ln n})^j (e^{-m j p(1+o(1)) e^{-j p
(1+o(1))}})(2 m j p e^{-j p (1+o(1))}).
\end{equation*}
Taking logs and using $\delta n \leq m \leq n$ gives:
\begin{eqnarray*}
\ln(p_j) &\leq& (1+o(1))j (\ln (e n p \sqrt{\ln n}) - \delta n p
e^{-j p
(1+o(1))} - p + {\frac{\ln (2 n j p) }{j}}) \\
&\leq& (1+o(1))j (4\ln(np) - \delta n p e^{-k p (1+o(1))}) \\
&=& (1+o(1)) j (4 \ln (np) - {\frac{\delta n p }{(\ln n)^{{\frac{1 }{2}%
}+o(1)}}}).
\end{eqnarray*}
Since $n p>\frac{\ln n}{s}$, taking $n$ large gives that the
probability of failure for any particular $j$ in this range is
$o(1/n^4)$, and adding up over all $j$ and $m$ gives that the
probability of a failure in this range is $o(1/n^2)$.

Case 2: $1 \leq j \leq {\frac{1 }{p \sqrt{\ln n}}}$.

Let $b$ be the number of vertices outside $S$ adjacent to at least one vertex in
$S$, and let $a$ be the number of edges between $S$ and the vertices
of $G$ outside $S$.  For $j \geq s$ let $E_j$ be the event that the graph
satisfies conditions $C_1$ through $C_3$ but some fixed set $S$ of
$j$ vertices fails to be nice.

By Bayes' theorem we have
\begin{equation*}
p_j \leq \binom{m}{j} \sum_{w=0}^{nj} \P(E_j | a=w) \P(a=w)
\end{equation*}
We bound the terms in two subcases depending on the size of $w$
relative to $j$.

Case2a: $w<10j$. The claim here is that in this range it is
impossible for $E_j$ to occur.

Let $G^2$ denote a graph with on the same vertex set as $G$, but with $i$ connected to $j$ in $G^2$ iff $i$ and $j$ are of distance at most 2 in $G$.

As in Lemma \ref{lemma:expansion}, if $G$ is  locally sparse then
$S$ must have at most $\frac{18j}{\ln \ln n}$ vertices of degree at
most $\ln \ln n$.  Condition W1 now implies that at most
$\frac{18sj}{\ln \ln n}$ vertices of $S$ can lie in the same
component of the induced subgraph of $G^2$ on $S$ as a vertex of
degree at least $\ln \ln n$.  Thus the induced subgraph of $G^2$ must contain a component $S_1$ which does not contain a vertex having degree in $G$ at least $\ln \ln n$.  $S_1$ must have size at most $s-1$ by condition W1.  Note that this component shares no neighbors in $G$ with the rest of $S$.

Suppose that every vertex in $G$ adjacent to $S_1$ had two neighbors
in $S_1$.  It would then follow that the induced subgraph of $G$ on
$S_1 \cup N(S_1)$ contained at least $$2|N(S_1)|-|S_1 \cap N(S_1)|$$
edges.  On the other hand, condition W2 implies that this induced
subgraph is a forest, so has at most
$$|S_1 \cup N(S_1)|-1=|S_1|+|N(S_1)|-|S_1 \cap N(S_1)|-1$$
edges.  Combining these two inequalities would yield $|N(S_1)| \leq
|S|-1$.

Thus either $S_1$ is a non-expanding subset of $S$ of size at most
$s-1$, or there is a vertex adjacent to exactly one vertex in $S_1$
(and thus in $S$).  This fulfills the second requirement for $G$ to
be good.

If $|S| \geq s$, then we can find a second component $S_2$
satisfying the same conditions as $S_1$.  Unless either $S_1$ or
$S_2$ fails to expand, there will be a vertex in $G$ adjacent to
exactly one vertex in $S_1$ and another vertex adjacent to exactly
one vertex in $S_2$, so $S$ must be nice.  Thus the first
requirement for $G$ to be good is also satisfied.

Case 2b: $w \geq 10j$.  If $S$ is not nice, then at least $b-1$ of
the neighbors of $S$ must be adjacent to at least two vertices in
$S$. This implies that $b \leq \frac{a+1}{2}$.  It follows that we
can bound $\P(E_j)$ by ${\hbox{\bf P}}(b \leq \frac{w+1}{2}|a=w)$.
To do this, we fix a set of  $\frac{w+1}{2}$ vertices and bound  the
probability that $w$ consequentially randomly selected vertices were
in that set. Using the union bound over all possible sets of
$\frac{w+1}{2}$ vertices, we obtain
\begin{eqnarray*}
{\hbox{\bf P}}(b \leq \frac{w+1}{2}|a=w) &\leq&
\binom{m-j}{\frac{w+1}{2}} (\frac{w+1}{2(m-j)})^w \\
&\leq& (\frac{2 e (m-j)}{w-1})^{\frac{w+1}{2}} (\frac{w+1}{2(m-j)})^w \\
&\leq& (\frac{4w}{m})^{\frac{w-1}{2}}.
\end{eqnarray*}
This bound is decreasing in $w$ for the entire range under
consideration (our bounds on $j$ guarantee $w$ is at most $\frac {10
n}{\sqrt{\ln n}}$).  Therefore we can bound $\P(q_j | a=w)$ by the
probability given $a=10j$, giving
\begin{eqnarray*}
p_j &\leq& 3 \sqrt{n} \binom{m}{j}(\frac{40 j}{m})^{5j}\\
&\leq& 3 \sqrt {n} (\frac{m e}{j})^j (\frac{40 j}{m})^{5j}\\
&\leq& 3 \sqrt{n} (\frac{130j}{n^{\prime}})^{4j}.
\end{eqnarray*}
This bound is decreasing in $j$ in the range under consideration,
and plugging in $j=s$ gives that $p_j=o(1/n^4)$ for each $j$ and $m$
in the range.  By the union bound the probability of failure in this
range is $o(1/n^2)$.

\section{Proofs of Lemmas \protect\ref{lemma:singaug} and \protect\ref%
{lemma:nonsingaug}}

\subsection{Proof of Lemma \ref{lemma:singaug}}
Let $Q$ be a fixed nice matrix which is not $s-1$-saturated.  Since
$Q$ is not saturated, there is a vector $v:=(v_1, v_2, \dots v_m)^T$
in the nullspace of $Q$ whose support does not contain the
coordinates corresponding to any non-expanding subset of at most
$s-1$ rows.  Let $D$ be the number of nonzero coordinates in $v$. If
$D$ were at most $k$, then the goodness of $Q$ would guarantee that
the support of $v$ would be nearly nice, meaning that some vertex
$i$ has only one neighbor in the support of $v$.  But this is a
contradiction, as the product of row $i$ with $v$ would then be
$v_i$, which is by assumption not 0 since it is in the support of
$v$.

We may therefore assume that $D>k$ and, without loss of generality,
that it is the first $D$ coordinates of $v$ which are nonzero.  We
now consider the linear equation

\begin{equation} \label{eqn:singaug}
\sum_{i=1}^D v_i x_i =0
\end{equation}

If the new column $x$ does not satisfy this equation, then
augmenting $Q$ by this column will increase the rank by 1 and (by
the symmetry of $Q$) augmenting $Q$ simultaneously by the $x$ and
its transpose will increase the rank by 2.  Therefore it suffices to
bound the probability that \eqref{eqn:singaug} is satisfied.  To do
so, we use the following variation of the classical
Littlewood-Offord Theorem \cite{Erdos} due to Hal\'{a}sz \cite{Hal}.

\begin{theorem} \label{thm:linearLO}
Let $\chi_i$ be independent random variables satisfying
\begin{equation*}
 \max_i \sup_{c \in \R} \P(\chi_i=c) \leq 1-\rho.
\end{equation*}
Then
\begin{equation*}
\P(\sum_{i=1}^D \chi_i =0)=O((D\rho)^{-1/2})
\end{equation*}
\end{theorem}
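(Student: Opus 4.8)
The plan is to deduce Theorem \ref{thm:linearLO} from classical concentration-function estimates. Since the $\chi_i$ are not assumed to be lattice-valued and carry no scale normalization, the cleanest route is to work with the exact atom $\P(S_D = 0)$, where $S_D := \sum_{i=1}^D \chi_i$. First I would pass to the concentration function $Q_D := \sup_{c \in \R} \P(S_D = c)$, noting $\P(S_D = 0) \le Q_D$ and that the hypothesis says precisely that $\sup_c \P(\chi_i = c) \le 1 - \rho$ for every $i$.

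The core estimate is the Kolmogorov--Rogozin concentration inequality: for independent summands and any $\lambda > 0$,
$$Q\!\left(S_D; \lambda\right) \le \frac{C}{\sqrt{\sum_{i=1}^D \bigl(1 - Q(\chi_i; \lambda)\bigr)}},$$
where $Q(X;\lambda) := \sup_{c} \P\bigl(X \in [c, c+\lambda]\bigr)$. As $\lambda \downarrow 0$ the left side decreases to $\sup_c \P(S_D = c) \ge \P(S_D = 0)$, while a short tightness argument for a single random variable gives $Q(\chi_i;\lambda) \downarrow \sup_c \P(\chi_i = c) \le 1-\rho$; hence each $1 - Q(\chi_i;\lambda)$ increases to a limit $\ge \rho$, and the right side decreases to at most $C/\sqrt{D\rho}$. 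This yields $\P(S_D = 0) \le C/\sqrt{D\rho} = O\bigl((D\rho)^{-1/2}\bigr)$, which is the claim.

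If a self-contained argument is preferred, I would instead run the Esseen--Fourier estimate directly. Writing $\phi_i$ for the characteristic function of $\chi_i$, dominated convergence gives $\P(S_D = 0) = \lim_{T\to\infty} \frac{1}{2T}\int_{-T}^T \prod_i \phi_i(t)\,dt \le \limsup_{T\to\infty} \frac{1}{2T}\int_{-T}^T \prod_i |\phi_i(t)|\,dt$. Symmetrizing with an independent copy $\chi_i'$ and setting $\eta_i := \chi_i - \chi_i'$ (symmetric, with $\P(\eta_i = 0) = \sum_c \P(\chi_i = c)^2 \le \sup_c \P(\chi_i=c) \le 1-\rho$, so $\P(\eta_i \ne 0) \ge \rho$), one has $|\phi_i(t)|^2 = \E\cos\bigl(t\eta_i\bigr) = 1 - \E[1-\cos t\eta_i] \le \exp\!\bigl(-\E[1-\cos t\eta_i]\bigr)$, hence $\prod_i |\phi_i(t)| \le \exp\!\bigl(-\tfrac12 \sum_i \E[1-\cos t\eta_i]\bigr)$. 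One then shows that the exponent grows like $\rho D$ times a quantity that, after the $T \to \infty$ averaging, forces the integral to concentrate on a window of width $\asymp (D\rho)^{-1/2}$ about the origin (the contribution away from the origin being exponentially small), producing the same bound.

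The main obstacle is exactly that the $\chi_i$ carry no scale normalization: rescaling every $\chi_i$ by a constant moves the characteristic functions but leaves $\P(S_D = 0)$ unchanged, so there is no reduction to a random-walk return probability, and the hypothesis $\sup_c \P(\chi_i = c) \le 1-\rho$ must be exploited at exactly the level of the concentration function --- via the inductive doubling step inside Kolmogorov--Rogozin, or via a careful treatment of the $T \to \infty$ limit in the Fourier route. The only other point needing care, the identity $\lim_{\lambda\to 0^+} Q(\chi_i;\lambda) = \sup_c \P(\chi_i = c)$, is routine and follows from tightness of a single random variable.
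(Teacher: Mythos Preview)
The paper does not prove this theorem: it is quoted as a known result, attributed to Hal\'asz \cite{Hal} as a variant of the classical Littlewood--Offord theorem, and applied as a black box. So there is no proof in the paper to compare against.

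Your first argument via the Kolmogorov--Rogozin inequality is correct and is in fact the standard way to obtain the bound in this generality. The limiting step $\lim_{\lambda\to 0^+} Q(X;\lambda)=\sup_c \P(X=c)$ is handled exactly as you say (boundedness of the near-maximizing centers follows from the fact that they carry mass bounded away from zero), and with $L=l=\lambda$ the inequality yields $\sup_c \P(S_D=c)\le C/\sqrt{D\rho}$ directly. Your second, Fourier-analytic route is essentially the method of Hal\'asz/Esseen that underlies the cited reference; as you present it, it is only a sketch (the ``concentration on a window of width $\asymp (D\rho)^{-1/2}$'' step hides the real work, namely lower-bounding $\sum_i \E[1-\cos t\eta_i]$ away from the atom-return set), whereas the Kolmogorov--Rogozin route is complete as written and is the cleaner choice here.
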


Although the $x_i$ aren't all random in our case (our conditioning
on the normality of $(G_m, G_{m+1})$ guarantees that all variables
corresponding to vertices in non-nice subsets of $G$ are 0), most of
them will be random.  Since we are assuming $G$ to be good, the
number of non-random $x_i$ is bounded above by the number of vertices
of degree at most $s$, which is in turn bounded above by $\frac{1}{p
\ln n}=o(D)$.

Thus after removing the $x_i$ which are forced to be 0 we are left
with $D(1+o(1))$ independent variables with nonzero coefficients.  Each $v_i x_i$ is equal to 0 with probability $1-p$ and $v_i w_i$ with probability $p$.  Applying Theorem \ref{thm:linearLO} with $\rho=p$, we see that the probability \eqref{eqn:singaug} is
satisfied is therefore at most $O((Dp)^{-1/2})=O((kp)^{-1/2})$.

\subsection{Proof of Lemma \ref{lemma:nonsingaug}}
Just as in the proof of Lemma \ref{lemma:singaug}, the goal will be eventually to use a Littlewood-Offord Lemma to show that a certain expression is almost surely not equal to 0.  In this case, the expression in question will be the determinant of an appropriately chosen submatrix of the augmentation of $Q$.  Before we apply it, however, we first attempt to obtain additional information on the structure of $Q$, and in particular on $G=G(Q)$.

Let $T$ be the union of all minimal non-expanding subsets of $G$
which contain at most $s-1$ vertices.  We begin by proving two
lemmas on the structure of $T$ under the assumption that $G$ is
well-separated.

\begin{lemma} \label{lemma:Tcycles}
Let $T$ be defined as above.
Then $N(T) \cap T = \emptyset$.
\end{lemma}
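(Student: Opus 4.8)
The plan is to argue by contradiction: suppose some vertex $v$ lies in $T\cap N(T)$, so that $v$ belongs to a minimal non-expanding set $S_1$ with $|S_1|\le s-1$ and $v$ is adjacent to a vertex $u$ belonging to a minimal non-expanding set $S_2$ with $|S_2|\le s-1$. The engine of the proof will be a structural fact about a single such set $S$ (with $|S|\le s-1$, in a well-separated $G$): \emph{$S$ induces no edges and no self-loops, and $S\cap N(S)=\emptyset$}. To see this, note that $|N(S)|\le|S|-1$ while every proper subset of $S$ expands, so the bipartite graph between $S$ and $N(S)$ satisfies Hall's condition after deleting any one vertex of $S$; hence for each $x\in S$ there is a matching from $S\setminus\{x\}$ saturating $N(S)$, which forces every vertex of $N(S)$ to have at least two neighbours in $S$. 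If $S^{*}:=S\cap N(S)$ were nonempty, every vertex of $S^{*}$ would have at least two neighbours inside $S^{*}$, so $G[S^{*}]$ would contain a cycle of length at most $|S|\le s-1\le 12s$ through vertices of degree at most $|N(S)|\le s-2\le\ln\ln n$, contradicting W2 (the self-loop case is length $1$). Thus $S^{*}=\emptyset$, and absence of induced edges and loops follows.

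From this I would record two easy consequences. First, every vertex of $S_1$ and of $S_2$ has degree at most $s-2\le\ln\ln n$, since its neighbours lie in $N(S_i)$. Second, $G[S_1\cup N(S_1)]$ is connected — otherwise splitting along components produces a proper non-expanding subset of $S_1$ — and likewise $G[S_2\cup N(S_2)]$; since the edge $uv$ joins them, $R:=(S_1\cup N(S_1))\cup(S_2\cup N(S_2))$ induces a connected subgraph on at most $(2|S_1|-1)+(2|S_2|-1)\le 5s$ vertices (using $S_i\cap N(S_i)=\emptyset$ and $|N(S_i)|=|S_i|-1$), so by W1 it contains at most $s-1$ vertices of degree $\le\ln\ln n$. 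As $S_1\cup S_2\subseteq R$ consists of such vertices, $|S_1\cup S_2|\le s-1$. Moreover, by the no-internal-edge fact, $v\neq u$, $u\notin S_1$ and $v\notin S_2$; writing $A:=S_1\setminus S_2$ and $B:=S_2\setminus S_1$ we get $v\in A$, $u\in B$, and every edge of $G$ inside $S_1\cup S_2$ runs between $A$ and $B$, since edges within $S_1$, within $S_2$, or incident to $S_1\cap S_2$ are all excluded.

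To finish: if $a\in A$ has a neighbour $b\in B\subseteq S_2$, then $a\in N(S_2)\setminus S_2$, so $a$ has at least two neighbours in $S_2$; none of them lies in $S_1$ (that would be an edge inside $S_1$), so $a$ has at least two neighbours in $B$, and symmetrically every vertex of $B$ with a neighbour in $A$ has at least two neighbours in $A$. Hence the subgraph $G'$ induced on the non-isolated vertices of the bipartite graph between $A$ and $B$ is nonempty (it contains $uv$) with minimum degree at least $2$, so it contains a cycle. That cycle is even, of length between $4$ and $|S_1\cup S_2|\le s-1\le 12s$, and all its vertices have degree $\le\ln\ln n$, contradicting W2 again. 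This contradiction shows no such $v$ exists, i.e.\ $N(T)\cap T=\emptyset$.

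The routine parts are the applications of W1, W2, and the deficiency form of Hall's theorem. The main obstacle I expect is the bookkeeping around the three overlapping sets $S_1$, $S_2$, $S_1\cap S_2$: one has to be careful that $u$ and $v$ each lie in exactly one of $S_1$, $S_2$, that the ``one neighbour on the other side forces two'' cascade genuinely stays within $A\cup B$ rather than leaking into $S_1\cap S_2$ or $N(S_i)$, and that the cycle produced by the minimum-degree argument lies entirely among low-degree vertices so that W2 applies.
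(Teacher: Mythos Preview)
Your argument is correct, and the contradiction you reach via W2 is the same one the paper obtains. The route, however, is longer than necessary, and it is worth seeing how the paper streamlines it.

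The paper does not first prove the auxiliary fact that a single minimal non-expanding set $S$ satisfies $S\cap N(S)=\emptyset$, nor does it invoke W1 at all. Instead, given adjacent $v\in T_v$ and $w\in T_w$ (each a minimal non-expanding set of size at most $s-1$, possibly equal), it observes directly from minimality that for any $x\in T_v$ one has $N(T_v\setminus\{x\})=N(T_v)$; hence any vertex with a neighbour in $T_v$ already has a second neighbour in $T_v$, and symmetrically for $T_w$. Thus in the graph on $T_v\cup T_w$ whose edges are those of $G$ with one end in $T_v$ and the other in $T_w$, every non-isolated vertex has degree at least $2$. Since $|T_v\cup T_w|\le 2(s-1)<12s$ and every vertex involved has $G$-degree at most $s-2\le\ln\ln n$, condition W2 forces this graph to be a forest, which is impossible once the edge $vw$ is present. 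That is the whole proof.

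Two remarks on your version. First, the structural lemma you prove about a single $S$ is really the special case $T_v=T_w$ of the paper's argument, so proving it separately and then redoing the two-set case duplicates work. Second, your appeal to W1 to get $|S_1\cup S_2|\le s-1$ is not needed: the crude bound $|S_1\cup S_2|\le 2(s-1)<12s$ already suffices for W2 to apply, so the connectedness of $S_i\cup N(S_i)$ and the whole $R$-and-W1 paragraph can be dropped. What your more explicit bookkeeping does buy you is the clean statement that all cross edges go between $A=S_1\setminus S_2$ and $B=S_2\setminus S_1$; the paper's formulation sidesteps this by allowing vertices of $T_v\cap T_w$ to sit on both sides of its ``bipartite'' graph, which is slicker but perhaps less transparent.
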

\begin{proof}
Assume to the contrary that there are two vertices $v$ and $w$ in
$T$ which are adjacent in $T$.  Let $T_v$ and $T_w$ be minimal
non-expanding subsets of $G$ such that $T_v$ contains $v$, $T_w$
contains $w$, and $|T_v|$ and $|T_w|$ are both at most $s-1$ ($T_v$
and $T_w$ may in fact be equal here).  By the minimality of $T_v$,
we have $|N(T_v-v)| \geq |T_v|-1 \geq |N(T_v)|$, so it follows that
$w$ must also be in $N(T_v-v)$.  A similar argument shows that any
vertex in $T_v$ with a neighbor in $T_w$ has at least one other
neighbor in $T_w$, and vice versa.

We now consider the graph on vertex set $T_v \cup T_w$ whose edges
correspond to edges in $G$ with one vertex lying in $T_v$ and the other vertex in $T_w$ (one or both of these vertices may be in $T_v \cap T_w$).  By the above
argument, no vertex in this graph has degree exactly 1.  However,
the well-separation condition W2 guarantees that this graph is a
forest (since any vertex in $T$ has degree at most $s-1$ in $G$),
which implies that the graph must in fact be empty.
\end{proof}

\begin{lemma}\label{lemma:matching}
Let $T$ be defined as above.  Then there is a $T_1 \subset T$ with $|T_1|=|N(T)|$ such
that $G$ has exactly one matching between $T_1$ and $N(T)$.
\end{lemma}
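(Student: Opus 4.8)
The claim is that the union $T$ of all minimal non-expanding subsets of size at most $s-1$ contains a subset $T_1$ with $|T_1| = |N(T)|$ admitting a \emph{unique} matching to $N(T)$ in $G$. The plan is to first understand the structure of $T$ via the well-separation hypothesis, then build the matching component by component. By Lemma \ref{lemma:Tcycles} we already know $N(T) \cap T = \emptyset$, so the bipartite graph $H$ between $T$ and $N(T)$ (edges inherited from $G$) is a genuine bipartite graph with no edges internal to $T$. Moreover, condition W2 forbids short cycles through low-degree vertices, and every vertex of $T$ has degree at most $s-1$ in $G$ (being in a minimal non-expanding set of size $\le s-1$); since cycles in $H$ alternate between $T$ and $N(T)$ and have length between $4$ and roughly $2(s-1) \le 12s$, every such cycle would pass through a vertex of $T$, hence through a low-degree vertex, contradicting W2. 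So $H$ is a forest.

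**The matching on a forest via leaf-stripping.** Because $H$ is a forest, I would construct $T_1$ and the matching greedily/inductively by peeling leaves on the $T$ side. Decompose $T$ into its minimal non-expanding pieces $T_v^{(1)}, \dots, T_v^{(r)}$; each satisfies $|N(T_v^{(i)})| = |T_v^{(i)}| - 1$ (equality, since by minimality no proper subset is non-expanding, so the piece is \emph{exactly} non-expanding). The key combinatorial fact I would exploit is a Hall-type/deficiency argument: in a forest, a bipartite graph with one side $A$ and the other $B$ has a matching saturating $B$ of size exactly $|B|$, and this matching is unique, provided $|N_H(S)| \ge |S|$ fails only in a controlled way. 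Concretely, since $H$ is a forest, I would root each tree and repeatedly remove a leaf of $N(T)$ together with its unique neighbor in $T$, recording that pair as a matching edge; the forest structure guarantees that at each stage some vertex of $N(T)$ is a leaf (unless $N(T)$ is exhausted), and uniqueness follows because in a forest every matching that saturates one side is forced once we observe that a saturating matching edge at a leaf is the \emph{only} edge at that leaf.

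**Why uniqueness holds and why $|T_1| = |N(T)|$.** For uniqueness I would argue by induction on $|N(T)|$: pick $u \in N(T)$ that is a leaf of the forest $H$ (exists since $H$ is a forest and $N(T) \ne \emptyset$ when nonempty); its unique neighbor $t \in T$ must be matched to $u$ in \emph{any} matching saturating $N(T)$; delete $u$ and $t$, note the remaining graph is still a forest with $N(T \setminus \{t\})$ of size one smaller inside it, and apply induction. This produces a set $T_1 \subseteq T$ of size exactly $|N(T)|$ and shows the saturating matching is unique. Existence of a saturating matching of size $|N(T)|$ itself needs Hall's condition for the $N(T)$ side: for any $U \subseteq N(T)$, $|N_H(U) \cap T| \ge |U|$. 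This should follow from the fact that each minimal non-expanding piece $T_v^{(i)}$ uses up its neighborhood tightly ($|N(T_v^{(i)})| = |T_v^{(i)}|-1$), so $T$ has at least as many ``interface'' vertices as $N(T)$; more carefully, $\sum_i (|T_v^{(i)}| - |N(T_v^{(i)})|) = r > 0$, and in the forest $H$ each neighbor in $N(T)$ is attached to the $T$-side with enough slack.

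**Main obstacle.** The delicate point is handling \emph{overlaps} among the minimal non-expanding pieces: the $T_v^{(i)}$ need not be disjoint, and their neighborhoods can interact, so one cannot simply take a disjoint union of per-piece matchings. The forest structure of $H$ is what saves this — it rules out the cyclic entanglement that would break a greedy leaf-stripping argument — but one must check carefully that the deficiency version of Hall's theorem applied to $H$ really gives a matching of size \emph{exactly} $|N(T)|$ and not smaller, i.e., that $\max_{U \subseteq N(T)} (|U| - |N_H(U)|) = 0$. I expect this to reduce, using acyclicity of $H$, to the observation that if some $U \subseteq N(T)$ had $|N_H(U)| < |U|$ then the vertices of $T$ reachable from $U$ would form a set whose further structure contradicts minimality of one of the pieces $T_v^{(i)}$ (it would contain a proper non-expanding subset). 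Making that reduction precise — carefully tracking which piece is violated and invoking W1/W2 to control the component sizes — is the real work; everything else is the routine forest bookkeeping sketched above.
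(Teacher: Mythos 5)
Your overall instincts are right — bipartiteness from Lemma~\ref{lemma:Tcycles}, acyclicity from well-separation, a greedy/leaf-stripping construction — but the leaf-stripping runs in the wrong direction, and this is not a cosmetic issue.

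The crucial structural fact, established in the proof of Lemma~\ref{lemma:Tcycles} and reused in the paper's proof of this lemma, is that every vertex of $N(T)$ has \emph{at least two} neighbors in $T$. Consequently, in the bipartite forest $H$ on $T \cup N(T)$ there is never a leaf on the $N(T)$ side; the leaves are always in $T$. Your algorithm (``repeatedly remove a leaf of $N(T)$ together with its unique neighbor in $T$'') therefore cannot even take its first step, and the assertion ``the forest structure guarantees that at each stage some vertex of $N(T)$ is a leaf'' is false. The paper's algorithm does the opposite: it locates a vertex $t \in T$ with exactly one unused neighbor in $N(T)$ — i.e.\ a leaf of the induced forest on the unused vertices — matches $t$ to that neighbor, and marks both used. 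This is well-defined precisely because the degree-$\geq 2$ property is an invariant: it holds initially by Lemma~\ref{lemma:Tcycles}, and is preserved because an $N(T)$-vertex is marked used as soon as its first $T$-neighbor is used. Nonemptiness plus acyclicity of the remaining graph guarantee a leaf, and that leaf must lie in $T$, so the process continues until all of $N(T)$ is matched. Uniqueness then comes from the triangular structure of the construction (each $t_i$ is adjacent only to $u_i$ among $u_i, u_{i+1}, \dots$), not from any global forcing.

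Related to this, your uniqueness argument — ``in a forest every matching that saturates one side is forced'' — is not correct as stated. Saturating matchings of $N(T)$ inside $T$ are generally far from unique (already on a path $t_1 - u_1 - t_2 - u_2 - t_3$ with $u_i \in N(T)$ there are three of them); what the lemma actually asserts, and what the paper proves, is that one can \emph{choose} a particular $T_1 \subset T$ for which the matching between $T_1$ and $N(T)$ is unique. Finally, your justification that $H$ is a forest only treats cycles of length ``roughly $2(s-1)$''; since $T$ is a union of many minimal pieces, you would also need to rule out long cycles, e.g.\ by noting that any connected subgraph of a long cycle through $T$ on $5s$ vertices would contain more than $s-1$ low-degree vertices, violating W1.
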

\begin{proof}
Consider the graph on $T \cup N(T)$ which includes all edges with at
least one endpoint in $T$.  We perform the following algorithm to
construct $T_1$: At each step we pick an unused vertex in $T$ with
exactly one unused neighbor in $N(T)$.  We then add that vertex to
$T_1$ and consider both it and its neighbor as used.

Assuming this algorithm eventually matches every vertex in $N(T)$
with a vertex in $T$, we are done, since the uniqueness of the
matching is clear from our matching process.  Showing the algorithm
does not terminate prematurely is equivalent to showing that after
every step either every vertex in $N(T)$ is used or there is an
unused vertex in $T$ with exactly one unused neighbor in $N(T)$.

To do this, we first note that any unused vertex in $N(T)$ has at
least two unused neighbors in $T$ (the argument in the previous
lemma shows that it has at least two neighbors in $T$, and by
construction our algorithm marks a vertex in $N(T)$ as used as soon
as its first neighbor in $T$ is used).  Furthermore, by well
separation the induced subgraph on the unused vertices of $T$ and
$N(T)$ is a forest, which is nonempty unless all vertices of $N(T)$
have been used.  It therefore must have a vertex of degree one,
which must be in $T$ since every unused vertex in $N(T)$ has degree
at least two.  This allows us to continue the algorithm until all of
$N(T)$ is matched.
\end{proof}

Without loss of generality we can now view $A(G)$ as the block matrix
below

\begin{equation*}
A(G)=\left(%
\begin{array}{cccc}
  A(G\backslash (T \cup N(T))) & A(G\backslash (T \cup N(T)), N(T)) & 0 & 0 \\
  A(N(T),G\backslash (T \cup T_1)) & A(N(T)) & A(N(T),T_1) & A(N(T),T \backslash T_1) \\
  0 & A(T_1,N(T)) & 0 & 0 \\
  0 & A(T \backslash T_1, N(T)) & 0 & 0 \\
\end{array}%
\right),
\end{equation*}

where $A(G,H)$ denotes the adjacency matrix of the induced bipartite
subgraph between $G$ and $H$.  The blocks in the lower right are 0
because of Lemma \ref{lemma:Tcycles}.  By construction the fourth
row of blocks is contained in the span of the third row.

Let $B$ be the matrix formed by the first three rows and columns of
blocks of $A$ (note that $\rank(B)=\rank(A))$.  To prove
\ref{lemma:nonsingaug} it suffices to show that augmentation will
almost surely increase the rank of $B$.  Our assumption of normality
guarantees we can think of the augmentation as

\begin{equation*}
B'=\left( \begin{array}{cccc}
  A(G\backslash (T \cup N(T))) & A(G\backslash (T \cup N(T)), N(T)) & 0 & x \\
  A(N(T),G\backslash (T \cup T_1)) & A(N(T)) & A(N(T),T_1) & y \\
  0 & A(T_1,N(T)) & 0 & 0 \\
  x^T & y^T & 0 & 0 \\
\end{array}%
\right),
\end{equation*}

where $x$ and $y$ are random vectors each of whose entries are 1
with probability $p$, 0 otherwise.  We now expand $\det(B')$ by
minors simultaneously along all rows and columns in the third row
and column of blocks.  By Lemma \ref{lemma:matching}, only one
nonzero term remains, so we are left with

\begin{equation*}
\det(B')=\pm \det \left( \begin{array}{cc}
  A(G\backslash (T \cup N(T))) &  x \\
  x^T & 0 \\
\end{array}%
\right) =\pm \sum_{i=1}^m \sum_{j=1}^m A(i,j) x_i x_j,
\end{equation*}
where $A(i,j)$ denotes the $(i,j)$ cofactor of $A(G\backslash (T
\cup N(T)))$.

This is the expression that we are aiming to show is almost surely non-zero.  Unlike in the proof of Lemma \ref{lemma:singaug}, however, this expression is a quadratic form in the variables $x_i$.  Thus instead of using the original Littlewood-Offord Lemma, we will use the following quadratic version proved in \cite{CV}.

\begin{lemma}
\label{lemma:modifiedquadraticL-O} Let $a_{ij}$ be fixed constants
such that there are at least $q$ indices $j$ such that for each $j$
there are at least $q$ indices $i$ for which $a_{ij} \neq 0$. Let
$z_1, z_2, \dots z_n$ be as in Lemma \ref{thm:linearLO}.
Then for any fixed $c$
\begin{equation}  \label{eqn:qref}
{\hbox{\bf P}}(\sum_{i=1}^n \sum_{j=1}^n a_{ij} z_i
z_j=c)=O((q \rho)^{-1/4}),
\end{equation}

where the implied constant is absolute.
\end{lemma}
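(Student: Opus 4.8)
The plan is to follow the decoupling strategy of \cite{CTV}: trade the quadratic form for a linear one, at the price of replacing a probability by its square, and then invoke the linear Littlewood--Offord bound (Theorem~\ref{thm:linearLO}). We may assume $q\rho\to\infty$, as otherwise $(q\rho)^{-1/4}$ is bounded below and there is nothing to prove; we may also assume $a_{ij}=a_{ji}$ (this holds in our application, where the $a_{ij}$ are cofactors of a symmetric matrix, and in general one symmetrizes the form and reads the hypothesis for $a_{ij}+a_{ji}$). By hypothesis there is a set $J$ of $q$ columns, each with support $S_j:=\{i:a_{ij}\neq 0\}$ of size at least $q$. Fix any $J'\subseteq J$ with $|J'|=\lfloor q/2\rfloor$ and split $[n]=U\sqcup V$ by $V:=J'$. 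The point of this choice is that placing these columns in $V$ cannot destroy most of their support: for every $j\in J'$ we have $|S_j\cap U|=|S_j\setminus J'|\ge q-|J'|\ge q/2$.

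Next comes the decoupling. Let $(z_i')_{i\in U}$ be an independent copy of $(z_i)_{i\in U}$. Splitting $Q=\sum_{i,j}a_{ij}z_iz_j$ along $U\sqcup V$, the $V\times V$ block cancels in $Q(z_U,z_V)-Q(z_U',z_V)$, so conditioning on $z_V$ and using Cauchy--Schwarz gives
\[
\sup_c \P(Q=c)^2 \;\le\; \P\bigl(Q(z_U,z_V)-Q(z_U',z_V)=0\bigr) \;=\; \E_{z_U,z_U'}\Bigl[\,\P_{z_V}\Bigl(\sum_{j\in V}\mu_j z_j = W\Bigr)\Bigr],
\]
where $\mu_j:=\sum_{i\in U}(a_{ij}+a_{ji})(z_i-z_i')$ and $W=W(z_U,z_U')$ is constant once $z_U,z_U'$ are fixed. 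After $(z_U,z_U')$ is exposed the inner probability is over the independent family $(z_j)_{j\in V}$, which still satisfies the hypothesis of Theorem~\ref{thm:linearLO} with the same $\rho$, so it is $O\bigl(\min(1,(N\rho)^{-1/2})\bigr)$, where $N:=\#\{j\in V:\mu_j\neq 0\}$.

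It remains to show that $\E_{z_U,z_U'}\bigl[\min(1,(N\rho)^{-1/2})\bigr]=O\bigl((q\rho)^{-1/2}\bigr)$, from which $\sup_c\P(Q=c)=O\bigl((q\rho)^{-1/4}\bigr)$ follows by taking square roots. Here the linear Littlewood--Offord theorem is used a \emph{second} time: for fixed $j\in V$, $\mu_j$ is a linear form in the independent variables $(z_i-z_i')_{i\in U}$, and since $\sup_c\P(z_i-z_i'=c)=\sum_v\P(z_i=v)^2\le\sup_v\P(z_i=v)\le 1-\rho$, Theorem~\ref{thm:linearLO} applies to them with parameter $\rho$ as well; as $\mu_j$ has at least $q/2$ nonzero coefficients (those indexed by $i\in S_j\cap U$), we get $\P(\mu_j=0)=O\bigl((q\rho)^{-1/2}\bigr)$. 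Summing over $j\in V$ and applying Markov's inequality to $\#\{j\in V:\mu_j=0\}$ shows $N\ge|V|/2$ except with probability $O\bigl((q\rho)^{-1/2}\bigr)$; on that event $\min(1,(N\rho)^{-1/2})=O\bigl((q\rho)^{-1/2}\bigr)$, and off it we use the trivial bound $1$. All constants introduced are absolute.

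The step I expect to be the crux is the exponent bookkeeping just described. Decoupling costs a square root, so to reach $(q\rho)^{-1/4}$ rather than something weaker one must show the decoupled linear form keeps $\Omega(q)$ nonzero coefficients \emph{with failure probability only $O((q\rho)^{-1/2})$} --- and that is what forces the second application of Theorem~\ref{thm:linearLO}, rather than a cruder estimate, together with the small check that the symmetrized variables $z_i-z_i'$ retain small point concentration. A lesser point, invisible in the symmetric application but real in the general statement, is that the hypothesis is phrased in terms of columns of $a$, so the partition in the first step must be organized around those columns; the inequality $|S_j\setminus J'|\ge q/2$ is exactly what makes this work.
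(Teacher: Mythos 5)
The paper does not prove this lemma itself---it quotes it from \cite{CV}, where (as in \cite{CTV}) the proof is exactly the decoupling argument you give: split the index set, apply Cauchy--Schwarz to replace $\P(Q=c)^2$ by the probability that a decoupled linear form vanishes, then apply Hal\'asz's theorem twice (once to the inner linear form in $z_V$, once to show the coefficients $\mu_j$ are typically nonzero). Your proof is correct, and your remark that symmetry of $(a_{ij})$ (or a hypothesis phrased in terms of $a_{ij}+a_{ji}$) is genuinely needed is a worthwhile precision---the antisymmetric case shows the lemma is false as literally stated, and in the intended application the $a_{ij}$ are cofactors of a symmetric matrix, so the condition holds.
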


Our goal will be to show that probably enough of
these cofactors are nonzero that we can apply this Quadratic
Littlewood-Offord Lemma to say that the determinant of $B'$ is
probably not zero.  To do so, we first establish some properties of
the matrix $C:=A(G\backslash (T \cup N(T)))$.

\begin{lemma} \label{lemma:nonsingC}
$C$ is nonsingular.
\end{lemma}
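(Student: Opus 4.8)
The plan is to derive nonsingularity of $C$ from the fact that $Q$ (equivalently $A(G)$) is $(s-1)$-saturated, together with the block structure already exhibited above. Recall that saturation means $\rank(A(G))$ equals the size of the largest $(s-1)$-unobstructed set of $G$, and that $T$ was defined as the union of all minimal non-expanding subsets of size at most $s-1$. The first step is to identify $V(G) \backslash (T \cup N(T))$, i.e. the row/column index set of $C$, as a maximum $(s-1)$-unobstructed set of $G$ — or at least to show it realizes the rank. From Lemma \ref{lemma:Tcycles} we know $T$ and $N(T)$ are disjoint, and from Lemma \ref{lemma:matching} there is a unique perfect matching from a subset $T_1 \subset T$ onto $N(T)$, with $|T_1| = |N(T)|$; thus $|T \cup N(T)| = |T| + |N(T)|$ and the ``surplus'' $|T| - |N(T)| = |T| - |T_1|$ counts exactly the co-rank contributed by the small non-expanding sets.

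The key computation is a rank count on the block form of $A(G)$ displayed before the lemma. First I would argue that every non-expanding set of $G$ of size at most $s-1$ is contained in $T$ (immediate from the definition of $T$ as the union of all \emph{minimal} such sets, once one checks that a non-minimal non-expanding set of size $\le s-1$ still contains a minimal one, hence meets $T$, and in fact — using well-separation W1/W2 and Lemma \ref{lemma:Tcycles} — lies entirely inside $T$). Consequently $G \backslash (T \cup N(T))$ is $(s-1)$-unobstructed. Next, using the block structure: the last two block-rows (indexed by $T_1$ and $T \backslash T_1$) have all zero blocks except the $N(T)$-column, and the $(T\backslash T_1)$-rows lie in the span of the $T_1$-rows; the unique matching from Lemma \ref{lemma:matching} makes the $A(T_1, N(T))$ block ``triangularizable'' with nonzero diagonal. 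Eliminating the $T_1$ and $N(T)$ rows and columns against each other via this matching, and discarding the redundant $T \backslash T_1$ rows, leaves precisely $C$ in the corner. This gives $\rank(A(G)) = \rank(C) + 2|N(T)|$, hence $\rank(C) = \rank(A(G)) - 2|N(T)| = |V(G)| - |T| - |N(T)| - (|T| - |N(T)|) + (|T|-|N(T)|)$... more carefully: saturation gives $\rank(A(G)) = |V(G)| - (|T| - |T_1|)$, so $\rank(C) = |V(G)| - (|T|-|T_1|) - 2|N(T)| = (|V(G)| - |T| - |N(T)|) = |V(C)|$, i.e. $C$ has full rank.

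The main obstacle I anticipate is the block-reduction rank identity $\rank(A(G)) = \rank(C) + 2|N(T)|$: one must verify that no dependency among the rows of $C$ can be ``created'' or ``cancelled'' by interaction with the $N(T)$-block once the matching is used to clear it, and symmetrically that the $T$-side rows genuinely contribute rank exactly $2|N(T)|$ and no more — this is where the uniqueness of the matching (Lemma \ref{lemma:matching}) and the vanishing of the lower-right blocks (Lemma \ref{lemma:Tcycles}) are both essential, since they prevent the $T_1$ rows from being entangled with anything but $N(T)$. A secondary subtlety is handling self-loops (diagonal entries of $W$): by well-separation W2, no vertex of $T$ (all of which have degree $\le s-1 \le \ln\ln n$) carries a self-loop, so the $A(T_1)$ and $A(T\backslash T_1)$ diagonal blocks are genuinely zero as drawn, which is what the argument needs. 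Once the identity is in hand, nonsingularity of $C$ is just arithmetic using $(s-1)$-saturation.
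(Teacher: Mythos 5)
Your overall plan (a block-reduction rank identity for $A$ combined with the value of $\rank(A)$ from $(s-1)$-saturation) is workable, and your rank identity $\rank(A)=\rank(C)+2|N(T)|$ is in fact correct: after deleting the redundant $T\setminus T_1$ block row and column, and then clearing the $N(T)$ block row and column against $T_1$ via the invertible matching block $A(T_1,N(T))$, the matrix reduces to $C\oplus\bigl(\begin{smallmatrix}0&M\\ M^T&0\end{smallmatrix}\bigr)$. But there is a genuine gap in how you pin down $\rank(A)$. You propose to identify $V(G)\setminus(T\cup N(T))$ as the maximum $(s-1)$-unobstructed set and say it "realizes the rank," and the only unobstructedness you actually establish is for that set, which gives only $\rank(A)\ge n-|T|-|N(T)|$. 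That is $2|N(T)|$ too small. The value you then plug in, $\rank(A)=n-(|T|-|T_1|)=n-|T|+|N(T)|$, is not what this set would give you. To justify it you need the \emph{larger} set $V(G)\setminus(T\setminus T_1)$ (the index set of the first three block rows, including $N(T)$ and $T_1$) to be $(s-1)$-unobstructed; that is exactly the step the paper takes, observing that any offending non-expanding subset of size at most $s-1$ would have to lie in $T\cap(T_1\cup N(T))=T_1$, but $T_1$ expands via the matching. So your argument implicitly uses that statement without proving it, while the step you do prove (unobstructedness of $V(G)\setminus(T\cup N(T))$) is not strong enough.

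The paper's route also avoids having to verify your rank identity at all: it gets $\rank(A)\ge n-|T\setminus T_1|$ from the unobstructedness of the first three blocks and $\rank(A)\le n-|T\setminus T_1|$ from the fact that the $T\setminus T_1$ block rows lie in the span of the $T_1$ block rows. Equality forces the first three block rows to be linearly independent, and one row reduction (clearing the $A(G\setminus(T\cup N(T)),\,N(T))$ block using the invertible $A(T_1,N(T))$ block) then exhibits the rows of $C$ as linearly independent. A small secondary point: your parenthetical claim that \emph{every} non-expanding set of size at most $s-1$ lies entirely inside $T$ is false (for instance, an isolated vertex together with a pendant vertex whose support vertex has no other pendant neighbor is non-expanding of size $2$ but is not contained in $T$); fortunately you only need that a \emph{minimal} such set lies in $T$, which is immediate from the definition of $T$, so this over-claim does not damage the part of the argument where you use it.
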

\begin{proof}
Any subset of $T_1$ must be expanding due to the matching between
$T_1$ and $N(T)$.  This implies that the first three rows of blocks
of $A$ cannot contain any non-expanding subsets of size at most
$s-1$ (such a subset would have to be in $T$ by the definition of
$T$, but could not be entirely within $T_1$ since $T_1$ expands.
Since $A$ is $(s-1)-$saturated, it follows that the rank of $A$ is
at least $n-|(T\backslash T_1)|$.

On the other hand, we know the third row of blocks is independent (the uniqueness of the matching in Lemma \ref{lemma:matching} implies $A(T_1, N(T))$ has determinant $\pm 1$), and contains the fourth row of blocks in its span.  Since this already accounts for the entire nullspace of $A$, the first three rows of blocks of $A$ must be independent.

This implies we can perform row reduction to eliminate the $A(G\backslash (T \cup N(T)), N(T))$ block of $A$, and that the rows of the reduced matrix (including the rows of $C$) are still independent.
\end{proof}

\begin{lemma} \label{lemma:nearniceness}
$G \backslash (T \cup N(T))$ is "almost good" in the following sense:

1. Every minimal non-nice subset of the vertices of $G \backslash (T \cup N(T))$ has size either at most $s-1$ or at least $k-\frac{1}{p \ln n}$.

2. Every minimal non-nearly nice subset of the vertices of $G \backslash (T \cup N(T))$ has size at least $k-\frac{1}{p \ln n}$.

3. At most $\frac{1}{p \ln n}$ vertices of $G \backslash (T \cup N(T))$ have degree less than
$s$.

\end{lemma}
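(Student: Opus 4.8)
The plan is to exploit that $R := T \cup N(T)$ is small and that deleting it perturbs the local structure of $G$ only slightly. First, $|T| \le 1/(p\ln n)$: each vertex of $T$ lies in a minimal non-expanding set of size at most $s-1$, hence has degree at most $s-2$ in $G$, so Property~3 of goodness applies. By Lemma~\ref{lemma:matching}, $|N(T)| = |T_1| \le |T|$, and by Lemma~\ref{lemma:Tcycles}, $T \cap N(T) = \emptyset$; thus $|R| \le 2/(p\ln n) = O(1/(p\ln n))$. For $n$ large, every vertex of $T$ is small-degree (degree at most $\ln\ln n$), and no vertex of $G' := G \setminus R$ is adjacent to any vertex of $T$, as it would then lie in $N(T) \subseteq R$.

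\emph{Local degree control.} Two deterministic consequences of well-separation will be used repeatedly. If some $v$ had $|N_G(v) \cap T| \ge s$, then $\{v\}$ together with $s$ of these neighbours would be a connected set of $s+1 \le 5s$ vertices with $s$ small-degree vertices, contradicting W1; so $|N_G(v) \cap T| \le s-1$ for every $v$. For $v \notin T$, two distinct $w, w' \in N_G(v) \cap N(T)$ cannot share a common neighbour $u \in T$, since $v, w, u, w'$ would then form a $4$-cycle through the small-degree vertex $u$, contradicting W2; hence the sets $N_G(w) \cap T$ (for $w \in N_G(v) \cap N(T)$) are pairwise disjoint and nonempty, and if there were $s$ of them we could build a connected set of $2s+1 \le 5s$ vertices --- $v$ together with $s$ internally disjoint length-$2$ paths into $T$ --- containing $s$ small-degree vertices, again contradicting W1; so $|N_G(v) \cap N(T)| \le s-1$. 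Hence $|N_G(v) \cap R| \le 2s-2$ for every $v$: passing to $G'$ lowers the degree of each surviving vertex by at most $2s-2$.

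\emph{Property 3.} Let $v \in V(G')$ with $\deg_{G'}(v) < s$. If $\deg_G(v) < s$, then $v$ is one of the at most $1/(p\ln n)$ such vertices of $G$. Otherwise $v$ lost a neighbour to $R$, so $\deg_G(v) \le 3s-3$, $v$ is small-degree, and, not being adjacent to $T$, has a neighbour $w \in N(T)$. Applying W1 to $w$ and its small-degree neighbours shows every $w \in N(T)$ has at most $s-1$ small-degree neighbours, so vertices of this second kind number at most $(s-1)|N(T)| \le (s-1)/(p\ln n)$. Altogether at most $O(1/(p\ln n))$ vertices of $G'$ have degree below $s$.

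\emph{Properties 1 and 2.} Note first that $G'$ has \emph{no} non-expanding set: by Lemma~\ref{lemma:nonsingC}, $C$ is nonsingular, so no subset of its rows is dependent, whereas a non-expanding set of vertices of $G'$ would give one. Let $S \subseteq V(G')$ be a minimal non-nice (resp.\ minimal non-nearly-nice) subset with $|S| \ge s$ (resp.\ $|S| \ge 2$; a non-nearly-nice singleton would be an isolated vertex, hence a non-expanding set, which $G'$ lacks), and suppose for contradiction that $|S| < k - O(1/(p\ln n))$. A vertex of $G$ adjacent to exactly one vertex of $S$ and lying in $V(G')$ is precisely a unique-neighbour witness for $S$ in $G'$, of which there is at most one (resp.\ none); moreover no vertex of $T$ is adjacent to $S$. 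Hence all but at most one (resp.\ all) of $S$'s unique-neighbour witnesses in $G$ lie in $N(T)$, and $S$ has at most $|N(T)|+1 = O(1/(p\ln n))$ of them in $G$. If $S$ has at most one (resp.\ no) witness in $G$, then $S$ is non-nice (resp.\ non-nearly-nice) in $G$ and contains a minimal such subset $S_0$; by goodness, either $|S_0| \ge k+1$ (forcing the contradiction $|S| \ge k+1$) or $S_0$ contains (resp.\ equals) a non-expanding set of size at most $s-1$ which, being contained in $V(G')$, is also non-expanding in $G'$ --- impossible. It remains to treat $S$ having between one and $O(1/(p\ln n))$ unique-neighbour witnesses in $G$, all in $N(T)$: the plan is to enlarge $S$ to a set $S^+$ with $|S^+| \le |S| + O(1/(p\ln n)) < k$ that is non-nice (resp.\ non-nearly-nice) in $G$ --- by adjoining those $N(T)$-witnesses together with their partners in $T_1$ under the matching of Lemma~\ref{lemma:matching} --- and then to argue exactly as above. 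Verifying that this enlargement genuinely kills every unique-neighbour vertex of $G$ without creating new ones is the heart of the matter and the main obstacle, because $N(T)$ may contain high-degree vertices and a careless enlargement would spawn many new witnesses; controlling this is where the uniqueness of the matching of Lemma~\ref{lemma:matching}, together with W1 and W2, is essential, and it is also what forces the additive $O(1/(p\ln n))$ slack relative to the goodness of $G$.
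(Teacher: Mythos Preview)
Your outline through Property~3 is essentially sound (the bound you obtain there is $O(1/(p\ln n))$ rather than exactly $1/(p\ln n)$, but the paper's own proof is equally loose on this point, so no real complaint). The substantive gap is in your treatment of Properties~1 and~2, and you correctly identify where it lies: the enlargement step.

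The difficulty is that your proposed enlargement---adjoining the $N(T)$-witnesses themselves together with their $T_1$-partners---adds \emph{high-degree} vertices to $S$. A vertex $v\in N(T)$ may have many neighbours in $G'$, and once $v$ is placed in $S^+$ each such neighbour is a candidate new unique-neighbour witness lying in $G'$. There is no evident mechanism to kill these without further uncontrolled growth of $S^+$, and neither W1, W2, nor the uniqueness of the matching in Lemma~\ref{lemma:matching} prevents $N(T)$-vertices from having large degree into $G'$. So as stated the enlargement does not produce a non-nice (resp.\ non-nearly-nice) set in $G$, and the appeal to goodness of $G$ cannot be made.

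The paper's device is simpler and avoids this entirely: enlarge $S$ by adding only vertices of $T$, never of $N(T)$. Concretely, while some $v\in N(T)$ has exactly one neighbour in the current set, pick a neighbour of $v$ in $T$ not yet added and adjoin it; this is always possible because every vertex of $N(T)$ has at least two neighbours in $T$. Since $T$-vertices have all their neighbours in $N(T)$ (Lemma~\ref{lemma:Tcycles}), each such addition can create new unique-neighbour witnesses only inside $N(T)$, so the process stays confined and terminates after at most $|T|\le 1/(p\ln n)$ steps with a set $S'$ of size at most $k$ having no $N(T)$-witness at all. The resulting $S'\cap T$ carries a matching into $N(T)$ by construction, so $S'$ contains no non-expanding subset of size $\le s-1$ (any such subset would lie in $T$, hence in $S'\cap T$); goodness of $G$ then forces $S'$ to be nearly-nice in $G$, and the witness $w$ cannot lie in $N(T)$ (by termination) nor in $T$ (its neighbour in $S'$ would have to be in $N(T)$, but $S'\cap N(T)=\emptyset$), so $w\in G'$ witnesses nearly-niceness of $S$ in $G'$. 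The nice case is handled analogously. The key point you were missing is that one should \emph{not} absorb the problematic $N(T)$-witness into $S^+$, but instead neutralise it by giving it a second neighbour drawn from $T$.
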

\begin{proof}
Let $S$ be a subset of the vertices of $G\backslash (T \cup N(T))$
of size at most $k-\frac{1}{p \ln n}$, and let $S_1$ denote those
vertices in $N(T)$ which have exactly one neighbor in $S$.  We now
perform the following algorithm: So long as $S_1$ remains nonempty,
we choose a vertex $v$ in $S_1$, choose a vertex of $T$ adjacent to
$v$ which is not already in $s$, add that vertex in $T$ to $S$, and update $S_1$ accordingly.  Since each
vertex in $N(T)$ has at least two neighbors in $T$, we will always
be able to continue this process so long as $S_1$ remains nonempty.
In particular, the process must terminate by the time we have added
all the vertices in $T$ to $S$.

Let $S'$ be the set which results once the algorithm terminates. We
first note that $S'$ can have size at most $k$ (The vertices in $T$
which we add to $S$ always have degree at most $s-1$, and the number
of such vertices is bounded by our assumption that $G$ is good).
Furthermore, there is a natural matching between $S' \cap T$ and
$N(T)$ given by matching each vertex of $S' \cap T$ with the $v$ in
$N(T)$ which caused it to be added to $S'$.  This implies that $S'
\cap T$ (and thus $S'$) does not contain any non-expanding subset of
size at most $s-1$.  Since $G$ is good, this implies that $S'$ must
be nearly-nice, meaning there is some $w$ with only one neighbor in
$S'$.  This $w$ can't be in $N(T)$ by construction, and it can't be
in $T$ since $S'$ contains no vertices from $N(T)$.  It follows
that $w$'s neighbor must be in $S$, so $S$ is also nearly-nice.

A similar argument gives that all minimal non-nice subsets in
$G\backslash (T \cup N(T))$ either have size at least
$k-\frac{p}{\ln n}$ or at most $s-1$.  To show there aren't many
vertices of degree at most $s$ in $G\backslash (T \cup N(T))$, we
first note that by condition $W_1$ of well separation a given vertex
can only have $s-1$ neighbors in $N(T)$.  It follows that any vertex
of degree at most $s$ in $G\backslash (T \cup N(T))$ had degree at
most $2s$ in $G$, and this can be bounded by the same argument as
\eqref{eqn:lowdegree}.
\end{proof}

Since $C$ has full rank, dropping any of the columns of $C$ will lead to a $%
m \times m-1$ matrix whose rows admit (up to scaling) precisely one
nontrivial linear combination equal to 0. If any of the rows in that combination are dropped, we will be left with an $%
m-1 \times m-1$ nonsingular matrix, i.e. a nonzero cofactor.

As in Lemma \ref{lemma:singaug}, the rows with nonzero coefficients in this linear combination must form a non-nearly nice subset of the rows of the column deleted matrix of $C$.  By Lemma \ref{lemma:nearniceness} the only way that the combination can involve fewer than $k-\frac{1}{p \ln n}$ rows is if the rows involved formed a non-nice subset of $G \backslash (T \cup N(T))$, one of whose neighbors was the removed column.  We can upper bound the number of columns whose removal could possibly cause this difficulty by the number of vertices in $G \backslash (T \cup N(T))$ which have at least one neighbor which has degree at most $s$ in $G \backslash (T \cup N(T))$, which by Lemma \ref{lemma:nearniceness} is  $O(\frac{s}{p \ln n})=o(n)$.

Dropping any other column will lead to many nonzero cofactors, so we can apply the Quadratic Littlewood Offord Lemma with $q=k-\frac{1}{p \ln n}=k(1-o(1))$ and $\rho=p$ to bound the probability that the determinant is 0, proving Lemma \ref{lemma:nonsingaug}.

 \section{Proofs of Theorem \ref{thm:betterexpression} and \ref{thm:altexpression}}
\textbf{Proof of Theorem \ref{thm:betterexpression}:} What we will show here is that every $(s-1)-$saturated good matrix satisfies the conclusion of the theorem.  This is sufficient since by Theorem \ref{thm:mainresult} and \ref{lemma:goodmatrices} the graph will be both $(s-1)-$saturated and good with probability $1-O(\ln \ln n^{-1/4})$.  We will prove this result by contradiction.

Suppose that some subset $R$ of the rows of $Q_n$ is minimally
dependent but does not contain a non-expanding subset of size at
most $s-1$.  Since $G$ is good, it much be true that $|R|>\frac{\ln
\ln n}{p}$.

Since $G$ is $(s-1)-$saturated, there must be a subset $S_0 \in R$
which is both independent and maximal subject to not containing any
non-expanding sets of size at most $s-1$.  In particular, any row in
$R$ which is not in $S_0$ would create a non-expanding set when
added to $S_0$.  Since non-expanding sets are dependent, any row in
$R \backslash S_0$ can be written as a linear combination of at most
$s-1$ rows of $S_0$.

Now by assumption the rows of $R$ satisfy some linear relationship
\begin{equation}\label{eqn:Wdependence}
\sum_{i \in W} a_i v_i=0.
\end{equation}

For each row which is in $R$ but not in $S_0$, we substitute the corresponding linear combination of at most $s-1$ rows in $S_0$ which equals it into \eqref{eqn:Wdependence}.  This yields a linear relationship between the rows of the independent set $S_0$, which must therefore have all its coefficients equal to zero.  There were initially at least $\frac{\ln \ln n}{p}$ nonzero coefficients in \eqref{eqn:Wdependence}, and each substitution can change at most $s$ of them to zero.  It follows that

\begin{equation*}
|R \backslash S_0| \geq \frac{\ln \ln n}{s p}.
\end{equation*}

Each vertex in $R \backslash S_0$ is part of a non-expanding set of
size at most $s-1$, and thus has degree at most $s-2$.  However, $G$
is by assumption good, so has at most $\frac{1}{p \ln n}$ vertices
of degree this small.  This is a contradiction, so Theorem
\ref{thm:betterexpression} is proved.

\textbf{Proof of Theorem \ref{thm:altexpression}:} Again, we will show that any $(s-1)$-saturated, good matrix satisfies the conclusion of the theorem.

On one hand, it is clear that, for any $S$, the expression $n-|S|+|N(S)|$ is an upper bound for the rank of $A(G)$.  Thus it suffices to exhibit some set $S$ for which that expression is at most as large to the rank.  To do so, we return to the block decomposition of the proof of Lemma \ref{lemma:nonsingaug}.  Note that by the proof of Lemma \ref{lemma:nonsingC} the rows of the first three blocks of $A$ in this decomposition are independent, so we have
\begin{equation*}
\rank(A) \geq n-|T \backslash T_1|.
\end{equation*}

Conversely, if we take $S=T$ then we have
\begin{equation*}
n-|S|+|N(S)|=n-|T|+|T_1|=n-|T \backslash T_1|,
\end{equation*}

where the first inequality comes from the matching between $N(S)$ and $T_1$.  Combining these two equations yields the desired result.

\section{Extensions, Open Problems and Avenues for Further Research}
Although the symmetric model $Q(W,p)$ is natural from a graph theoretic viewpoint, it is also of interest to consider what happens when the matrix $W$ is no longer symmetric.  In the case $s=1$, we can show via a similar argument to the one in this paper that the following result holds:
\begin{theorem}
 Let $W$ be any (symmetric or non-symmetric) matrix of weights which is non-zero off the main diagonal.  Let $\frac{(1+\epsilon) \ln n}{n} < p < \frac{1}{2}$, and let $\xi_{ij}$ be independent Bernoulli variables which are 1 with probability $p$ and 0 otherwise.  Let $A(W,p)$ be the matrix whose entries are given by $a_{ij}=w_{ij}\xi_{ij}$.  Then
\begin{equation*}
 \P(A(W, p) \textrm{ is singular }=O((\ln \ln n)^{-1/2})
\end{equation*}
\end{theorem}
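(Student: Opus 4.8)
The plan is to adapt the minor-by-minor exposure argument of Section 3 to the non-symmetric (bipartite) setting; since $s=1$, only the linear Littlewood--Offord inequality of Theorem \ref{thm:linearLO} is needed. Write $A=A(W,p)$, let $A_m$ be its upper-left $m\times m$ submatrix, and pass from $A_m$ to $A_{m+1}$ by appending a fresh random column $x=(w_{i,m+1}\xi_{i,m+1})_{i\le m}$ and then a fresh random row $y=(w_{m+1,j}\xi_{m+1,j})_{j\le m+1}$; because $W$ is not assumed symmetric, all $2m+1$ new entries are independent and may be revealed in this order. Associate to $A_m$ the bipartite graph $B_m$ on rows versus columns with an edge for each nonzero entry, so $(B_m)$ is a two-sided vertex exposure of $B(n,n,p)$; set $r_m=\operatorname{corank}(A_m)$, note $|r_{m+1}-r_m|\le1$ always, and aim to show $r_n=0$ with probability $1-O((\ln\ln n)^{-1/2})$.

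Fix $\delta\in(\tfrac1{1+\epsilon},1)$ and put $n'=\delta n$. First I would prove the non-symmetric analogue of Lemma \ref{lemma:nearfullrank}: up to a union bound over which $\eta n'$ columns of $A_{n'}$ are omitted, the event that those columns lie in the span of the remaining ones forces every entry of the corresponding $\eta n'\times\eta n'$ block, and these entries are now genuinely independent, each equal to its forced value with probability at most $1-p$, giving $\P(\operatorname{rank}(A_{n'})<(1-\eta)n')=o(e^{-\gamma n\ln n})$. Next set $k=\tfrac{\ln\ln n}{2p}$ and call $A_m$ \emph{good} if every nonempty set of at most $k$ rows, and every nonempty set of at most $k$ columns, is \emph{nice} --- that is, has at least two private-neighbour vertices on the other side (a private neighbour of a set $S$ being a vertex adjacent to exactly one vertex of $S$) --- and if $B_m$ has minimum degree larger than $\ln\ln n$. (This is the $s=1$ bipartite specialisation of Definition \ref{def:goodgraph}: the exceptional ``contains a non-expanding set of size $\le s-1$'' clauses are now vacuous, and since $p>\tfrac{(1+\epsilon)\ln n}{n}$ lies above the connectivity threshold the minimum-degree requirement --- equivalently conditions W1, W2 --- holds with room to spare.) I would show $\P(\text{some }A_m,\ n'\le m\le n,\text{ is not good})=n^{-\Omega(1)}$ by the union-bound estimates in the proof of Lemma \ref{lemma:goodmatrices} specialised to one side of the bipartite graph; the key point is that $np>(1+\epsilon)\ln n>\ln n$, which makes the relevant exponent $4\ln(np)-\delta np(\ln n)^{-1/2+o(1)}$ negative, and the union over the exposure window $m$ costs no extra factor since each such event is anti-monotone in $m$ (the random entries of $A$ are fixed; only the window grows).

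The heart is the augmentation step for a good $A_m$. If $r_m\ge1$, pick deterministically a nonzero left null vector $u$ of $A_m$; its support has no private-neighbour column (else that coordinate of $u$ would be forced to $0$), so by goodness $|\operatorname{supp}u|>k$, and appending $x$ raises the rank unless $u^\top x=0$, of probability $O((|\operatorname{supp}u|\,p)^{-1/2})=O((kp)^{-1/2})=O((\ln\ln n)^{-1/2})$ by Theorem \ref{thm:linearLO} with $\rho=p$. Conditioned on that success, $x\notin\operatorname{colspace}(A_m)$, so every right null vector of $[A_m\mid x]$ has last coordinate $0$ and restricts to a right null vector $v$ of $A_m$ with $|\operatorname{supp}v|>k$; appending $y$ then raises the rank again unless $\sum_{j\le m}y_jv_j=0$, again of probability $O((kp)^{-1/2})$, so on both successes $r_{m+1}=r_m-1$. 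If instead $r_m=0$, I would reveal $x$ first, write $z=A_m^{-1}x$, and use $\det A_{m+1}=\pm\det(A_m)(y_{m+1}-z^\top y')$, which is a linear form in the fresh row $y$. Running the argument of Lemma \ref{lemma:nonsingaug} with $s=1$: the $i$-th row of $A_m^{-1}$ is (up to scaling) the unique dependency among the rows of $A_m$ with column $i$ deleted, and goodness forces its support to have size $>k$ for \emph{every} column $i$ (a ``bottleneck'' column of Lemma \ref{lemma:nearniceness} would be adjacent to a row of degree at most $1$, of which a good graph has none). Hence each $z_i\ne0$ with probability $1-O((kp)^{-1/2})$, Markov gives $\P(|\{i:z_i\ne0\}|\le k)=O((kp)^{-1/2})$, and Theorem \ref{thm:linearLO} then yields $\P(\det A_{m+1}=0)=O((\ln\ln n)^{-1/2})$.

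Finally I would close the argument as in the proof of Theorem \ref{thm:mainresult}: put $X_m=4^{r_m}$ when $r_m>0$ and every $A_j$ with $n'\le j\le m$ is good, and $X_m=0$ otherwise. Because a good graph has no vertex of degree $\le 1$, the quantity ``$Z_m$'' of that proof vanishes identically, and the estimates above give $\E(X_{m+1}\mid\CQ_m)<\tfrac12 X_m+O((\ln\ln n)^{-1/2})$ for every admissible history (the $r_m=0$ case contributes only the error term, since then $X_m=0$). Iterating this down from $m=n$ and using $X_{n'}\le4^{\eta n'}$ on the event of the first step --- with $\eta$ chosen so small that $4^{\eta\delta}<2^{1-\delta}$ --- gives $\E(X_n\mid A_{n'})=O((\ln\ln n)^{-1/2})$; by Markov, and since $X_n\ge4$ on $\{A_n\text{ singular}\}\cap\{\text{all good}\}$, the $o(1)$ failure bounds established above then yield the theorem. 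The main work is the structural content of the second paragraph --- redoing the ``goodness'' estimates for the bipartite model and handling the exposure window; the augmentation dichotomy, conceptually the crux, is actually \emph{easier} here than in the symmetric theorems, because $p$ sits above the connectivity threshold (so minimum degrees are large, well-separation is automatic, and there are no bottleneck columns) and because only the linear Littlewood--Offord inequality is invoked, giving the final bound $(\ln\ln n)^{-1/2}$ rather than the $(\ln\ln n)^{-1/4}$ of Theorems \ref{thm:altexpression} and \ref{thm:betterexpression}.
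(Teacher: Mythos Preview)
The paper does not actually prove this theorem in detail --- it only asserts that the result follows ``via a similar argument to the one in this paper''.  Your proposal carries out exactly that adaptation and is essentially correct.  The two structural simplifications you isolate are the right ones: first, because the fresh row $y$ and column $x$ are independent, the nonsingular-augmentation step becomes a \emph{linear} Littlewood--Offord problem (condition on $x$, then the Schur complement $y_{m+1}-z^\top y'$ is linear in $y$), which is why the bound is $(\ln\ln n)^{-1/2}$ rather than the $(\ln\ln n)^{-1/4}$ coming from Lemma~\ref{lemma:modifiedquadraticL-O}; second, for $s=1$ with $p$ above the connectivity threshold, the minimum-degree requirement already forces conditions W1 and W2, the sets $T$, $N(T)$ of Section~8 are empty, and there are no ``bottleneck'' columns in the sense of the paragraph following Lemma~\ref{lemma:nearniceness}, so every row of $A_m^{-1}$ has support of size exceeding $k$.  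Your Markov step from ``each $z_i\neq 0$ with probability $1-O((kp)^{-1/2})$'' to ``$|\{i:z_i\neq 0\}|>k$ with probability $1-O((kp)^{-1/2})$'' is also fine, since $k=o(m)$.

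One small imprecision: the goodness-failure events are not literally anti-monotone in $m$ as you claim, because newly exposed rows or columns can themselves form non-nice sets (or have low degree at the moment of exposure).  What \emph{is} true is that for a fixed set $S$ the event ``$S$ is not nice in $B_m$'' is anti-monotone in $m\ge\max S$, so it suffices to check each $S\subset[n]$ at its birth time $m_0(S)=\max(n',\max S)\ge n'$; the union over $S\subset[n]$ rather than $S\subset[n']$ costs only a constant$^{|S|}$ factor, and the minimum-degree check for newly added rows and columns is a straightforward union bound over $m$.  Either way the extra cost is absorbed by the polynomial slack in the Case~1 and Case~2b estimates of Lemma~\ref{lemma:goodmatrices}.
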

It seems more difficult to obtain a workable extension of the full statement of Theorem \ref{thm:betterexpression} to non-symmetric models.  For example, suppose we were to look at a matrix $A$ whose entries were 1 with probability $\frac{0.8 \ln n}{n}$ and 0 otherwise.  With high probability this matrix will have rows and columns that are entirely 0, and furthermore the number of nonzero rows and columns of $A$ will likely not be equal.  If $A$ has more nonzero rows than columns, then that would imply some sort of dependency among the nonzero rows of $A$.  However, there does not seem to be any obvious way to describe this dependency.

The assumption in Corollary \ref{cor:randomw} that the entries of $A$ have equal probabilities of being 0 seems quite artifical, and it would be of interest to replace that assumption by the assumption that no entry is too likely to take on the value 0.  More generally, we have the following conjecture
\begin{conjecture} Let $A$ be a random symmstric matrix whose above-diagonal entries $a_{ij}$ are independent, random variables satisfying
 \begin{equation*}
  \sup_{i, j} \sup_{c \in \C} \P(a_{ij}=c) \leq 1-\frac{(1+\epsilon) \ln n}{n}.
 \end{equation*}
Then $A$ is almost surely non-singular.
\end{conjecture}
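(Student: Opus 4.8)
The plan is to follow the blueprint of the $s=1$ case of Theorems \ref{thm:mainresult} and \ref{thm:betterexpression} --- which already gives the nonsingularity of $Q(W,p)$ for $p>\frac{(1+\epsilon)\ln n}{n}$ --- and to check that every place where the Bernoulli structure of $Q(W,p)$ was used survives the passage to general concentrated entries. Write $\rho_{ij}:=1-\sup_{c\in\C}\P(a_{ij}=c)$, so that by hypothesis $\rho_{ij}\ge\rho_0:=\frac{(1+\epsilon)\ln n}{n}$ for every pair; in particular $\P(a_{ij}=0)\le 1-\rho_0$, so the $0$/nonzero pattern graph $G=G(A)$ has every edge present independently with probability at least $\rho_0$. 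Expose $A$ minor by minor as in Section 3. Since $s=1$, the relevant non-expanding subsets have size at most $s-1=0$ and hence are empty, so $U_m=m$ and the quantity $Y_m$ of Section 3 is just the co-rank of $Q_m$; running the $X_m=4^{Y_m}$ estimate of Lemma \ref{lemma:expectationofX} then reduces the conjecture to the analogues, in the present setting, of Lemmas \ref{lemma:nearfullrank}, \ref{lemma:goodmatrices}, \ref{lemma:singaug} and \ref{lemma:nonsingaug}.

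Two of these four ingredients transfer with no real change. In the proof of Lemma \ref{lemma:nearfullrank} the only use of randomness is that a prescribed off-diagonal entry of the block $C$ equals a forced value with probability at most $1-p$; here that probability is at most $1-\rho_0$, so the same computation (with $p$ replaced by $\rho_0$) gives $\P(\rank(Q_{n'})<(1-\epsilon)n')=o(e^{-\gamma n\ln n})$. For Lemmas \ref{lemma:singaug} and \ref{lemma:nonsingaug}, the point is that the tools we rely on --- Hal\'asz's Theorem \ref{thm:linearLO} and the quadratic Lemma \ref{lemma:modifiedquadraticL-O} --- are already stated for arbitrary independent variables with concentration parameter $\rho$. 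Applying them with $\rho=\rho_0$ to the freshly exposed column (after scaling its coordinates by the nonzero entries of a null vector, which does not decrease $\rho$) again bounds the relevant failure probabilities by $O((D\rho_0)^{-1/2})$ and $O((q\rho_0)^{-1/4})$. Thus, provided one can still force the support $D$ of every null vector to satisfy $D\gg n/\ln n$ (so that $D\rho_0\to\infty$), the amplification of Section 3 goes through.

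What is left --- and this is the heart of the matter --- is a replacement for the structural Lemma \ref{lemma:goodmatrices}, which is what forces every candidate minimal dependent set, equivalently every null vector's support, to be large. If $G$ is \emph{sparse}, say under the extra hypothesis $\P(a_{ij}=0)\ge 1-n^{-\alpha}$ for some fixed $\alpha\in(0,1)$, then each edge of $G$ is present with probability in $[\rho_0,n^{-\alpha}]$ and the whole apparatus survives: the Chernoff/union-bound estimate behind \eqref{eqn:lowdegree} (which uses only the lower bound $\rho_0$) shows that with probability $1-o(1)$ every $G_m$, $n'\le m\le n$, has minimum degree exceeding $\ln\ln n$ and, as in Lemma \ref{lemma:separation}, is well separated; the local sparsity estimate of Lemma \ref{lemma:sparseness} holds since each edge probability is at most $n^{-\alpha}$; and the two cases in the proof of Lemma \ref{lemma:goodmatrices}, with $p$ replaced pointwise by the $\rho_{ij}$, then show that every vertex set of size between $1$ and $k=\frac{\ln\ln n}{2\rho_0}$ is nearly nice, which as in Lemma \ref{lemma:singaug} forces $D>k\gg n/\ln n$.

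The real obstacle is the complementary regime, in which $\P(a_{ij}=0)$ is bounded away from $1$ on some pairs, so that $G$ is locally dense. There the ``nearly nice'' mechanism collapses --- for instance if $G$ contains a clique on a constant number of vertices then no vertex set of size at least $3$ inside it is nearly nice --- and graph structure alone no longer bounds the support of a null vector from below; one is instead in the regime of \cite{CTV}, where a different (dense) argument is needed. Consequently a full proof seems to require a single argument interpolating between the sparse analysis of this paper and the dense analysis of \cite{CTV}, uniformly over all density profiles $\rho_{ij}\in[\rho_0,1]$ --- for example by partitioning the index set into a ``sparse part'' and a ``dense part'', treating each block by the appropriate method, and controlling the off-block interaction during the minor-by-minor exposure. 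Making this interaction step work is the point we expect to be genuinely hard.
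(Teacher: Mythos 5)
The statement you were asked to prove is a \emph{conjecture} in the paper: the authors explicitly do not prove it, and the remarks immediately following it say that the methods of \cite{CTV} give the result only with $\frac{(1+\epsilon)\ln n}{n}$ replaced by $n^{-1/2+\epsilon}$, while the methods of this paper and \cite{CV} cover the case where the entries concentrate on the specific value $0$; the authors then observe there is ``no natural reason why the singularity probability should be less when the variables concentrate on 0 as opposed to some other value,'' which is precisely the gap. Your write-up correctly recognizes this and, rather than pretending to close the gap, gives an honest map of which ingredients transfer and where the obstruction lies. That map is accurate and essentially matches the paper's own commentary: Hal\'asz-type anticoncentration (Theorem \ref{thm:linearLO} and Lemma \ref{lemma:modifiedquadraticL-O}) and the exponential rank-deficiency bound of Lemma \ref{lemma:nearfullrank} transfer painlessly with $\rho_0$ in place of $p$, but the structural Lemma \ref{lemma:goodmatrices} — which forces null-vector supports to be large — relies on the zero/nonzero pattern graph $G(A)$ being sparse, and when the entries concentrate on a \emph{nonzero} value the graph is dense and the ``nearly nice'' mechanism collapses (your clique example makes this concrete). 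Pinpointing that as the heart of the problem, and noting that one would need an argument interpolating between this paper's sparse analysis and \cite{CTV}'s dense analysis over a heterogeneous density profile $\rho_{ij}$, is exactly the right diagnosis.

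One small caveat on the ``sparse'' sub-case you claim goes through: you assume $\rho_0\le\P(a_{ij}\neq 0)\le n^{-\alpha}$ with probabilities varying across pairs, which is already a bit beyond what the paper proves — Corollary \ref{cor:randomw} requires $\P(\xi_{ij}=0)=1-p$ exactly on the off-diagonal — and the graph lemmas would need re-verification for non-uniform edge probabilities. In particular, the reduction in Lemma \ref{lemma:separation} to $p\le(\ln n)^2/n$ (discarding the dense range because all vertices then have large degree) does not port directly when the $\rho_{ij}$ are heterogeneous, since some vertices could have small degree even if other edges are dense. This does not affect your main conclusion — the conjecture is open, the hard regime is when $\P(a_{ij}=0)$ is bounded away from $1$ — but it means even the ``easy half'' requires more than routine replacement of $p$ by $\rho_0$.
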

The methods of \cite{CTV} give that this conjecture is true if $\frac{(1+\epsilon) \ln n}{n}$ is replaced by $n^{-1/2+\epsilon}$.  Conversely, the methods in this paper and \cite{CV} cover the case where the entries are allowed to concentrate on the particular value of 0 with high (equal) probability.  However, there seems to be no natural reason why the singularity probability should be less when the variables concentrate on 0 as opposed to some other value.

Alternatively, we can consider the situation where the matrix $W$ which is being sparsified already has certain entries set equal to 0.  Here the intutition is that if $W$ does not have too many entries already set equal to 0, then the results of this paper should also hold for the sparsified matrix.  This leads to the following conjecture
\begin{conjecture}
 Let $W$ be a symmetric matrix containing at least $m$ nonzero entries in each row.  Then if $\frac{(1+\epsilon) \ln n}{m}<p<\frac{1}{2}$, then $Q(W,p)$ is almost surely nonsingular.
\end{conjecture}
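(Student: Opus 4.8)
The plan is to rerun the $s=1$ case of the machinery of this paper (equivalently of \cite{CV}) with the complete graph replaced by the support graph $G:=G(W)$, where $i\sim j$ in $G$ iff $w_{ij}\neq 0$, and with the quantity $m$ playing the role previously played by $n-1$. Note that $p<1/2$ together with $p>(1+\ep)\ln n/m$ forces $m>2(1+\ep)\ln n$, so every degree threshold appearing in the proofs still tends to infinity; the random graph driving $Q(W,p)$ is now $G\cap G(n,p)$. A preliminary reduction: the connected components of $G$ induce a block-diagonal decomposition of $Q(W,p)$, each block being $Q(W|_H,p)$ for a component $H$ with $\norm{H}\ge m+1$ and minimum degree again at least $m$; since $Q(W,p)$ is nonsingular iff every block is, it suffices to treat connected $G$ -- provided the resulting union bound over components can be afforded, which is one of the two obstacles discussed below.

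Assuming $G$ connected with minimum degree $\ge m$, the first step is to check that the $s=1$ analogues of Lemmas \ref{lemma:separation}--\ref{lemma:goodmatrices} survive. The only way the host graph's density entered those proofs is through the bound on the probability that a fixed vertex of $G\cap G(n,p)$ has degree at most $\ln\ln n$: here that probability is $\sum_{i=0}^{\ln\ln n}\binom{m}{i}p^i(1-p)^{m-i}$, which using $mp>(1+\ep)\ln n$ is $n^{-(1+\ep)+o(1)}$ (the sum is dominated by its last term, whose binomial factor is $n^{o(1)}$, when $mp\le\ln^2 n$, and is super-polynomially small by a Chernoff bound otherwise). Every other estimate there -- the probability that a fixed set of bounded size is connected, or carries a short cycle, in $G\cap G(n,p)$ -- is an upper bound in $p$ alone and is unchanged. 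In particular $\P(\deg=0)\le(1-p)^m\le n^{-(1+\ep)}$ per vertex, so almost surely $G\cap G(n,p)$ has minimum degree $\ge 1$, and with probability $1-o(1)$ all the graphs $G_{m'}$, $n'\le m'\le n$, are good in the sense of Definition \ref{def:goodgraph} with $s=1$; the potential-function argument of Section 3 then applies with $U_{m'}\equiv m'$, so $Y_{m'}$ is simply the corank of $Q_{m'}$ and the goal is $Y_n=0$. One must also rerun Lemma \ref{lemma:nearfullrank}; see below for why this too requires care.

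The crux, and the main obstacle, is the $s=1$ analogue of Lemma \ref{lemma:singaug}. If $Q_{m'}$ is good and singular with null vector $v$, then, exactly as there, $\norm{\mathrm{supp}(v)}\le k:=\ln\ln n/(2p)$ would force $\mathrm{supp}(v)$ to be nearly nice and hence produce a row whose product with $v$ is a single nonzero coordinate, a contradiction; so $\norm{\mathrm{supp}(v)}>k$. Adjoining the next column, indexed by a new vertex $j$, increases the rank by $2$ unless $\sum_i v_i w_{ij}\xi_{ij}=0$, whose genuinely random terms are precisely those with $i\in\mathrm{supp}(v)\cap N_G(j)$; to finish via Theorem \ref{thm:linearLO} with $\rho=p$ one needs $\norm{\mathrm{supp}(v)\cap N_G(j)}\to\infty$. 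When $G$ is dense this is automatic, but for a general min-degree-$m$ host -- think of $G$ a long chain of glued copies of $K_{m+1}$, or a disjoint union of them -- the support of a null vector can hide a bounded number of its vertices in each $W$-neighborhood, so $\norm{\mathrm{supp}(v)\cap N_G(j)}$ stays bounded and Theorem \ref{thm:linearLO} yields only a constant, not a vanishing, bound, which is too weak for the $\tfrac{3}{5}$-contraction of Section 3. One would try to exclude this by exposing the vertices in a uniformly random order and combining an a.s.\ bound on the maximum degree of $G\cap G(n,p)$ (which is $O(np)$, as $np\ge mp>\ln n$) with the average $\tfrac1n\sum_{i\in\mathrm{supp}(v)}\deg_G(i)\ge m\norm{\mathrm{supp}(v)}/n\ge mk/n$ of $\norm{\mathrm{supp}(v)\cap N_G(j)}$ over $j$; but $mk/n$ need not diverge (e.g.\ when $m=\Theta(\ln n)$ and $p=\Theta(1)$), so a finer combinatorial dichotomy is needed -- showing that a near-nice-failure set of size $>k$ in a min-degree-$m$ graph is either much larger than $k$ or confined to a small number of neighborhoods already excluded by goodness. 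Finally, even granting such an argument for connected $G$ (which would give singularity probability $O((\ln\ln n)^{-1/2})$, together with a merely polynomial replacement for Lemma \ref{lemma:nearfullrank}, the dense-corner argument of its proof also breaking for clustered $G$), the union bound over the up to $n/(m+1)$ components of a disconnected $G$ demands a singularity probability that is $o((m+1)/n)$, i.e.\ polynomially small -- beyond every Littlewood--Offord-based method available in this range, and the second, arguably deeper, obstacle.
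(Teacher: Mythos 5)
This is stated in the paper as an open \emph{conjecture}, not a theorem; the paper offers no proof, so there is nothing to compare your attempt against. Your write-up is appropriately honest: rather than claiming a proof, you outline the natural approach (rerun the $s=1$ machinery with the support graph $G(W)$ in place of $K_n$ and $m$ in place of $n-1$) and then identify where it breaks down. Both obstacles you name are genuine. The Littlewood--Offord step in Lemma \ref{lemma:singaug} really does need the null vector's support to meet $N_G(j)$ in a number of coordinates tending to infinity, and the min-degree-$m$ hypothesis alone does not force this: with $k=\ln\ln n/(2p)$ and $p>(1+\epsilon)\ln n/m$ one has $k=o(m)$, so a support of size just over $k$ can sit inside very few $G(W)$-neighborhoods, and Theorem \ref{thm:linearLO} then returns only an $O(1)$ bound, which is useless for the $\tfrac35$-contraction in Section~3. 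And the block decomposition over components of $G(W)$ does force a union bound over up to $n/(m+1)$ independent $(m+1)\times(m+1)$ sub-problems, which would require a per-block singularity probability that is $o(m/n)$ -- polynomially small -- whereas the Hal\'asz/quadratic Littlewood--Offord machinery in this paper and in \cite{CV} delivers only $O((\ln\ln n)^{-1/4})$-type bounds. In short, you have correctly recognized the statement as open and given a sound account of why the present method does not close it.

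One small refinement worth recording: the first obstacle is arguably the softer of the two, since the ``goodness'' machinery (non-nice sets of size between $s$ and $k$ being rare) could in principle be localized to $G(W)$-neighborhoods, and for highly structured hosts such as the glued-$K_{m+1}$ chain the exposure order can be chosen to respect the blocks. The second obstacle -- needing polynomially small failure probability per component -- is the one that seems to require genuinely new input beyond Littlewood--Offord, and is likely the reason the authors left this as a conjecture rather than pushing the coupling argument they allude to in the surrounding discussion.
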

This conjecture and a short coupling argument would be sufficient to handle the case of matrices whose entries are 0 with nonequal probability.

To motivate our final conjecture, we again turn to the linkage between expansion of a graph $G$ and dependencies in its adjacency matrix $A(G)$.  One way of thinking of our theorems is that, even for our very sparse graphs, non-expansion is the only source of dependency in the adjacency matrix.  If every subset of a set $S$ of rows expands well, then $S$ will be independent.  Although random regular graphs are very sparse, they also typically will expand very well.  Is it then the case that the adjacency matrix of a random $d-$regular graph will almost surely be nonsingular?

This is not the case for $d=2$ (as the graph will almost surely contain a $4k$-cycle for some $k$, which implies a singular adjacency matrix), but it seems likely to be true for larger $d$.
\begin{conjecture}
For any fixed $d>2$, the adjacency matrix of a random $d-$regular graph is almost surely non-singular.
\end{conjecture}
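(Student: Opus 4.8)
The plan is to transport the organizing principle of this paper — dependency comes from a small non-expanding configuration — to the adjacency matrix $A(G)$, while confronting the two ways in which $A(G)$ genuinely differs from $Q(W,p)$. First, for $d\ge 3$ a random $d$-regular graph is an expander and has \emph{no} non-expanding subsets; were its nonzero entries generic, Theorem~\ref{thm:betterexpression} would already give nonsingularity. But the entries of $A(G)$ are all $1$, not generic, so the arithmetic obstructions that genericity of $W$ suppressed — short even cycles and the like, whose $0/1$ incidence matrices are singular over $\R$ — are present (indeed a $4$-cycle appears with constant probability), and the proof must show that, although such ``resonators'' exist, they almost surely do not \emph{extend} to a global kernel vector. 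Second, the anti-concentration in Lemmas~\ref{lemma:singaug}--\ref{lemma:nonsingaug} used randomness in the \emph{entries}; here that randomness must be supplied instead by \emph{switchings} in the configuration model. (Note also that $d/n$ lies far below the $\ln n/n$ threshold governing this paper, so only the methods, not the quantitative estimates, carry over.)

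Concretely I would generate $G$ by the configuration model conditioned on simplicity — for fixed $d$ this costs only a constant factor, so it suffices to work in the pairing model — and reveal the pairing one edge-bundle at a time, the analogue of the minor-by-minor exposure used here, with an as-yet-unrevealed switching playing the role of a random entry. A preliminary step records the graph-theoretic facts that hold with high probability: strong vertex expansion, $|N(S)|\ge(1+\ep)|S|$ for all $|S|\le\alpha n$ with $\alpha,\ep$ depending on $d$; and local sparseness in the sense of Lemma~\ref{lemma:sparseness}, that every set of $o(n)$ vertices spans at most $(1+o(1))$ as many edges as it has vertices.

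Now let $v\in\ker A(G)$ with $\mathrm{supp}(v)=S$; since the rows of $A(G)$ have constant sum $d$, the vector $v$ is automatically orthogonal to $\mathbf 1$, and the equations $\sum_{j\in S,\,j\sim i}v_j=0$ express that $v$ restricted to $S$ lies in the kernel of the $0/1$ incidence matrix $M(N(S),S)$. Taking $v$ of minimal support and passing to a minimal column dependency shows every vertex with a neighbour in $S$ in fact has at least two, so the induced subgraph on $S\cup N(S)$ has minimum degree at least $2$ on the $N(S)$-side. For $|S|=O(1)$ this pins down $S$ up to a finite list of resonator shapes, and one rules out each by exposing the edges in a bounded neighbourhood and checking the extension fails except with probability $o(1)$ per copy, then summing over the $O(1)$ expected number of copies. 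For larger $|S|$ one combines local sparseness, expansion and the minimum-degree-$2$ structure to write the relevant minor's determinant by a cofactor expansion exactly as in the proof of Lemma~\ref{lemma:nonsingaug}, and applies the Quadratic Littlewood--Offord estimate (Lemma~\ref{lemma:modifiedquadraticL-O}) — now to the random outcomes of switchings rather than to random entries — to conclude that this determinant is nonzero with probability $1-o(1)$; the genuinely large range $|S|\ge\alpha n$ is disposed of as in Lemma~\ref{lemma:nearfullrank}, by revealing a constant fraction of the pairing last. Putting the ranges together gives $\P(\det A(G)=0)\to 0$.

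I expect the resonator step to be the crux, together with the mechanics of running Littlewood--Offord-type anti-concentration through switchings: for $Q(W,p)$ nonsingularity reduced, given generic weights, to a purely graph-theoretic expansion statement, whereas for $A(G)$ it does not, and one must both control the arithmetic resonators and turn switchings into something sufficiently like independent Bernoulli variables for Lemma~\ref{lemma:modifiedquadraticL-O} to apply, all while respecting the rigid $d$-regularity constraint. It is exactly here that $d=2$ must fail: then $G$ is a disjoint union of cycles, a $4k$-cycle occurs almost surely, and there are no extra incident edges to carry the destroying randomness — whereas for $d\ge 3$ every would-be resonator does carry such edges.
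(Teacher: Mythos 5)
This statement is a \emph{conjecture} in the paper, not a theorem; the authors offer only the heuristic that $d$-regular graphs expand and therefore ``ought'' to have nonsingular adjacency matrices, together with the observation that $d=2$ fails via $4k$-cycles. There is no proof in the paper for you to be compared against, so what you have written has to stand or fall on its own.

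Read as a proof, the proposal has genuine gaps that you have (to your credit) flagged but not closed, and they are exactly the hard parts. First, every anti-concentration step in this paper --- Theorem~\ref{thm:linearLO} and Lemma~\ref{lemma:modifiedquadraticL-O} --- requires \emph{independent} random variables $z_i$ with bounded concentration; in $Q(W,p)$ these are supplied by the independent Bernoulli entries of the augmenting column. In the pairing model there is no such independence: a single switching touches two edges, switchings chain together, and the residual randomness after conditioning on a partial pairing is a complicated dependent object. Saying one should ``turn switchings into something sufficiently like independent Bernoulli variables for Lemma~\ref{lemma:modifiedquadraticL-O} to apply'' names the obstacle rather than overcoming it, and the analogous use of Lemma~\ref{lemma:nearfullrank} for $|S|\ge\alpha n$ has the same problem, since that proof bounds the probability of a forced value entrywise using independence. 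Second, the ``resonator'' step is not actually a small finite case-check: a $4$-cycle (or any even cycle, or a pair of vertices with identical neighbourhoods, etc.) yields a vector in the kernel of a local minor, and to rule out its extension to $\ker A(G)$ you must control the balance condition $\sum_{j\sim w}v_j=0$ for \emph{every} $w$, which couples the local configuration to the rest of the pairing; one must also be careful about the expected number of such local configurations versus the per-copy failure probability, and your sketch does not supply the union-bound bookkeeping. Finally, it is worth noting explicitly that $d/n=\Theta(1/n)$ is far below the $\ln n/n$ threshold this paper operates at, so none of the paper's quantitative lemmas are directly invocable; what you are proposing is a new argument in the \emph{spirit} of the paper, not an application of it. As a research programme this is a sensible reading of the authors' own intuition, but as written it does not constitute a proof.
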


\end{document}